\theoremstyle{plain}
\newtheorem{theorem}{Theorem}[section]
\newtheorem{proposition}[theorem]{Proposition}
\newtheorem{lemma}[theorem]{Lemma}
\newtheorem{corollary}[theorem]{Corollary}
\newtheorem{open}[theorem]{Open Problem}
\theoremstyle{definition}
\theoremstyle{remark}
\newcommand{\ii}{{\ensuremath{\mathrm{i}}}}
\newcommand{\tp}{{\scriptscriptstyle\mathsf{T}}}
\newcommand{\h}{{\scriptscriptstyle\mathsf{H}}}
\newcommand{\m}{{\scriptscriptstyle-}}
\newcommand{\p}{{\scriptscriptstyle+}}
\newcommand*{\medcup}{\mathbin{\scalebox{1.2}{\ensuremath{\cup}}}}%
\let\O\undefine
\DeclareMathOperator{\O}{O}
\DeclareMathOperator{\U}{U}
\DeclareMathOperator{\SO}{SO}
\DeclareMathOperator{\SU}{SU}
\DeclareMathOperator{\Sp}{Sp}
\DeclareMathOperator{\Spin}{Spin}
\DeclareMathOperator{\SL}{SL}
\DeclareMathOperator{\GL}{GL}
\DeclareMathOperator{\Gr}{Gr}
\DeclareMathOperator{\Flag}{Flag}
\DeclareMathOperator{\tr}{tr}
\DeclareMathOperator{\diag}{diag}
\DeclareMathOperator{\spn}{span}
\DeclareMathOperator{\Orb}{Orb}
\DeclareMathOperator{\Stab}{Stab}
\newenvironment{breakablealgorithm}
{
    \vspace{15pt}
	\begin{center}
		\refstepcounter{algorithm}
		\hrule height.8pt depth0pt \kern2pt
		\renewcommand{\caption}[2][\relax]{
			{\raggedright\textbf{\ALG@name~\thealgorithm} ##2\par}%
			\ifx\relax##1\relax 
			\addcontentsline{loa}{algorithm}{\protect\numberline{\thealgorithm}##2}%
			\else 
			\addcontentsline{loa}{algorithm}{\protect\numberline{\thealgorithm}##1}%
			\fi
			\kern2pt\hrule\kern2pt
		}
	}{
		\kern2pt\hrule\relax
	\end{center}
    \vspace{15pt}
}
\begin{document}
\title[$\SO(n)$, $\SU(n)$, and $\Sp(2n, \mathbb{F})$ as products of Grassmannians]{Special orthogonal, special unitary, and symplectic groups as products of Grassmannians}
\author[L.-H.~Lim]{Lek-Heng~Lim}
\address{Computational and Applied Mathematics Initiative, University of Chicago, Chicago, IL 60637}
\email{lekheng@uchicago.edu}
\author[X.~Lu]{Xiang Lu}
\address{Department of Statistics, University of Chicago, Chicago, IL 60637}
\email{xiangl1@uchicago.edu}
\author[K.~Ye]{Ke~Ye}
\address{KLMM, Academy of Mathematics and Systems Science, Chinese Academy of Sciences,
Beijing 100190, China}
\email{keyk@amss.ac.cn}

\begin{abstract}
We describe a  curious structure of  the special orthogonal, special unitary, and symplectic groups that has not been observed, namely, they can be expressed as matrix products of their corresponding Grassmannians realized as involution matrices.
We will show that $\SO(n)$ is a product of two real Grassmannians, $\SU(n)$ a product of four complex Grassmannians, and $\Sp(2n, \mathbb{R})$ or $\Sp(2n, \mathbb{C})$ a product of four symplectic Grassmannians over $\mathbb{R}$ or $\mathbb{C}$ respectively. Our proofs essentially require only standard matrix theory.
\end{abstract}

\keywords{Grassmannian, orthogonal matrix, unitary matrix, symplectic matrix}
\subjclass{14M15, 15A23,  22E10, 22E15, 51A50}
\maketitle

\section{Introduction}\label{sec:intro}

By definition, the real, complex, and symplectic Grassmannians are respectively
\begin{equation}\label{eq:abstract}
\begin{aligned}
\mathbb{G}(k, \mathbb{R}^n) &= \{k\text{-dimensional real subspaces in }\mathbb{R}^n \},  \\
\mathbb{G}(k, \mathbb{C}^n) &= \{k\text{-dimensional complex subspaces in }\mathbb{C}^n \},  \\
\mathbb{G}_{\Sp}(2k, \mathbb{F}^{2n}) &= \{2k\text{-dimensional symplectic subspaces in }\mathbb{F}^{2n} \},
\end{aligned}
\end{equation}
where $\mathbb{F} = \mathbb{R}$ or $\mathbb{C}$ here and throughout.
These are real smooth manifolds and real affine varieties but the descriptions in \eqref{eq:abstract} are abstract and coordinate-free. To do almost anything with these Grassmannians, one needs a more concrete characterization of these objects. For example, a common approach is to characterize them as homogeneous spaces:
\begin{equation}\label{eq:homo}
\begin{aligned}
\mathbb{G}(k, \mathbb{R}^n) &\cong \O(n)/\bigl( \O(k) \times \O(n-k) \bigr),\\
\mathbb{G}(k, \mathbb{C}^n) &\cong \U(n)/\bigl( \U(k) \times \U(n-k) \bigr),\\
\mathbb{G}_{\Sp}(2k, \mathbb{F}^{2n}) &\cong \Sp(2n, \mathbb{F})/\bigl( \Sp(2k, \mathbb{F}) \times \Sp(2n-2k, \mathbb{F}) \bigr).
\end{aligned}
\end{equation}
In our recent works \cite{ZLK20,ZLK24curvaure,LK24degree,ZLK24,ZLK24b}, largely motivated by computational considerations, we characterized them as submanifolds of matrices:
\begin{equation}\label{eq:invo}
\begin{aligned}
\Gr(k, \mathbb{R}^{n}) &\coloneqq \{ X \in \O(n) : X^2 = I_n, \; \tr(X) = 2k-n \},\\
\Gr(k, \mathbb{C}^{n})  &\coloneqq \{ X \in \U(n) : X^2 = I_n, \; \tr(X) = 2k-n \},\\
\Gr_{\Sp}(2k, \mathbb{F}^{2n}) &\coloneqq  \{ X \in \Sp(2n,\mathbb{F}): X^2 = I_{2n},\; \tr(X) = 4k - 2n \}.
\end{aligned}
\end{equation}
These are submanifolds of $\O(n)$, $\U(n)$, $\Sp(2n,\mathbb{F})$ respectively and are isomorphic to their abstract counterparts in \eqref{eq:abstract} or homogeneous space counterparts in \eqref{eq:homo}, whether as smooth manifolds or as real affine varieties. We call them the \emph{involution models} for the  real, complex, and symplectic Grassmannians respectively. Explicit formulas for the isomorphisms in \eqref{eq:homo} and \eqref{eq:invo} may be found in \cite{ZLK20,ZLK24}. 

The goal of our article is to state and prove a somewhat curious structure. For any two subsets of $n \times n$ matrices $S_1$ and $S_2$, write $S_1 \cdot S_2 = \{X_1 X_2 : X_1 \in S_1$, $X_2 \in S_2 \}$. Then
\begin{equation}\label{eq:main}
\begin{aligned}
\SO(n) &= \Gr(\lfloor \tfrac{n}{2} \rfloor, \mathbb{R}^n) \cdot \Gr(\lfloor \tfrac{n}{2} \rfloor,\mathbb{R}^n), \\
\SU(n) &= \Gr(\lfloor \tfrac{n}{2} \rfloor, \mathbb{C}^n) \cdot \Gr(\lfloor \tfrac{n}{2} \rfloor,\mathbb{C}^n) \cdot \Gr(\lfloor \tfrac{n}{2} \rfloor, \mathbb{C}^n) \cdot \Gr(\lfloor \tfrac{n}{2} \rfloor,\mathbb{C}^n), \\
\Sp(2n,\mathbb{F}) &= \Gr_{\Sp}(2\lfloor \tfrac{n}{2} \rfloor, \mathbb{F}^{2n}) \cdot \Gr_{\Sp}(2\lfloor \tfrac{n}{2} \rfloor,\mathbb{F}^{2n}) 
\cdot \Gr_{\Sp}(2\lfloor \tfrac{n}{2} \rfloor,\mathbb{F}^{2n}) \cdot \Gr_{\Sp}(2\lfloor \tfrac{n}{2} \rfloor, \mathbb{F}^{2n}). 
\end{aligned}
\end{equation}
To the best of our knowledge, this neat relation between a classical group and its corresponding Grassmannian has never been observed before, which is somewhat surprising given that these groups and Grassmannians are ubiquitous and have been thoroughly studied.

It is well-known that isometries are compositions of involutions, whether over real \cite{hoffman_products_1971} or complex \cite{gustafson_products_1976, halmos_products_1958, radjavi_products_1968}, or in a symplectic setting \cite{Ad20,dR15}; see \cite{gustafson_products_1991} for a general overview. We stress that our results in this article cannot be deduced from these existing results and our proofs are of a distinct nature from those in \cite{Ad20, dR15, gustafson_products_1991, gustafson_products_1976, halmos_products_1958, hoffman_products_1971, radjavi_products_1968}. On the other hand, over finite-dimensional spaces, the key results in \cite{Ad20, dR15, gustafson_products_1991, gustafson_products_1976, halmos_products_1958, hoffman_products_1971, radjavi_products_1968} will follow from ours.

Our results in \eqref{eq:main} are exhaustive in an appropriate sense: For a real or complex vector space $\mathbb{V}$ equipped with an additional structure, the Grassmannian over $\mathbb{V}$ should generally respect the group action that preserves that structure. The four cases in \eqref{eq:abstract} discussed in this article cover the most common type of structures on $\mathbb{V}$ --- a nondegenerate bilinear form $\beta$. For a positive definite symmetric or Hermitian $\beta$, the group action is given by $\O(n)$ or $\U(n)$ and we obtain $\mathbb{G}(k, \mathbb{R}^n)$ or $\mathbb{G}(k, \mathbb{C}^n)$. For a skew-symmetric $\beta$, the group action is given by $\Sp(2n, \mathbb{F})$ and we obtain $\mathbb{G}_{\Sp}(2k, \mathbb{F}^{2n})$. We leave other, more esoteric, possibilities to future work.

We prove our results almost entirely from scratch using standard matrix theory,  developing various specialized canonical forms resembling ``eigenvalue decomposition'' for an orthogonal matrix with special symmetry and trace constraint (Lemma~\ref{lem:char}),  a product of two orthogonal involution matrices (Theorem~\ref{thm:char}) and its unitary analogue (Theorem~\ref{thm:char_C_M2}), and a symplectic involution matrix (Lemma~\ref{lem:SPidentification}).  There does not seem to be any prior work that we could draw upon. In particular, none of the techniques in \cite{Ad20, dR15, gustafson_products_1991, gustafson_products_1976, halmos_products_1958, hoffman_products_1971, radjavi_products_1968} could be adapted to meet our requirements.

A few words for readers familiar with Lie Theory: We view the decompositions in \eqref{eq:main} as similar to the classical Lie group decompositions of Bruhat,  Cartan,  Iwasawa,  Jordan, Langlands,  Levi, et al \cite{Borel91, Helgason01, Bump13}. Like these classical decompositions, our product $\cdot$ is given by matrix product; but while the factors in these classical decompositions form subgroups, our factors form subvarieties. Another noteworthy point is that the involution models of Grassmannians in \eqref{eq:invo} are adjoint orbits but the equalities in \eqref{eq:main} do not hold for general adjoint orbits. In fact they do not even hold when $\lfloor n/2 \rfloor$ is replaced with other values. For example we will see in Theorem~\ref{thm:prod-dim} that $\SO(n) \ne \Gr(k_1,\mathbb{R}^n) \cdot \Gr(k_2,\mathbb{R}^n)$ for almost all other values of $k_1,k_2$.

Lastly,  even though we have relied on the involution models in \eqref{eq:invo} to establish \eqref{eq:main}, these results are nevertheless ``model independent'' in an appropriate  sense---they hold true for any arbitrary representations of these groups. Let $\rho : \SO(n) \to \GL(\mathbb{U})$, $\widehat{\rho} : \SU(n)  \to \GL(\mathbb{V})$,  $\widetilde{\rho} : \Sp(2n,\mathbb{F}) \to \GL(\mathbb{W})$ be any representations of the respective groups.  Then it follows from \eqref{eq:main} that
\begin{align*}
\rho\bigl( \SO(n) \bigr) &=\rho\bigl(\Gr(\lfloor \tfrac{n}{2} \rfloor ,  \mathbb{R}^n) \bigr)  \rho \bigl( \Gr(\lfloor \tfrac{n}{2} \rfloor,\mathbb{R}^n) \bigr), \\
\widehat{\rho} \bigl( \SU(n) \bigr) &=\widehat{\rho}\bigl( \Gr(\lfloor \tfrac{n}{2} \rfloor, \mathbb{C}^n) \bigr)  \widehat{\rho}\bigl( \Gr(\lfloor \tfrac{n}{2} \rfloor,\mathbb{C}^n) \bigr)  \widehat{\rho}\bigl(\Gr(\lfloor \tfrac{n}{2} \rfloor, \mathbb{C}^n) \bigr)  \widehat{\rho}\bigl(\Gr(\lfloor \tfrac{n}{2} \rfloor,\mathbb{C}^n) \bigr), \\
 \widetilde{\rho}\bigl(\Sp(2n,\mathbb{F}) \bigr)&=  \widetilde{\rho}\bigl(\Gr_{\Sp}(2\lfloor \tfrac{n}{2} \rfloor, \mathbb{F}^{2n}) \bigr)   \widetilde{\rho}\bigl(\Gr_{\Sp}(2\lfloor \tfrac{n}{2} \rfloor,\mathbb{F}^{2n}) \bigr)   \widetilde{\rho}\bigl(\Gr_{\Sp}(2\lfloor \tfrac{n}{2} \rfloor,\mathbb{F}^{2n}) \bigr)   \widetilde{\rho}\bigl(\Gr_{\Sp}(2\lfloor \tfrac{n}{2} \rfloor, \mathbb{F}^{2n}) \bigr).
\end{align*}

\subsection{Notations and terminologies}\label{sec:nota}

We write $\mathbb{N}$ for the positive integers. Elements of $\mathbb{R}^n$ or $\mathbb{C}^n$ delimited by parentheses like $(a_1,\dots,a_n)$ will be taken as \emph{column} vectors.

We write $\mathsf{S}^2(\mathbb{R}^n)$ for the space of real symmetric matrices and $\mathsf{\Lambda}^2(\mathbb{R}^n)$ for the space of real skew-symmetric matrices. We denote the spectrum of any $X\in\mathbb{C}^{n \times n}$ by $\sigma(X)$.  We write $I$ for an identity matrix of appropriate dimension or $I_n$ when the dimension needs to be specified; likewise we write $0_{m,n}$ for an $m \times n$ zero matrix or just  $0$ when its dimensions are clear from context. We will also define the special matrices
\begin{gather*} 
I_{m,n} = \begin{bmatrix} I_m & 0 \\ 0 &  -I_n \end{bmatrix} \in \mathbb{R}^{(m + n) \times (m + n)}, \qquad
J_{2n} = \begin{bmatrix}
0 & I_n \\
-I_n & 0
\end{bmatrix} \in \mathbb{R}^{2n \times 2n},\\
K_n = \begin{bmatrix}
            0 & 0 & \cdots & 0 & 1 \\
            0 & 0 & \cdots & 1 & 0 \\
            \vdots & \vdots & \reflectbox{$\ddots$} & \vdots & \vdots \\
            0 & 1 & \cdots & 0 & 0 \\
            1 & 0 & \cdots & 0 & 0
            \end{bmatrix} \in \mathbb{R}^{n \times n}.
\end{gather*}
As usual, the orthogonal, special orthogonal, unitary, special unitary, and symplectic groups are
\begin{align*}
\O(n) &=\{ X\in \mathbb{R}^{n\times n}: X^\tp X = I \}, & \SO(n) &=\{ X \in \O(n) : \det(X)=1 \},\\
\U(n) &= \{X \in \mathbb{C}^{n \times n} : X^\h X  = I\}, & \SU(n) &=\{ X\in \U(n): \; \det(X)=1 \},\\
\Sp(2n,\mathbb{F}) &=  \{ X \in \mathbb{F}^{2n \times 2n}: X^\tp J_{2n} X = J_{2n} \},
\end{align*}
where the last may be over $\mathbb{F} = \mathbb{R}$ or $\mathbb{C}$. Other special sets of interests are $\SO^\m (n) \coloneqq \{ X \in \O(n) : \det(X)=-1 \}$ and $\SU^\m (n) \coloneqq \{ X \in \U(n) : \det(X)=-1 \}$.

Let $d, n ,k_1,  \dots,  k_d  \in \mathbb{N}$ and we will always assume that $1 \le k_1,\dots,k_d \le n -1$.  We write
\begin{equation}\label{eq:prodmap}
\begin{aligned}
\Phi(k_1,\dots,  k_d, \mathbb{R}^n) &= \{X_1\cdots  X_d  \in \O(n) : X_i \in \Gr(k_i, \mathbb{R}^n),\; i=1, \dots, d\},  \\
\Phi(k_1,\dots,  k_d, \mathbb{C}^n) &= \{X_1\cdots  X_d  \in \U(n) : X_i \in \Gr(k_i, \mathbb{C}^n),\; i=1, \dots, d\}, \\
\Phi_{\Sp}(2k_1,\dots, 2 k_d, \mathbb{F}^{2n}) &= \{X_1\cdots  X_d  \in \Sp(2n, \mathbb{F}) : X_i \in \Gr_{\Sp}(2k_i, \mathbb{F}^{2n})),\; i=1, \dots, d\}.
\end{aligned}
\end{equation}
Henceforth whenever we refer to a ``Grassmannian'' it will be in sense of the involution models in \eqref{eq:invo}; in particular, a point on a Grassmannian is a matrix. The word ``product'' would mean matrix product, in the sense of two matrices or two subsets of matrices. All subsets of $\mathbb{C}^{n \times n}$ considered in this article will be \emph{real} semialgebraic sets and whenever we speak of their dimension, it will be in the sense of \emph{real} dimension. The one exception is in the context of symplectic $\mathbb{F}$-vector spaces, where we will denote dimension by $\dim_\mathbb{F}$ to emphasize the dependence on $\mathbb{F}$.

\section{Product of two real Grassmannians}\label{sec:dim}

We will study a few properties of $\Phi(k,k',\mathbb{R}^n)$ with the goal of determining its dimension, from which we will deduce our required result that $\Phi(\lfloor \tfrac{n}{2} \rfloor, \lfloor \tfrac{n}{2} \rfloor, \mathbb{R}^n) = \SO(n)$. We remind the reader that an alternative description of the involution model is as
\begin{equation}\label{eq:equiv}
\Gr(k,\mathbb{R}^n) = \{ X \in \mathsf{S}^2(\mathbb{R}^n):  X^2 = I,\; \tr(X) = 2k - n \}
\end{equation}
since an involution is orthogonal if and only if it is symmetric.

We begin with an observation about a product of points from two Grassmannians.
\begin{lemma}\label{lem:conditions}
Let $Z \in \O(n)$ and $k, k' \in \mathbb{N}$. Then the following are equivalent:
\begin{enumerate}[label={\upshape(\roman*)}, ref=\roman*] 
    \item $Z \in \Phi(k,k',\mathbb{R}^n)$.  \label{lem:conditions:item1}
    \item $Q Z Q^\tp  \in \Phi(k,k', \mathbb{R}^n)$ for all $Q\in \O(n)$. \label{lem:conditions:item3}
    \item $Y Z Y = Z^\tp$ and $\tr(ZY) = 2k - n$ for some $Y\in \Gr(k',\mathbb{R}^n)$.  \label{lem:conditions:item2}
\end{enumerate}
\end{lemma}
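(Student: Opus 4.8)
The plan is to verify the two equivalences (i)$\Leftrightarrow$(ii) and (i)$\Leftrightarrow$(iii) directly, in each case by exhibiting or manipulating a factorization $Z = X_1 X_2$ with $X_1 \in \Gr(k,\mathbb{R}^n)$ and $X_2 \in \Gr(k',\mathbb{R}^n)$. The one structural input is the observation recorded around \eqref{eq:equiv}: an orthogonal involution is automatically symmetric, so a matrix lies in $\Gr(k,\mathbb{R}^n)$ as soon as it is orthogonal, squares to $I_n$, and has trace $2k-n$. This is exactly what lets one promote an ad hoc matrix product back into the involution model.

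For (i)$\Rightarrow$(ii): if $Z = X_1 X_2$ with $X_i$ in the respective Grassmannians, then $QZQ^\tp = (QX_1Q^\tp)(QX_2Q^\tp)$ for any $Q \in \O(n)$, and orthogonal conjugation preserves orthogonality, the relation $X^2 = I_n$, and the trace; hence each $QX_iQ^\tp$ stays in its Grassmannian and $QZQ^\tp \in \Phi(k,k',\mathbb{R}^n)$. The converse (ii)$\Rightarrow$(i) is the special case $Q = I_n$. For (i)$\Rightarrow$(iii), take $Y := X_2$; then the symmetry of $X_2$ together with $X_2^2 = I_n$ gives $YZY = X_2(X_1X_2)X_2 = X_2X_1 = (X_1X_2)^\tp = Z^\tp$, while $\tr(ZY) = \tr(X_1X_2^2) = \tr(X_1) = 2k-n$.

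The substantive direction is (iii)$\Rightarrow$(i). The natural candidate is $X_2 := Y$ and $X_1 := ZY$, which indeed reproduces $X_1X_2 = ZY^2 = Z$. It then remains to show $X_1 \in \Gr(k,\mathbb{R}^n)$: it is orthogonal as a product of orthogonal matrices; the hypothesis $YZY = Z^\tp$ yields $X_1^2 = ZYZY = Z(YZY) = ZZ^\tp = I_n$, so $X_1$ is an orthogonal involution and therefore symmetric; and $\tr(X_1) = \tr(ZY) = 2k-n$ by hypothesis, so \eqref{eq:equiv} places $X_1$ in $\Gr(k,\mathbb{R}^n)$, giving $Z = X_1 X_2 \in \Phi(k,k',\mathbb{R}^n)$. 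I do not anticipate a genuine obstacle here; the only points requiring care are invoking ``orthogonal involution $\Rightarrow$ symmetric'' precisely at the step where $ZY$ is upgraded to a bona fide Grassmannian point, and keeping the trace bookkeeping straight so that the constraint reads $2k-n$ (attached to the left factor $X_1 = ZY$) rather than $2k'-n$.
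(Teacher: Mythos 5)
Your proof is correct and follows essentially the same route as the paper: conjugation-invariance for (i)$\Leftrightarrow$(ii), taking $Y = X_2$ and $X_1 = ZY$ for the other equivalence, and invoking ``orthogonal involution $\Rightarrow$ symmetric'' together with the trace constraint to upgrade $ZY$ to a point of $\Gr(k,\mathbb{R}^n)$. The only cosmetic difference is that the paper phrases (iii) via $(ZY)^2 = I$ (which is equivalent to $YZY = Z^\tp$ since $Z^{-1} = Z^\tp$) whereas you derive $X_1^2 = I$ directly from $YZY = Z^\tp$, making the equivalence a touch more explicit.
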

\begin{proof}
Let $Z = X Y \in \Phi(k,k',\mathbb{R}^n)$.  Then  $Q Z Q^\tp = (Q X Q^\tp) (Q Y Q^\tp) \in \Phi(k,k')$ for all $Q\in \O(n)$ and we have \eqref{lem:conditions:item1}$\Rightarrow$\eqref{lem:conditions:item3}.  Clearly \eqref{lem:conditions:item3}$\Rightarrow$\eqref{lem:conditions:item1} with $Q = I$. 

Let  $Z = X Y$ with $X\in \Gr(k, \mathbb{R}^n)$, $Y \in \Gr(k',\mathbb{R}^n)$. Then $Z Y = X  \in \Gr(k,\mathbb{R}^n)$, and so $(ZY)^2 = I$ and $\tr(ZY) = 2k - n$. Hence \eqref{lem:conditions:item1}$\Rightarrow$\eqref{lem:conditions:item2}. Conversely, if $(ZY)^2 = I$ and $\tr(ZY) = 2k - n$ for some $Y\in \Gr(k',\mathbb{R}^n)$,  then as $(ZY) (ZY)^\tp = I = (ZY)^2$, we have $X \coloneqq ZY \in \mathsf{S}^2(\mathbb{R}^n)$ and $\tr(X) = 2k - n$.  Thus $X \in \Gr(k,\mathbb{R}^n)$ by \eqref{eq:equiv}.  This shows \eqref{lem:conditions:item2}$\Rightarrow$\eqref{lem:conditions:item1}.
\end{proof}

We next deduce a canonical form for matrices satisfying $I_{k,n-k} Y I_{k,n-k} = Y^\tp$ that we need in the next result.  
\begin{lemma}\label{lem:char}
Let $k,  n \in \mathbb{N}$ with $2k \le n$.  Let $Y\in \O(n)$. Then $I_{k,n-k} Y I_{k,n-k} = Y^\tp$ and $\tr(Y I_{k,n-k}) = r$ if and only if there exist $
U \in \O(k)$, $V \in \O(n-k)$,  and diagonal matrices $\Sigma,  E_1, E_2 \in \mathbb{R}^{k \times k}$, and $E_3 \in \mathbb{R}^{(n-2k) \times (n-2k)}$ where the diagonal entries of $\Sigma$ are contained in $[0,1]$ and those of $E_1,E_2,E_3$ are $\pm 1$,  such that 
    \begin{equation}\label{lem:char:eq1}
    \Sigma (I_k - \Sigma) (E_1 - E_2) = 0,\qquad \tr \bigl( \sqrt{I_k - \Sigma^2} (E_1 - E_2) \bigr)  - \tr(E_3) = r
    \end{equation}
    and 
    \begin{equation}\label{lem:char:eq2}
Y = 
\begin{bmatrix}
U & 0 \\
0 & V
\end{bmatrix}
\begin{bmatrix}
\sqrt{I_k - \Sigma^2} E_1  & \Sigma & 0  \\
- \Sigma  &  \sqrt{I_k - \Sigma^2} E_2 & 0 \\
0 & 0 & E_3
\end{bmatrix}
\begin{bmatrix}
U & 0 \\
0 & V
\end{bmatrix}^\tp.
\end{equation}
\end{lemma}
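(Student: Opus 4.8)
The plan is to prove the two implications separately; the ``if'' direction is a direct verification, and the ``only if'' direction carries the real content. Write $Y$ in $(k,n-k)$ block form $Y=\begin{bmatrix}A&B\\C&D\end{bmatrix}$. A one-line check shows that $I_{k,n-k}YI_{k,n-k}=Y^\tp$ holds precisely when $A=A^\tp$, $D=D^\tp$, $C=-B^\tp$ (equivalently, when $I_{k,n-k}Y$ is a symmetric orthogonal matrix). Every $\diag(U,V)$ with $U\in\O(k)$, $V\in\O(n-k)$ commutes with $I_{k,n-k}$, so conjugating $Y$ by it preserves both the relation $I_{k,n-k}YI_{k,n-k}=Y^\tp$ and the number $\tr(YI_{k,n-k})$. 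Hence for ``if'' it suffices to verify, for the middle matrix $Y_0$ of \eqref{lem:char:eq2}, that: (a)~$Y_0\in\O(n)$, which after expanding $Y_0^\tp Y_0$ reduces to the scalar identity $\sqrt{I_k-\Sigma^2}\,\Sigma\,(E_1-E_2)=0$, equivalent (since $0\le\Sigma\le I_k$) to the first equation of \eqref{lem:char:eq1}; (b)~$I_{k,n-k}Y_0I_{k,n-k}=Y_0^\tp$, because conjugating by $I_{k,n-k}=\diag(I_k,-I_k,-I_{n-2k})$ in the $(k,k,n-2k)$ block decomposition merely flips the signs of the off-diagonal blocks of $Y_0$, which coincides with transposition since the diagonal blocks are themselves diagonal (hence symmetric); (c)~$\tr(Y_0I_{k,n-k})=\tr(\sqrt{I_k-\Sigma^2}E_1)-\tr(\sqrt{I_k-\Sigma^2}E_2)-\tr(E_3)=r$, the second equation of \eqref{lem:char:eq1}.

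For ``only if'', write $Y=\begin{bmatrix}A&B\\-B^\tp&D\end{bmatrix}$ with $A,D$ symmetric; then $Y^\tp Y=I$ gives $A^2+BB^\tp=I_k$ (so $\|A\|\le1$) and $B^\tp B+D^2=I_{n-k}$. Diagonalize the symmetric matrix $A$: choose $U\in\O(k)$ with $U^\tp AU=\Lambda=\diag(\lambda_1,\dots,\lambda_k)$, $\lambda_i\in[-1,1]$, set $\Sigma=\sqrt{I_k-\Lambda^2}\ge0$, and take $E_1$ to be a diagonal sign matrix with $\Lambda=\sqrt{I_k-\Sigma^2}E_1$. Then the Gram matrix of the rows of $U^\tp B$ is $U^\tp BB^\tp U=I_k-\Lambda^2=\Sigma^2$, which is diagonal; so, using the splitting $\mathbb{R}^{n-k}=\mathbb{R}^k\oplus\mathbb{R}^{n-2k}$ (available since $2k\le n$), a suitable $V\in\O(n-k)$ straightens those rows, giving $U^\tp BV=[\Sigma\ \ 0]$ with $\Sigma$ the $k\times k$ diagonal block and $0$ of size $k\times(n-2k)$. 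Set $\widetilde Y=\diag(U,V)^\tp Y\diag(U,V)$, now written in blocks of sizes $k,k,n-2k$, and partition its lower-right $(n-k)\times(n-k)$ block symmetrically as $\begin{bmatrix}D_{11}&D_{12}\\ D_{12}^\tp&D_{22}\end{bmatrix}$ with $D_{11}$ of size $k$. Orthonormality of the columns of $\widetilde Y$ then forces, for every $i$ with $\sigma_i>0$, that row $i$ of $D_{11}$ is $\lambda_i e_i^\tp$ and row $i$ of $D_{12}$ is zero.

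Let $P_+=\{i:\sigma_i>0\}$, $P_0=\{i:\sigma_i=0\}$. Together with symmetry of $D_{11}$, the previous step shows that up to a simultaneous permutation of rows and columns $\widetilde Y$ decomposes as an orthogonal direct sum of the $2\times2$ blocks $\begin{bmatrix}\lambda_i&\sigma_i\\-\sigma_i&\lambda_i\end{bmatrix}$ for $i\in P_+$ (acting on coordinate $i$ of the $\mathbb{R}^k$ factor and coordinate $i$ of the first $\mathbb{R}^k$-part of the $\mathbb{R}^{n-k}$ factor), the scalars $\lambda_i=\pm1$ for $i\in P_0$ (on coordinate $i$ of the $\mathbb{R}^k$ factor), and a residual symmetric orthogonal matrix $M$ of size $|P_0|+(n-2k)$ acting on a subspace of the $\mathbb{R}^{n-k}$ factor. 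Being symmetric and orthogonal, $M$ is an involution, hence diagonalizable with $\pm1$ eigenvalues by an orthogonal change of basis internal to the $\mathbb{R}^{n-k}$ factor; absorbing this into $V$ and then applying a simultaneous row-and-column permutation (again block-diagonal for $\mathbb{R}^k\oplus\mathbb{R}^{n-k}$) recasts $\widetilde Y$ in the form \eqref{lem:char:eq2}, with $E_2,E_3$ the $\pm1$ diagonal matrices of eigenvalues so produced. The first equation of \eqref{lem:char:eq1} then holds because $E_1=E_2$ on $P_+$ (by the row computation) while $\Sigma=0$ on $P_0$; the second holds because the whole conjugation, being by some $\diag(U,V)$, leaves $\tr(YI_{k,n-k})$ unchanged.

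The step I expect to be the main obstacle is precisely the bookkeeping in the last paragraph: one must ensure that every orthogonal change of basis used --- straightening the rows of $U^\tp B$, diagonalizing $M$, and the final permutation --- can be chosen block-diagonal with respect to the fixed splitting $\mathbb{R}^n=\mathbb{R}^k\oplus\mathbb{R}^{n-k}$, so that the composite is genuinely a conjugation by some $\diag(U,V)$ with $U\in\O(k)$, $V\in\O(n-k)$ and not by an arbitrary element of $\O(n)$. Diagonalizing $M$ is harmless since $M$ already lives in the $\mathbb{R}^{n-k}$ factor; the delicate part is the final repacking, in which each of the $|P_0|$ scalars $\lambda_i=\pm1$ from the $\mathbb{R}^k$ factor has to be matched with one eigenvalue of $M$ to form a degenerate ($\sigma_i=0$) $2\times2$ block, the remaining $n-2k$ eigenvalues of $M$ forming $E_3$. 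This is possible exactly because $M$ has size $|P_0|+(n-2k)\ge|P_0|$, which is where the hypothesis $2k\le n$ is used.
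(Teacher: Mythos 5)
Your proof is correct and follows essentially the same route as the paper's: exploit the block structure forced by $I_{k,n-k} Y I_{k,n-k} = Y^\tp$ together with the orthogonality relations $Y^\tp Y = I$, and reduce to a CS-decomposition-type canonical form. The only real difference is the order of operations --- the paper takes the singular value decomposition of the off-diagonal block first and then reads off the diagonal blocks, whereas you diagonalize the symmetric $(1,1)$ block first and then straighten $B$; your ordering makes the identification of $E_1$ immediate and spells out explicitly the matching of the $\sigma_i = 0$ directions with eigenvalues of the residual involution, a bookkeeping step the paper's proof leaves implicit.
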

\begin{proof}
Partition $Y$ as
\[
Y = \begin{bmatrix}
Y_1 & Y_2 \\
Y_3 & Y_4
\end{bmatrix},\; \; Y_1 \in \mathbb{R}^{k\times k},\; \; Y_2 \in \mathbb{R}^{k \times (n-k)},\; \; Y_3 \in \mathbb{R}^{(n-k) \times k},\; \; Y_4 \in \mathbb{R}^{(n-k)\times (n-k)}.
\]
If $I_{k,n-k} Y I_{k,n-k} = Y^\tp$,  then
\[
\begin{bmatrix}
Y_1 & -Y_2 \\
-Y_3 & Y_4
\end{bmatrix} = \begin{bmatrix}
Y_1^\tp & Y_3^\tp \\
Y_2^\tp & Y_4^\tp
\end{bmatrix}
\]
and so $Y_1 \in \mathsf{S}^2(\mathbb{R}^k)$,  $Y_4 \in \mathsf{S}^2(\mathbb{R}^{n-k})$, and $Y_2 = -Y_3^\tp$.  Since $Y^\tp Y = I$,  we get
\begin{equation}\label{lem:char:eq3}
Y_1^2 + Y_2 Y_2^\tp = I_k,  \quad -Y_1 Y_2 + Y_2 Y_4 = 0,\quad Y_2^\tp Y_2 + Y_4^2 = I_{n-k}.
\end{equation}
Let $Y_2 = U [\Sigma , 0] V^\tp$ be a singular value decomposition with $(U,V) \in \O(k) \times \O(n-k)$ and
\[
[\Sigma , 0] = \begin{bmatrix}
\sigma_1 & \cdots & 0 & 0 & \cdots & 0  \\
\vdots & \ddots & \vdots & \vdots & \ddots & \vdots  \\
0 & \cdots &  \sigma_k & 0 & \cdots & 0
\end{bmatrix}  \in \mathbb{R}^{k \times (n-k)},\quad \sigma_1 \ge \cdots \ge \sigma_k.
\]
Note that $\sigma_1, \dots , \sigma_k \in [0,1]$ as $Y_2$ is a block of an orthogonal matrix. By \eqref{lem:char:eq3},  we get
\[
Y_1^2= U (I_k - [ \Sigma, 0] [\Sigma, 0]^\tp) U^\tp,  \quad -Y_1 Y_2 + Y_2 Y_4 = 0,\quad Y_4^2 = V (I_{n-k} - [\Sigma, 0]^\tp [\Sigma, 0]) V^\tp.
\]
Write $D \coloneqq \diag\bigl( \sqrt{1-\sigma_1^2},\dots,  \sqrt{\smash[b]{1- \sigma_k^2}} \bigr) = \sqrt{I_ k - \Sigma^2}$. Then
\[
Y_1 = U D E_1 U^\tp,\quad Y_4 = V \begin{bmatrix}
DE_2 & 0 \\
0 & E_3
\end{bmatrix} V^\tp, 
\]
where $E_1,E_2 \in \mathbb{R}^{k \times k}$, and $E_3 \in \mathbb{R}^{(n-2k) \times (n-2k)}$ are diagonal matrices with diagonal entries $\pm 1$ satisfying $\Sigma D(E_1 - E_2) = 0$. Therefore
\[
Y = 
\begin{bmatrix}
U & 0 \\
0 & V
\end{bmatrix}
\begin{bmatrix}
D E_1  & \Sigma & 0  \\
- \Sigma  &  DE_2 & 0 \\
0 & 0 & E_3
\end{bmatrix}
\begin{bmatrix}
U & 0 \\
0 & V
\end{bmatrix}^\tp
\]
as required.
\end{proof}
Although we prefer to rely on the more familiar singular value decomposition in our proof, we thank our reviewer for pointing out that Lemma~\ref{lem:char} may also be obtained by applying the CS decomposition \cite[Section~2.6]{GV} to $Y$. Indeed, the decomposition in \eqref{lem:char:eq2} contains features of both the CS decomposition as well as the real Schur decomposition \cite[Section~B.5]{Higham}. We may now characterize a point in $\Phi(k,k',\mathbb{R}^n)$.
\begin{theorem}[Canonical form for product of two real Grassmannians]\label{thm:char}
Let $k, k', n \in \mathbb{N}$ with $k' \le k < n$ and $k + k' \le n$. Then  $Z  \in \Phi(k,  k', \mathbb{R}^n)$ if and only if there exist $Q  \in \O(n)$ and diagonal matrices $\Sigma,  E \in \mathbb{R}^{k' \times k'}$ with diagonal entries in $[0,1]$ and $\{-1,1\}$ respectively,  such that 
\[
Z = 
Q \begin{bmatrix}
\sqrt{I_{k'} - \Sigma^2} E  & \Sigma & 0  \\
- \Sigma  &  \sqrt{I_{k'} - \Sigma^2} E & 0 \\
0 & 0 & I_{n-k - k',  k - k'}
\end{bmatrix} Q^\tp.
\]
\end{theorem}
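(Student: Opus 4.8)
The plan is to pass, via Lemma~\ref{lem:conditions}, to a normalized situation governed by Lemma~\ref{lem:char}, and then to reorganize the block form that Lemma~\ref{lem:char} produces into the one asserted here. For the ``only if'' direction, suppose $Z\in\Phi(k,k',\mathbb{R}^n)$. By the equivalence \eqref{lem:conditions:item1}$\Leftrightarrow$\eqref{lem:conditions:item2} of Lemma~\ref{lem:conditions} there is a $Y\in\Gr(k',\mathbb{R}^n)$ with $YZY=Z^\tp$ and $\tr(ZY)=2k-n$. As $Y$ is a symmetric involution of trace $2k'-n$, the spectral theorem gives $Y=Q_0 I_{k',n-k'}Q_0^\tp$ for some $Q_0\in\O(n)$; replacing $Z$ by $Q_0^\tp ZQ_0$ and absorbing $Q_0$ into the eventual $Q$, we may assume $Y=I_{k',n-k'}$, so $I_{k',n-k'}ZI_{k',n-k'}=Z^\tp$ and $\tr(ZI_{k',n-k'})=2k-n$. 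Since $2k'\le k+k'\le n$, Lemma~\ref{lem:char} applies with its $(k,Y,r)$ taken to be our $(k',Z,2k-n)$, yielding $U\in\O(k')$, $V\in\O(n-k')$ and diagonal matrices $\Sigma,E_1,E_2,E_3$ obeying \eqref{lem:char:eq1} and the factorization \eqref{lem:char:eq2}. Conjugating once more by $\begin{bmatrix}U&0\\0&V\end{bmatrix}^\tp$ (again absorbed into $Q$), we may assume $Z$ equals the middle matrix of \eqref{lem:char:eq2}.

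Now write $d_i=\sqrt{1-\sigma_i^2}$, so that $Z$ is the direct sum of the $2\times2$ blocks $B_i=\begin{bmatrix}d_i(E_1)_{ii}&\sigma_i\\-\sigma_i&d_i(E_2)_{ii}\end{bmatrix}$ on the index pairs $\{i,k'+i\}$, $i=1,\dots,k'$, together with the diagonal $\pm1$ matrix $E_3$ on the indices $2k'+1,\dots,n$. The first identity in \eqref{lem:char:eq1} forces $(E_1)_{ii}=(E_2)_{ii}=:e_i$ whenever $0<\sigma_i<1$, and then $B_i=\begin{bmatrix}d_ie_i&\sigma_i\\-\sigma_i&d_ie_i\end{bmatrix}$ is already a $2\times2$ block of the target matrix; likewise when $\sigma_i=1$ one has $B_i=\begin{bmatrix}0&1\\-1&0\end{bmatrix}$, a target block with $\sigma=1$. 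Thus the only blocks needing attention are the $B_i=\diag\bigl((E_1)_{ii},(E_2)_{ii}\bigr)$ with $\sigma_i=0$. Let $g$ be the number of indices $i$ with $\sigma_i>0$; these $\sigma_i=0$ blocks together with $E_3$ form a diagonal $\pm1$ matrix of size $n-2g$ on the index set complementary to the $g$ ``good'' blocks, and conjugating $Z$ by a permutation that fixes the good indices leaves those blocks untouched while permuting the entries of this diagonal matrix arbitrarily. Let it have $P$ entries equal to $+1$ and $N$ equal to $-1$. The second identity in \eqref{lem:char:eq1} reads $\sum_{i:\sigma_i=0}\bigl((E_1)_{ii}-(E_2)_{ii}\bigr)-\tr(E_3)=2k-n$; combining this with the fact that $E_3$ has $n-2k'$ diagonal entries, a direct count of the four possibilities for $\bigl((E_1)_{ii},(E_2)_{ii}\bigr)$ gives $P=(n-k-k')+2t$ and $N=(k-k')+2t'$ for nonnegative integers $t,t'$ with $t+t'=k'-g$. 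Hence we may permute the entries so that $t$ of the $\sigma_i=0$ pairs $\{i,k'+i\}$ carry $I_2$, $t'$ carry $-I_2$, and the $n-2k'$ entries on the indices $2k'+1,\dots,n$ read exactly $I_{n-k-k',\,k-k'}$. Since $\pm I_2$ are target blocks with $\sigma=0$, collecting the $g$ good blocks and these $k'-g$ trivial ones as the $k'$-fold upper-left block of the target matrix and $I_{n-k-k',k-k'}$ as its lower-right block gives the asserted form for $Z$.

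The converse is immediate from Lemma~\ref{lem:conditions}. If $Z=QM_0Q^\tp$ with $M_0$ the displayed block matrix, set $Y_0=Q\diag(I_{k'},-I_{k'},-I_{n-2k'})Q^\tp$; then $Y_0$ is a symmetric involution of trace $2k'-n$, so $Y_0\in\Gr(k',\mathbb{R}^n)$ by \eqref{eq:equiv}. Because the diagonal blocks $\sqrt{I_{k'}-\Sigma^2}E$ and $I_{n-k-k',k-k'}$ are symmetric, conjugating $M_0$ by $\diag(I_{k'},-I_{k'},-I_{n-2k'})$ merely flips the signs of the two $\Sigma$ blocks, giving $Y_0ZY_0=Z^\tp$; and $\tr(ZY_0)=\tr\bigl(\sqrt{I_{k'}-\Sigma^2}E\bigr)-\tr\bigl(\sqrt{I_{k'}-\Sigma^2}E\bigr)-\tr\bigl(I_{n-k-k',k-k'}\bigr)=2k-n$. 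By \eqref{lem:conditions:item2}$\Rightarrow$\eqref{lem:conditions:item1} of Lemma~\ref{lem:conditions} we conclude $Z\in\Phi(k,k',\mathbb{R}^n)$.

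The substantive step is the reorganization in the second paragraph: once Lemma~\ref{lem:char} is in hand, everything reduces to showing that the $\sigma_i=0$ blocks together with $E_3$, after a permutation, peel off precisely $I_{n-k-k',k-k'}$. This is exactly where the trace relation in \eqref{lem:char:eq1} is needed, and checking that the resulting $t,t'$ are nonnegative integers of the correct parity is the one genuinely arithmetic point; everything else is routine bookkeeping with block-diagonal matrices, together with the (automatic) fact that real orthogonal matrices with the same diagonal $\pm1$ parts are orthogonally similar via a permutation.
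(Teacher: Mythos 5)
Your proposal is correct and follows essentially the same route as the paper: reduce via Lemma~\ref{lem:conditions} to the normalized situation, invoke Lemma~\ref{lem:char}, and then use the trace identity in \eqref{lem:char:eq1} to rearrange the $\sigma_i\in\{0,1\}$ blocks and $E_3$ into $E_1=E_2$ and $I_{n-k-k',k-k'}$. Your counting argument ($P=(n-k-k')+2t$, $N=(k-k')+2t'$) just makes explicit the swap the paper performs more tersely, and your direct verification of the converse is a valid substitute for the paper's implicit chain of equivalences.
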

\begin{proof}
By Lemma~\ref{lem:conditions},  $Z \in \Phi(k, k', \mathbb{R}^n)$ if and only if $X Z X = Z^\tp$ and $\tr(ZX) = 2k - n$ for some $X = R I_{k',  n-k'} R^\tp \in \Gr(k', \mathbb{R}^n)$,  $R\in \O(n)$.  Let $Y \coloneqq R^\tp Z R$.  Then by Lemma~\ref{lem:char}, $Y$ decomposes as in \eqref{lem:char:eq2} into factors satisfying \eqref{lem:char:eq1}.  Setting $Q = R \diag (U,  V)$,  we obtain 
\begin{equation}\label{eq:Z1}
Z = 
Q \begin{bmatrix}
\sqrt{I_{k'} - \Sigma^2} E_1  & \Sigma & 0  \\
- \Sigma  &  \sqrt{I_{k'} - \Sigma^2} E_2 & 0 \\
0 & 0 & E_3
\end{bmatrix} Q^\tp
\end{equation}
where
\[
\Sigma (I_{k'} - \Sigma) (E_1 - E_2) = 0,\quad \tr \big( \sqrt{I_{k'} - \Sigma^2} (E_1 - E_2) \bigr)  - \tr(E_3) = 2k - n.
\]
We may further assume that $E_1 = E_2$ and $E_3 = I_{n-k - k',  k - k'}$ so that $Z$ has the desired form: Let $\Sigma = \diag(\sigma_1,\dots,\sigma_{k'})$. When $\sigma_i \in (0,1)$, $\Sigma(I_{k'}-\Sigma)(E_1-E_2)=0$ forces the diagonal entries $(E_1)_{ii}=(E_2)_{ii}$.  When $\sigma_i=1$, the values of the diagonal entries $(E_1)_{ii}$ and $(E_2)_{ii}$ has no effect on the matrix $Z$ in \eqref{eq:Z1}, and we may replace $(E_2)_{ii}$ with $-(E_2)_{ii}$.  When $\sigma_i = 0$ and $(E_1)_{ii} (E_2)_{ii} = -1$,  the $2 \times 2$ submatrix of $Z$ formed by the $i$th and $(k'+i)$th rows and columns is either $\begin{bsmallmatrix}
1 & 0 \\
0 & -1
\end{bsmallmatrix}$ or $\begin{bsmallmatrix}
-1 & 0 \\
0 & 1
\end{bsmallmatrix}$.  Since $k + k' \le n$ and $\tr \big( \sqrt{I_{k'} - \Sigma^2} (E_1 - E_2) \bigr)  - \tr(E_3) = 2k - n$,  we may interchange $(E_2)_{ii}$ with a diagonal entry of $E_3$ to get a new $E_2$ with $(E_2)_{ii} = (E_1)_{ii}$. By repeating these steps, we arrive at $E_1 = E_2$.
\end{proof}

We will next develop some basic calculus for working with $\Phi(k,k',\mathbb{R}^n)$. As will be evident from the proof (but with Theorem~\ref{thm:char_C_M2} taking the place of Theorem~\ref{thm:char}), the following properties also hold with $\mathbb{C}^n$ in place of $\mathbb{R}^n$.
\begin{corollary}\label{cor:char}
Let $k, k', n \in \mathbb{N}$ with $k,k'  \le n$.  We have 
\begin{align*}
        \Phi(n-k,  k',\mathbb{R}^n) &= \Phi(k,  n-k',\mathbb{R}^n) =- \Phi(k,k',\mathbb{R}^n), \\
        \Phi(n-k, n-k',\mathbb{R}^n) &= \Phi(k,k',\mathbb{R}^n),\\
        \Phi(k',k,\mathbb{R}^n) &= \Phi(k,k',\mathbb{R}^n).
    \end{align*}
\end{corollary}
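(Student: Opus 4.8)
The plan is to reduce Corollary~\ref{cor:char} to two elementary facts about the involution model \eqref{eq:invo}, neither of which needs the canonical form of Theorem~\ref{thm:char}. The first is the \emph{sign identity} $\Gr(n-k,\mathbb{R}^n) = -\Gr(k,\mathbb{R}^n)$, and the second is the \emph{conjugation identity}: for any involution $X$ and any $Y$ one has $XY = (XYX)X$, while conjugation by an orthogonal matrix preserves each $\Gr(k',\mathbb{R}^n)$. Granting these, all three lines of the corollary drop out by manipulating the set products $\Phi(k,k',\mathbb{R}^n) = \Gr(k,\mathbb{R}^n)\cdot\Gr(k',\mathbb{R}^n)$ formally.

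For the sign identity: if $X \in \Gr(k,\mathbb{R}^n)$ then $-X \in \O(n)$, $(-X)^2 = X^2 = I$, and $\tr(-X) = -(2k-n) = 2(n-k)-n$, so $-X \in \Gr(n-k,\mathbb{R}^n)$ by \eqref{eq:invo}; since $X \mapsto -X$ is its own inverse on $\O(n)$, this yields $\Gr(n-k,\mathbb{R}^n) = -\Gr(k,\mathbb{R}^n)$. Now write a generic element of $\Phi(n-k,k',\mathbb{R}^n)$ as $X_1 X_2$ with $X_1 \in \Gr(n-k,\mathbb{R}^n)$ and $X_2 \in \Gr(k',\mathbb{R}^n)$, and set $X_1 = -X_1'$ with $X_1' \in \Gr(k,\mathbb{R}^n)$; then $X_1 X_2 = -(X_1' X_2)$, and running the argument in reverse gives $\Phi(n-k,k',\mathbb{R}^n) = -\Phi(k,k',\mathbb{R}^n)$. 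Applying the same to the second factor gives $\Phi(k,n-k',\mathbb{R}^n) = -\Phi(k,k',\mathbb{R}^n)$, and applying it to both factors at once gives $\Phi(n-k,n-k',\mathbb{R}^n) = \Phi(k,k',\mathbb{R}^n)$. This is exactly the first two lines of the corollary.

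For the swap identity, let $Z \in \Phi(k,k',\mathbb{R}^n)$, say $Z = XY$ with $X \in \Gr(k,\mathbb{R}^n)$ and $Y \in \Gr(k',\mathbb{R}^n)$. Since $X^2 = I$ we may write $Z = XY = (XYX)X$. Here $XYX = X Y X^\tp$ is orthogonally similar to $Y$ (note $X^{-1} = X^\tp = X$), and the three defining conditions in \eqref{eq:invo} --- lying in $\O(n)$, squaring to $I$, and having trace $2k'-n$ --- are all invariant under orthogonal conjugation; hence $XYX \in \Gr(k',\mathbb{R}^n)$, while $X \in \Gr(k,\mathbb{R}^n)$. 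Thus $Z = (XYX)X \in \Gr(k',\mathbb{R}^n)\cdot\Gr(k,\mathbb{R}^n) = \Phi(k',k,\mathbb{R}^n)$, so $\Phi(k,k',\mathbb{R}^n) \subseteq \Phi(k',k,\mathbb{R}^n)$, and interchanging $k$ with $k'$ gives equality. Every step uses only that products and conjugates of orthogonal matrices are orthogonal and that traces are conjugation-invariant, so the argument is insensitive to the ground field and proves the corresponding statement with $\mathbb{C}^n$, $\U(n)$, $^\h$ in place of $\mathbb{R}^n$, $\O(n)$, $^\tp$, as announced in the sentence preceding the corollary. I do not anticipate a genuine obstacle; the only thing to check is that each $\Phi$ appearing in the statement lies in the range in which \eqref{eq:prodmap} is defined and that the reductions above never exit that range, which is immediate. (Alternatively, one could read off all three identities from the canonical form of Theorem~\ref{thm:char} by inspecting the effect of $k\leftrightarrow k'$, $k\leftrightarrow n-k$, and an overall sign on the displayed middle block, but this first requires reducing to the hypotheses $k'\le k$ and $k+k'\le n$ of that theorem, which the self-contained argument above avoids.)
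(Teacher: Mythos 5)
Your proof is correct, and the treatment of the swap identity $\Phi(k',k,\mathbb{R}^n) = \Phi(k,k',\mathbb{R}^n)$ is genuinely different from the paper's and arguably cleaner. The paper first observes $YX = (XY)^\tp$ so that $\Phi(k',k,\mathbb{R}^n) = \Phi(k,k',\mathbb{R}^n)^\tp$, and then has to show transpose-invariance of $\Phi(k,k',\mathbb{R}^n)$ by invoking the canonical form of Theorem~\ref{thm:char}; since that theorem requires $k'\le k$ and $k+k'\le n$, the paper splits into two cases and uses the sign identities to reduce the case $k+k'>n$ to the other. Your argument instead uses the identity $XY = (XYX)X$, valid because $X$ is an involution, together with the observation that $\Gr(k',\mathbb{R}^n)$ is stable under orthogonal conjugation (all three defining conditions in \eqref{eq:invo} are conjugation-invariant). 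This yields $\Phi(k,k',\mathbb{R}^n)\subseteq\Phi(k',k,\mathbb{R}^n)$ directly, with equality by symmetry, and it requires no case analysis, no restriction on $k,k'$, and no appeal to Theorem~\ref{thm:char}. The trade-off is purely expository: the paper has Theorem~\ref{thm:char} on hand anyway and uses it, whereas your route is self-contained and more elementary, and — as you note — carries over verbatim to $\mathbb{C}^n$ since it only uses conjugation-invariance of the Grassmannian and the involution property. Both proofs handle the sign identities the same way.
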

\begin{proof}
The first three equalities are consequence of the fact that in the involution model \eqref{eq:invo}, $\Gr(n-k,\mathbb{R}^n) = - \Gr(k,\mathbb{R}^n)$. It remains to establish the last equality.
For $(X,Y) \in \Gr(k,\mathbb{R}^n) \times \Gr(k',\mathbb{R}^n)$,  we have 
\[
 \Phi(k',k,\mathbb{R}^n) \ni Y X = (X Y)^\tp \in \Phi(k,k',\mathbb{R}^n)^\tp.
\]
Without loss of generality, let $k' \le k$.  If $k + k' \le n$,  then it follows from Theorem~\ref{thm:char}  that $\Phi(k,k',\mathbb{R}^n)^\tp = \Phi(k,k',\mathbb{R}^n)$,  from which we have 
\[
\Phi(k',k,\mathbb{R}^n) = \Phi(k,k',\mathbb{R}^n)^\tp = \Phi(k,k',\mathbb{R}^n).
\]
If $n < k + k'$,  then $(n-k) + (n-k') < n$ and $n-k \le n-k'$.  It follows from Theorem~\ref{thm:char} that $\Phi(n-k',n-k,\mathbb{R}^n)^\tp = \Phi(n-k',n-k,\mathbb{R}^n) $, and therefore
\begin{align*}
    \Phi(k,k',\mathbb{R}^n) &= \Phi(n-k,n-k',\mathbb{R}^n) = \Phi(n-k',n-k,\mathbb{R}^n)^\tp \\
    &= \Phi(n-k',n-k,\mathbb{R}^n) = \Phi(k',k,\mathbb{R}^n),
\end{align*}
as required.
\end{proof}

Let $k', k, n \in \mathbb{N}$ with $k \le k' \le n$ and $k+k' \le n$.  For any $k \times k$ diagonal matrices $\Sigma$ and $E$ with diagonal entries respectively in $[0,1]$ and $\{-1,1\}$,  we define the $\O(n)$-orbit
\[
\Orb_{k'} (\Sigma,E) \coloneqq \left\lbrace
Q \begin{bmatrix}
\sqrt{I_k - \Sigma^2} E  & \Sigma & 0  \\
- \Sigma  &  \sqrt{I_k - \Sigma^2} E & 0 \\
0 & 0 & I_{n-k' - k,  k' - k}
\end{bmatrix} Q^\tp \in \O(n):  Q\in  \O(n)
\right\rbrace.
\] 
The $3 \times 3$ block matrix in the middle is in $\O(n)$ by our choice of $\Sigma$ and $E$.  Note that there are $2^k$ possibilities for the $k \times k$ diagonal matrix $E$ with diagonal entries $\pm 1$ and two different $E$'s, even with the same signature, may give different orbits, as $
\tr(\sqrt{I_k-\Sigma^2}E)$ may be different for different $E$'s. We will find another representative for this orbit from which we may deduce its dimension later.
\begin{lemma}\label{lem:orbit}
Let $\Sigma \in \mathbb{R}^{k \times k}$ be the diagonal matrix
\[
\Sigma = \diag(\sigma_0 I_{m_0},   \dots,  \sigma_{s+1} I_{m_{s+1}}) 
\]  
where $1 = \sigma_0  >  \sigma_1 > \cdots > \sigma_s > \sigma_{s+1} = 0$ and $m_0,m_{s+1}\in \mathbb{N} \cup \{0\}$ and $m_1,\dots,m_s \in \mathbb{N}$ satisfying $ m_0+ \dots + m_{s+1} = k$.  Let $m \coloneqq  m_0 + \dots + m_s$.  Then for $k'\in \mathbb{N}$ with $k' \ge k$ and $k+k' \le n$,
\[
\Orb_{k'}(\Sigma,E) = \lbrace
Q F Q^\tp: Q\in \O(n)
\rbrace,
\]
for some
\begin{equation}\label{lem:orbit:eq1}
F \coloneqq
 \begin{bsmallmatrix}
0&  I_{m_0}    & 0 & 0 & \cdots  & 0 & 0 & 0   \\
- I_{m_0} & 0   & 0 & 0 & \cdots & 0 & 0 & 0   \\
0 & 0 & \sqrt{1-\sigma_1^2} I_{p_1,q_1} & \sigma_1 I_{m_1}    & \cdots & 0 & 0  & 0   \\
0 & 0 &  - \sigma_1 I_{m_1}  &  \sqrt{1-\sigma_1^2} I_{p_1,q_1}  & \cdots & 0 & 0 & 0   \\[-1ex]
 \vdots & \vdots & \vdots & \ddots & \ddots & \vdots & \vdots & \vdots \\[0.7ex]
0 & 0 & 0  & 0 & \cdots  &   \sqrt{1-\sigma_s^2} I_{p_s,q_s} & \sigma_s I_{m_s} & 0   \\
0 & 0 & 0  & 0 &  \cdots  &  -\sigma_s I_{m_s}   & \sqrt{1-\sigma_s^2} I_{p_s,q_s} & 0   \\
0 & 0 & 0  & 0 &  \cdots  & 0  &0  & I_{q,n-2m-q} 
\end{bsmallmatrix} \in \mathbb{R}^{n \times n}
\end{equation}
where $q \in \mathbb{N}$ with $n-k'-k \le q \le n-2m$ and $p_j, q_j \in \mathbb{N}$ satisfy $p_j + q_j = m_j$,  $j=1,\dots, s$.
\end{lemma}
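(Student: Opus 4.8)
By definition $\Orb_{k'}(\Sigma,E)$ is the $\O(n)$-conjugation orbit of the central matrix
\[
M \coloneqq \begin{bmatrix}
\sqrt{I_k - \Sigma^2}\,E & \Sigma & 0 \\
-\Sigma & \sqrt{I_k - \Sigma^2}\,E & 0 \\
0 & 0 & I_{n-k'-k,\,k'-k}
\end{bmatrix},
\]
so it suffices to exhibit a matrix $F$ of the form \eqref{lem:orbit:eq1} with $F = PMP^\tp$ for some $P\in\O(n)$; the orbit $\{QFQ^\tp: Q\in\O(n)\}$ then coincides with $\Orb_{k'}(\Sigma,E)$. The plan is to decouple $M$ along the distinct singular values $\sigma_0 > \cdots > \sigma_{s+1}$ and treat each piece by a permutation similarity.

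Since $\Sigma = \diag(\sigma_0 I_{m_0}, \dots, \sigma_{s+1}I_{m_{s+1}})$, reordering coordinates so that the two copies of each $m_j$-block become adjacent is a permutation similarity that turns $M$ into $\diag(M_0,\dots,M_s,L)$, where
\[
M_j = \begin{bmatrix}
\sqrt{1-\sigma_j^2}\,E_{[j]} & \sigma_j I_{m_j}\\
-\sigma_j I_{m_j} & \sqrt{1-\sigma_j^2}\,E_{[j]}
\end{bmatrix}, \qquad 0 \le j \le s,
\]
with $E_{[j]}$ the diagonal sub-block of $E$ indexed by the $j$th block, and $L \coloneqq \diag\bigl(E_{[s+1]},E_{[s+1]},I_{n-k'-k,\,k'-k}\bigr)$ is a diagonal $\pm 1$ matrix of size $2m_{s+1}+(n-2k) = n-2m$, writing $m \coloneqq m_0+\cdots+m_s$. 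The matrix $F$ in \eqref{lem:orbit:eq1} is precisely the analogous direct sum $\diag(F_0,\dots,F_s,I_{q,\,n-2m-q})$, so it is enough to match the blocks one at a time.

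For $j=0$ we have $\sigma_0 = 1$, hence $\sqrt{1-\sigma_0^2}=0$ and $M_0 = \begin{bsmallmatrix} 0 & I_{m_0}\\ -I_{m_0} & 0\end{bsmallmatrix} = F_0$, independent of $E$. For $1\le j\le s$, let $p_j$ and $q_j \coloneqq m_j - p_j$ be the numbers of $+1$'s and $-1$'s on the diagonal of $E_{[j]}$ and choose a permutation matrix $\pi_j \in \O(m_j)$ with $\pi_j E_{[j]}\pi_j^\tp = I_{p_j,q_j}$; conjugating $M_j$ by $\diag(\pi_j,\pi_j)$ fixes the scalar off-diagonal blocks $\pm\sigma_j I_{m_j}$ and replaces $E_{[j]}$ by $I_{p_j,q_j}$, yielding exactly $F_j$. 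Finally $L$ is a diagonal $\pm 1$ matrix of size $n-2m$ with $q \coloneqq 2t + (n-k'-k)$ entries equal to $+1$, where $t$ is the number of $+1$'s in $E_{[s+1]}$; since $0\le t\le m_{s+1}=k-m$ and $k\le k'$, one verifies $n-k'-k\le q\le n-2m$, so a further permutation similarity carries $L$ onto $I_{q,\,n-2m-q}$. Composing the initial permutation with the block conjugations $\diag(\pi_j,\pi_j)$ and this last permutation produces a single $P\in\O(n)$ with $PMP^\tp = F$. (If any of $m_0$, $m_{s+1}$, $p_j$, $q_j$, $q$, $n-2m-q$ equals $0$ the corresponding block is simply absent, in accordance with the conventions in the statement.) The only step requiring genuine care is this last one---keeping track of which summands of $L$ contribute $+1$'s versus $-1$'s so as to certify that $q$ lies in the prescribed range $[\,n-k'-k,\ n-2m\,]$, using $k\le k'$; the rest is routine bookkeeping with permutation matrices.
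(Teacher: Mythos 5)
Your proof is correct and takes essentially the same approach as the paper: conjugate the central matrix by a permutation to decouple it into a direct sum of blocks indexed by the distinct singular values $\sigma_0,\dots,\sigma_{s+1}$, then normalize each $\pm1$ diagonal sub-block to the form $I_{p_j,q_j}$ (respectively $I_{q,n-2m-q}$) by a further permutation. The paper presents this iteratively while you do the block-diagonalization in one shot and also spell out the bookkeeping that certifies $n-k'-k\le q\le n-2m$, but the underlying argument is the same.
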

\begin{proof}
Let $\Sigma_1 = \diag(\sigma_0 I_{m_0},   \dots,  \sigma_s I_{m_s})$  so that $\Sigma = \diag(\Sigma_1,  0_{m_{s+1}})$.  By conjugating
\[
F_2 =
\begin{bmatrix}
\sqrt{I_k - \Sigma^2} E  & \Sigma & 0  \\
- \Sigma  &  \sqrt{I_k - \Sigma^2} E & 0 \\
0 & 0 & I_{n-k' - k,  k' - k}
\end{bmatrix} \in \Orb_{k'}(\Sigma,E)
\]
with a permutation matrix $P$ (which has the effect of simultaneously permuting the rows and columns), we can bring it into the form
\[
F_1 = P F_2 P^\tp =
\begin{bmatrix}
 \sqrt{I_{m} - \Sigma_1^2} D_1 & \Sigma_1    & 0 & 0 & 0 \\
 -\Sigma_1  &  \sqrt{I_{m} - \Sigma_1^2} D_1  & 0 & 0 & 0  \\
 0  & 0 & D_2 & 0  & 0  \\
 0  & 0 & 0  & D_2  & 0  \\
 0  & 0 & 0  & 0  & I_{n-k' - k,  k' - k}  \\
\end{bmatrix}
\]
for some diagonal matrices $D_1\in \mathbb{R}^{m \times m}$ and $D_2 \in \mathbb{R}^{m_{s+1} \times m_{s+1} }$ with diagonal entries $\pm1$. Let $q$ be the number of $1$'s on the diagonal of $\diag(D_2,  D_2,  I_{n-k' - k,  k' - k})$, which can be brought into $I_{q,n-2m-q}$ by another permutation. 

The above procedure extracts the block $\sigma_{s+1} I_{m_{s+1}}$ out of $\Sigma$ and contributes to the bottom-right block $I_{q, n-2m-q}$ of $F$.  By repeating this procedure to extract the blocks $\sigma_1I_{m_1}, \dots, \sigma_sI_{m_s}$ out of $\Sigma$, the resulting matrix will have the required form in \eqref{lem:orbit:eq1}.
\end{proof} 

\begin{proposition}\label{prop:orb-dim}
Let $k, k', n,  q, m_0$ and $p_j,  q_j$,  $j = 1,\dots, s$,  be as in Lemma~\ref{lem:orbit}.  Then 
\[
\dim \Orb_{k'}(\Sigma,E) = \binom{n}{2} - \biggl[ m_0^2 +  \sum_{j=1}^s (p_j^2 + q_j^2) + \binom{q}{2} + \binom{n - q - 2m}{2}
\biggr].
\]
\end{proposition}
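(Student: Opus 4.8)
The plan is to obtain $\dim\Orb_{k'}(\Sigma,E)$ from the orbit--stabilizer relation for the conjugation action of $\O(n)$. By Lemma~\ref{lem:orbit}, $\Orb_{k'}(\Sigma,E)=\{QFQ^\tp:Q\in\O(n)\}$ with $F$ the matrix in \eqref{lem:orbit:eq1}, so this orbit is diffeomorphic to $\O(n)/\Stab_{\O(n)}(F)$ and
\[
\dim\Orb_{k'}(\Sigma,E)=\dim\O(n)-\dim\Stab_{\O(n)}(F)=\binom n2-\dim\Stab_{\O(n)}(F).
\]
Since $QFQ^\tp=F$ is equivalent to $QF=FQ$ for $Q\in\O(n)$, the stabilizer is the closed subgroup $\{Q\in\O(n):QF=FQ\}$, and by Cartan's closed subgroup theorem its dimension equals that of its Lie algebra $\{A\in\mathsf{\Lambda}^2(\mathbb{R}^n):AF=FA\}$. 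So it suffices to show this commutant has dimension $m_0^2+\sum_{j=1}^s(p_j^2+q_j^2)+\binom q2+\binom{n-q-2m}2$.

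First I would put $F$ into a rotation--block normal form. Permuting rows and columns simultaneously, $F$ is orthogonally conjugate to a block-diagonal matrix built from $q$ blocks $[1]$, $n-2m-q$ blocks $[-1]$, $m_0$ copies of $R(\tfrac\pi2)$, and, for each $j$, $p_j$ copies of $R(\theta_j)$ together with $q_j$ copies of $R(\pi-\theta_j)$, where $R(\alpha)\coloneqq\begin{bsmallmatrix}\cos\alpha&\sin\alpha\\-\sin\alpha&\cos\alpha\end{bsmallmatrix}$ and $\theta_j\coloneqq\arcsin\sigma_j\in(0,\tfrac\pi2)$, using $\sqrt{1-\sigma_j^2}=\cos\theta_j$ and $\sin(\pi-\theta_j)=\sin\theta_j=\sigma_j$. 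The key point is that, because $0<\sigma_s<\cdots<\sigma_1<1$, the angles $0,\theta_s,\dots,\theta_1,\tfrac\pi2,\pi-\theta_1,\dots,\pi-\theta_s,\pi$ are pairwise distinct; hence the real $F$-invariant subspaces $V_{+1}$ (of dimension $q$), $V_{-1}$ (dimension $n-2m-q$), $W_{\pi/2}$ (dimension $2m_0$), $W_{\theta_j}$ (dimension $2p_j$) and $W_{\pi-\theta_j}$ (dimension $2q_j$) lie in pairwise different eigenspaces of the normal matrix $F$, so they are mutually orthogonal and any $A$ commuting with $F$ preserves each of them. Consequently $A$ is block-diagonal with respect to the orthogonal decomposition $\mathbb{R}^n=V_{+1}\oplus V_{-1}\oplus W_{\pi/2}\oplus\bigoplus_j(W_{\theta_j}\oplus W_{\pi-\theta_j})$.

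It then remains to count the dimension of the commutant block by block and sum. On $V_{\pm1}$ the matrix $F$ acts as $\pm I$, so the corresponding block of $A$ is an arbitrary skew-symmetric matrix, contributing $\binom q2$ and $\binom{n-2m-q}2$. On a $2d$-dimensional $W_\alpha$ on which $F$ acts, in a suitable orthonormal basis, as $\begin{bsmallmatrix}\cos\alpha\,I_d&\sin\alpha\,I_d\\-\sin\alpha\,I_d&\cos\alpha\,I_d\end{bsmallmatrix}$ with $\sin\alpha\neq0$, writing $A=\begin{bsmallmatrix}P&S\\-S^\tp&T\end{bsmallmatrix}$ with $P,T$ skew-symmetric and equating the two products forces $T=P$ and $S=S^\tp$; thus such $A$ form the space $\bigl\{\begin{bsmallmatrix}P&S\\-S&P\end{bsmallmatrix}:P\in\mathsf{\Lambda}^2(\mathbb{R}^d),\ S\in\mathsf{S}^2(\mathbb{R}^d)\bigr\}$, of dimension $\binom d2+\binom{d+1}2=d^2$. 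Taking $d=m_0,p_1,\dots,p_s,q_1,\dots,q_s$ yields the contributions $m_0^2$ and $p_j^2,q_j^2$, and adding everything up gives the claimed formula. I expect the only delicate step to be establishing the pairwise distinctness of the rotation angles of $F$ --- equivalently, that no two of its rotation blocks and $\pm1$-eigenspaces share an eigenvalue --- which is precisely where the strict chain $0<\sigma_s<\cdots<\sigma_1<1$ from Lemma~\ref{lem:orbit} enters; once block-diagonality of the commutant is secured, the remaining computations are routine linear algebra.
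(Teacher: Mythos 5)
Your proposal is correct and follows essentially the same route as the paper: both reduce to computing the Lie algebra of the isotropy group of $F$, kill the off-diagonal blocks of a commuting skew-symmetric matrix using the spectral disjointness of the blocks of $F$ (the paper via the Sylvester-equation theorem, you via eigenspace invariance of a normal matrix), and then count $\binom{q}{2}$, $\binom{n-2m-q}{2}$, and $d^2$ per rotation block. Your explicit $T=P$, $S=S^\tp$ computation just fills in the ``direct calculation'' the paper leaves to the reader.
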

\begin{proof}
It suffices to determine the dimension of the Lie algebra $\mathfrak{g}_F$ of the isotropy group of $F$ in \eqref{lem:orbit:eq1}. Partition $F$ as $F = \diag(Q_0,\dots,  Q_{2s+2})$ where 
\[
Q_j =  \begin{cases}
\begin{bsmallmatrix}
0  &  I_{m_0}     \\
 -  I_{m_ 0}  & 0 \\
\end{bsmallmatrix} &\text{if } j = 0, \\[2ex]
\begin{bsmallmatrix}
\sqrt{\smash[b]{1-\sigma_i^2}} I_{p_i} & \sigma_i I_{p_i}     \\
 - \sigma_i I_{p_i}  &  \sqrt{\smash[b]{1-\sigma_i^2}} I_{p_i} \\
\end{bsmallmatrix} &\text{if }1 \le j \le 2s \text{~and~} j=2i-1, \\[3ex]
\begin{bsmallmatrix}
-\sqrt{\smash[b]{1-\sigma_i^2}} I_{q_i} & \sigma_i I_{q_i}  \\
 - \sigma_i I_{q_i}  & - \sqrt{\smash[b]{1-\sigma_i^2}} I_{q_i} \\
\end{bsmallmatrix} &\text{if }1 \le j \le 2s \text{~and~} j=2i, \\[2ex]
I_q &\text{if } j = 2s+1, \\[0.5ex]
-I_{n-2m-q} &\text{if } j = 2s+2.  \\
\end{cases}
\]
Let $X \in \mathsf{\Lambda}^2(\mathbb{R}^n)$ be such that $X F - F X =0$.  
We partition $X = (X_{ij})_{i,j=0}^{2s+2}$ accordingly so that
\begin{equation}\label{prop:orb-dim:eq2}
X_{ij} Q_j - Q_i X_{ij} = 0,\quad i,  j = 0,\dots, 2s+2.
\end{equation} 
If $i \ne j$,  then $\sigma(Q_i) \cap \sigma(Q_j) = \varnothing$. Thus the Sylvester equation \eqref{prop:orb-dim:eq2} gives $X_{ij} = 0$ \cite[Theorem 2.4.4.1]{HJ} whenever $ i \ne j$ and  $X$ is a block diagonal skew-symmetric matrix.  It remains to consider the diagonal blocks:
\begin{equation}\label{prop:orb-dim:eq3}
X_{jj} Q_j - Q_j X_{jj} = 0,\quad j = 0,\dots, 2s+2.
\end{equation} 
For $j = 2s +1$,  the solution space of \eqref{prop:orb-dim:eq3} has dimension $\binom{q}{2}$; for $j = 2s + 2$, it has dimension $\binom{n-2m-q}{2}$.  For $ j=0, \dots, 2s$,  \eqref{prop:orb-dim:eq3} reduces to
\begin{equation}\label{prop:orb-dim:eq4}
\begin{bmatrix}
b I_p & a I_p      \\
 - a I_p  &  b I_p  \\
\end{bmatrix} Y - Y
\begin{bmatrix}
b I_p & a I_p      \\
 - a I_p  &  b I_p  \\
\end{bmatrix} = 0
\end{equation} 
for some $a,b\in \mathbb{R}$ with $0 < a \le 1$ and $a^2 + b^2 =1$.  A direct calculation shows that the solution space of \eqref{prop:orb-dim:eq4} has dimension $p^2$.  Hence
\[
\dim \mathfrak{g}_F = m_0^2 + \sum_{j=1}^s (p_j^2 + q_j^2) + \binom{q}{2} + \binom{n - 2m -q}{2}.
\]
\end{proof}

The orbit $\Orb_{k'}(\Sigma,  E)$ is almost uniquely labeled by $\Sigma$ and $E$ in the following sense.
\begin{corollary}\label{cor:orb-dim-1}
$\Orb_{k'}(\Sigma,  E)  =  \Orb_{k'}(\Sigma',  E')$ 
if and only if there is a permutation matrix $P \in \mathbb{R}^{k \times k}$ such that $\Sigma' = P\Sigma P^\tp$ and $E' = P  E P^\tp$.
\end{corollary}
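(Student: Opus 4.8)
The plan is to exploit that $\Orb_{k'}(\Sigma,E) = \Orb_{k'}(\Sigma',E')$ if and only if the ``central'' representative
\[
M(\Sigma,E) \coloneqq \begin{bmatrix} \sqrt{I_k-\Sigma^2}\,E & \Sigma & 0 \\ -\Sigma & \sqrt{I_k-\Sigma^2}\,E & 0 \\ 0 & 0 & I_{n-k'-k,\,k'-k}\end{bmatrix}
\]
that appears in the definition of $\Orb_{k'}(\Sigma,E)$, together with its counterpart $M(\Sigma',E')$, are $\O(n)$-conjugate; one then recovers the pair $(\Sigma,E)$ from a conjugacy invariant of $M(\Sigma,E)$, namely its characteristic polynomial.

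For the ``if'' direction, suppose $P$ is a permutation matrix with $\Sigma'=P\Sigma P^\tp$ and $E'=PEP^\tp$. Then $\diag(P,P,I_{n-2k}) \in \O(n)$ conjugates $M(\Sigma,E)$ to $M(\Sigma',E')$; the only identity to check is $P\sqrt{I_k-\Sigma^2}\,E\,P^\tp = \sqrt{I_k-\Sigma'^2}\,E'$, which holds because conjugation by a permutation matrix permutes the diagonal of a diagonal matrix and commutes with the entrywise map $x \mapsto \sqrt{1-x^2}$, while $I_{n-k'-k,k'-k}$ is fixed by $\diag(P,P,I_{n-2k})$. Hence $M(\Sigma',E') \in \Orb_{k'}(\Sigma,E)$, and since distinct $\O(n)$-orbits are disjoint, the two orbits coincide.

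For the ``only if'' direction, suppose the two orbits are equal. Taking $Q=I$ in the orbit definitions, $M(\Sigma,E)$ and $M(\Sigma',E')$ both lie in the common orbit, hence are $\O(n)$-conjugate and so share a characteristic polynomial. Writing $\Sigma = \diag(\sigma_1,\dots,\sigma_k)$, $E = \diag(\varepsilon_1,\dots,\varepsilon_k)$ and $a_i \coloneqq \varepsilon_i\sqrt{1-\sigma_i^2}$, a simultaneous permutation of rows and columns turns the upper-left $2k \times 2k$ block of $M(\Sigma,E)$ into a direct sum of blocks $\begin{bsmallmatrix} a_i & \sigma_i \\ -\sigma_i & a_i \end{bsmallmatrix}$, so
\[
\det\bigl(tI_n - M(\Sigma,E)\bigr) = (t-1)^{n-k'-k}(t+1)^{k'-k}\prod_{i=1}^{k}\bigl(t^2 - 2a_i t + 1\bigr).
\]
Equating with the analogous expression for $(\Sigma',E')$ and cancelling the common factor $(t-1)^{n-k'-k}(t+1)^{k'-k}$, which is legitimate because $n, k, k'$ agree for both orbits, gives $\prod_{i=1}^k (t^2 - 2a_i t + 1) = \prod_{j=1}^k (t^2 - 2a'_j t + 1)$ in $\mathbb{R}[t]$. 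Each factor $t^2 - 2a t + 1$ with $|a| \le 1$ is either an irreducible quadratic (when $|a| < 1$) or $(t \mp 1)^2$ (when $a = \pm 1$), so unique factorization in $\mathbb{R}[t]$ forces $\{a_1,\dots,a_k\} = \{a'_1,\dots,a'_k\}$ as multisets. Since $(\sigma,\varepsilon) \mapsto \varepsilon\sqrt{1-\sigma^2}$ is injective on $[0,1) \times \{\pm 1\}$ and sends precisely the pairs with $\sigma = 1$ to $0$ (where $\varepsilon$ does not enter $M(\Sigma,E)$ at all, so we normalize $\varepsilon = 1$), the multiset $\{a_i\}$ recovers the multiset $\{(\sigma_i,\varepsilon_i)\}$, and any permutation $P$ realizing this equality satisfies $\Sigma' = P\Sigma P^\tp$ and $E' = PEP^\tp$.

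The step I expect to be the main obstacle is establishing that the characteristic polynomial is a \emph{complete} invariant of the orbit, i.e. the last chain of implications above. Two bookkeeping points need care: the eigenvalues $\pm 1$ contributed by coupled blocks with $\sigma_i = 0$ must be separated from those contributed by the fixed block $I_{n-k'-k,k'-k}$, which is possible only because the multiplicities $n-k'-k$ and $k'-k$ are the same constants on both sides; and the eigenvalues $\pm\ii$ arise only from blocks with $\sigma_i = 1$, for which $E$ is genuinely irrelevant. Once these are accounted for, the remainder is a formal consequence of unique factorization in $\mathbb{R}[t]$.
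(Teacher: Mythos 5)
Your proof is correct and follows essentially the same route as the paper's: both reduce orbit equality to equality of spectra (equivalently, of characteristic polynomials) of the central representatives and then recover $(\Sigma,E)$ up to simultaneous permutation. Two minor points in your favor: your argument needs only the easy implication (conjugate $\Rightarrow$ equal characteristic polynomial), since the loop is closed by your explicit construction of the conjugating matrix $\diag(P,P,I_{n-2k})$ in the \emph{if} direction, whereas the paper also invokes the converse that any orthogonal matrix with the prescribed eigenvalues lies in the orbit; and your normalization $\varepsilon_i=1$ whenever $\sigma_i=1$ explicitly patches the degenerate case in which $E$ does not affect $M(\Sigma,E)$, a caveat the paper acknowledges only informally by saying the orbit is ``almost'' uniquely labeled.
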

\begin{proof}
By Lemma~\ref{lem:orbit} and the proof of Proposition~\ref{prop:orb-dim},  an element in $\Orb_{k'}( \Sigma,  E) $ has eigenvalues 
\[
\ii,\quad -\ii, \quad 1, \quad -1,\quad \sqrt{\smash[b]{1- \sigma_j^2}} \pm \ii \sigma_j,\quad -\sqrt{\smash[b]{1- \sigma_j^2}} \pm \ii \sigma_j 
\]
with multiplicities $m_{0}$, $m_{0}$, $q$, $n-2m - q$, $p_j$, $q_j$ respectively, and where $j=1,\dots, s$.  Conversely,  any orthogonal matrix with these eigenvalues must be an element of $\Orb_{k'}( \Sigma,  E)$.  Therefore
\[
\Orb_{k'}(\Sigma,  E) \cap \Orb_{k'}(\Sigma',  E') \ne \varnothing
\]
if and only if $(\Sigma',  E')$ can be obtained from $(\Sigma,  E)$ by conjugating with the same permutation matrix.
\end{proof}

It follows from Corollary~\ref{cor:orb-dim-1} that if the diagonal matrix $\Sigma$ has distinct  diagonal entries, then it determines $\Orb_{k'}(\Sigma,  E)$ uniquely. When these distinct diagonal entries are all contained in $(0,1)$, the orbit $\Orb_{k'}(\Sigma,  E)$ turns out to be a flag manifold. Its dimension serves as an independent confirmation for our dimension formula in Proposition~\ref{prop:orb-dim} in this special case.
\begin{corollary}\label{cor:orb-dim-2}
Let $\Sigma, E$ be $k \times k$ diagonal matrices whose diagonal entries are in $(0,1)$ and $\{-1,1\}$  respectively. Let $k'\in \mathbb{N}$ be such that $k'\ge k$ and $k+k' \le n$. If the diagonal entries of $\Sigma$ are distinct, then
\[
\Orb_{k'}(\Sigma,  E)  \cong  \Flag(2,4,\dots,  2k,  n + k-k';\mathbb{R}^n). 
\]
In particular, 
\[
\dim \Orb_{k'} ( \Sigma,  E) = \binom{n}{2} - \biggl[ k + \binom{k'-k}{2} + \binom{n-k-k'}{2}
\biggr].
\]
\end{corollary}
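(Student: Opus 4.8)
The plan is to construct an explicit $\O(n)$-equivariant diffeomorphism from $\Orb_{k'}(\Sigma,E)$ onto the flag manifold and then read off the dimension. First I would record the spectral normal form of the orbit in this special case. Specializing Lemma~\ref{lem:orbit} to a $\Sigma$ whose diagonal entries are distinct and all lie in $(0,1)$ forces $m_0=m_{s+1}=0$, $s=k$, $m_1=\dots=m_s=1$ (so $m=k$), and $q=n-k-k'$; combined with the eigenvalue list in the proof of Corollary~\ref{cor:orb-dim-1}, this shows that $\Orb_{k'}(\Sigma,E)$ is exactly the set of $Z\in\O(n)$ orthogonally similar to $Z_0:=\diag(R_{\theta_1},\dots,R_{\theta_k},I_{n-k-k'},-I_{k'-k})$, where $R_\theta$ is the planar rotation through $\theta$ and $0<\theta_1<\dots<\theta_k<\pi$ are the distinct angles with $\cos\theta_j=E_{jj}\sqrt{1-\Sigma_{jj}^2}$ and $\sin\theta_j=\Sigma_{jj}$; distinctness of the $\theta_j$ is precisely where the hypothesis that $\Sigma$ has distinct entries is used. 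Thus every $Z$ in the orbit carries the same fixed, pairwise-disjoint eigenvalue blocks: the simple conjugate pairs $e^{\pm\ii\theta_1},\dots,e^{\pm\ii\theta_k}$, together with $1$ of multiplicity $n-k-k'$ and $-1$ of multiplicity $k'-k$.

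Next I would define a map $\psi\colon\Orb_{k'}(\Sigma,E)\to\Flag(2,4,\dots,2k,n+k-k';\mathbb{R}^n)$. For $Z$ in the orbit and $1\le j\le k$, let $W_j(Z):=\bigl(\ker_{\mathbb{C}^n}(Z-e^{\ii\theta_j}I)\oplus\ker_{\mathbb{C}^n}(Z-e^{-\ii\theta_j}I)\bigr)\cap\mathbb{R}^n$, the real $Z$-invariant $2$-plane on which $Z$ acts as a rotation through $\theta_j$; since $\theta_j\in(0,\pi)$ there is a unique orientation of $W_j(Z)$ making this rotation positive, and we equip it with that orientation. Put $V_i(Z):=W_1(Z)\oplus\dots\oplus W_i(Z)$ (of dimension $2i$) for $i=1,\dots,k$ and $V_{k+1}(Z):=V_k(Z)\oplus\ker_{\mathbb{R}^n}(Z-I)$ (of dimension $2k+(n-k-k')=n+k-k'$), and set $\psi(Z):=(V_1(Z)\subset\dots\subset V_{k+1}(Z))$. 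Because the eigenvalues of $Z$ are constant and pairwise distinct along the orbit, the associated spectral projectors are regular functions of $Z$, so $\psi$ is smooth; and conjugation preserves rotation angles, so $W_j(QZQ^\tp)=QW_j(Z)$ with matching orientation and $\ker(QZQ^\tp-I)=Q\ker(Z-I)$, giving $\psi(QZQ^\tp)=Q\cdot\psi(Z)$ for all $Q\in\O(n)$.

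To finish, I would show $\psi$ is a bijection. Both $\Orb_{k'}(\Sigma,E)$ (by the definition of the orbit) and the flag manifold (every point admits a compatible oriented orthonormal basis) are single $\O(n)$-orbits, so the equivariant map $\psi$ is automatically onto. For injectivity I would compare isotropy at $Z_0$: its stabilizer under conjugation is the $\O(n)$-commutant of $Z_0$, which by the Sylvester-equation argument from the proof of Proposition~\ref{prop:orb-dim} — disjoint eigenvalue blocks force block-diagonality, and the $\O(2)$-commutant of a rotation $R_{\theta_j}$ with $\theta_j\ne0,\pi$ is $\SO(2)$ — equals $\SO(2)^k\times\O(n-k-k')\times\O(k'-k)$; while $\psi(Z_0)$ is the standard flag $\spn(e_1,e_2)\subset\dots\subset\spn(e_1,\dots,e_{n+k-k'})$, whose $\O(n)$-stabilizer is the block-diagonal subgroup with blocks of sizes $2,\dots,2,n-k-k',k'-k$ having positive determinant on the first $k$ blocks — again $\SO(2)^k\times\O(n-k-k')\times\O(k'-k)$. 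Equality of isotropy subgroups together with equivariance and surjectivity exhibits $\psi$ as the standard diffeomorphism between the two realizations $\O(n)/\Stab(Z_0)$ and $\O(n)/\Stab(\psi(Z_0))$, proving $\Orb_{k'}(\Sigma,E)\cong\Flag(2,4,\dots,2k,n+k-k';\mathbb{R}^n)$. The dimension count then follows either by specializing Proposition~\ref{prop:orb-dim} ($m_0=0$, $p_j^2+q_j^2=1$ since $p_j+q_j=1$, $q=n-k-k'$, $n-2m-q=k'-k$) or directly as $\dim\O(n)-\dim\Stab=\binom n2-k-\binom{n-k-k'}2-\binom{k'-k}2$.

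The point that needs genuine care — and the main obstacle — is the orientation bookkeeping. Each $W_j(Z)$ must be carried as an \emph{oriented} plane; the fact that the nonzero rotation angle $\theta_j$ lies strictly inside $(0,\pi)$ is exactly what makes that orientation canonical, and it is the reason the isotropy group contains $\SO(2)$ factors rather than $\O(2)$ factors. Consequently the relevant $\Flag(2,4,\dots,2k,n+k-k';\mathbb{R}^n)$ is the flag manifold that records orientations on the even-dimensional steps $V_1,\dots,V_k$, and reconciling its point-stabilizer with the commutant of $Z_0$ is the crux; everything else — well-definedness and smoothness of $\psi$, transitivity of the two actions — is routine once the normal form of the orbit from Lemma~\ref{lem:orbit} and Corollary~\ref{cor:orb-dim-1} is in hand.
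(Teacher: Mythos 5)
Your proposal is correct and follows essentially the same route as the paper: specialize the normal form $F$ of Lemma~\ref{lem:orbit}, compute its isotropy group under $\O(n)$-conjugation via the block/commutant argument of Proposition~\ref{prop:orb-dim}, and realize the orbit as the resulting homogeneous space, i.e.\ a flag manifold, from which the dimension formula follows. You are in fact more careful than the paper on one point: since a reflection conjugates a planar rotation $R_{\theta_j}$ with $\theta_j\in(0,\pi)$ to $R_{-\theta_j}$, the isotropy group is $\SO(2)^k\times\O(n-k-k')\times\O(k'-k)$ as you say (so the target must be the flag manifold with oriented $2$-dimensional steps), whereas the paper writes $\O(2)^k\times\O(n-k-k')\times\O(k'-k)$; this discrepancy is invisible at the Lie-algebra level and does not affect the dimension count, and your explicit equivariant oriented-flag map $\psi$ supplies the isomorphism that the paper only asserts.
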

\begin{proof}
For $1 > \sigma_1 > \cdots > \sigma_k > 0$, it follows from the proof of Proposition~\ref{prop:orb-dim} that the isotropy group of $F$ under the conjugation of $\O(n)$ is
\[
\underbrace{\O(2) \times \cdots \times \O(2)}_{k\text{ copies}} \times \O(n-k-k') \times \O(k'-k).
\] 
Therefore $\dim \Orb_{k'} ( \Sigma,  E) \cong \O(n)/ ( \O(2) \times \cdots \times \O(2) \times \O(n-k-k') \times \O(k'-k )$. Its dimension is then a consequence of the dimension of a flag manifold \cite{ZLK22}.
\end{proof}
We now arrive at the main result of this section.
\begin{theorem}[Special orthogonal group as product of two Grassmannians]\label{thm:prod-dim}
Let $k, k', n \in \mathbb{N}$ with $k' \le k < n$ and $k + k' \le n$.  Then
\begin{equation}\label{eq:dim1}
\dim \Phi(k, k', \mathbb{R}^n) = k(n-k) + k'(n-k') - k'
\end{equation}
and
\[
\Phi(k,k', \mathbb{R}^n) = \begin{cases}
\SO(n) &\text{if }k = k' \text{ and } n = 2k \text{ or } 2k + 1,  \\
\SO^\m (n) &\text{if }k = k'+1 \text{ and } n = 2k - 1 \text{ or } 2k.
\end{cases}
\]
\end{theorem}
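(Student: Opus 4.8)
The statement comprises the dimension formula \eqref{eq:dim1} and the identification of $\Phi(k,k',\mathbb{R}^n)$ with $\SO(n)$ or $\SO^\m(n)$; I would treat these in turn.

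\emph{Dimension.} By Theorem~\ref{thm:char}, $\Phi(k,k',\mathbb{R}^n)$ is the union of the $\O(n)$-conjugation orbits $\Orb_k(\Sigma,E)$ of Lemma~\ref{lem:orbit} (with the roles of $k$ and $k'$ interchanged, since here $k'\le k$ and $\Sigma$ has size $k'$) over all $k'\times k'$ diagonal $\Sigma$ with entries in $[0,1]$ and $E$ with entries in $\{-1,1\}$. I would group these orbits by combinatorial type $\tau$: the multiplicities $m_0,m_1,\dots,m_s,m_{s+1}$ of the entries of $\Sigma$ equal to $1$, lying in $(0,1)$ (collected by equal value), and equal to $0$; the splittings $p_j+q_j=m_j$; and the integer $q$, all as in Lemma~\ref{lem:orbit} and Proposition~\ref{prop:orb-dim}. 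For a fixed type the $s$ distinct values in $(0,1)$ range over an $s$-dimensional region, the resulting orbits are pairwise disjoint by Corollary~\ref{cor:orb-dim-1}, and all have the same dimension by Proposition~\ref{prop:orb-dim}; hence the locus of type $\tau$ has dimension $s+\dim\Orb_k(\Sigma,E)$ and
\[
\dim\Phi(k,k',\mathbb{R}^n)=\max_\tau\Bigl(s+\binom n2-m_0^2-\sum_{j=1}^s(p_j^2+q_j^2)-\binom q2-\binom{n-2m-q}2\Bigr).
\]
The crux is that this maximum is attained at the generic type, where $\Sigma$ has $k'$ distinct entries all in $(0,1)$. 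From $p_j^2+q_j^2\ge p_j+q_j=m_j$ and $\sum_{j=1}^s m_j=m-m_0\ge s$ one gets $s-m_0^2-\sum_j(p_j^2+q_j^2)\le 0$, with equality forcing $m_0=0$ and every $m_j=1$; under these constraints the admissible range of $q$ in Lemma~\ref{lem:orbit} gives $q\ge n-k-k'\ge0$ and $n-2m-q\ge k-k'\ge0$, so monotonicity of $\binom{\cdot}{2}$ on $\mathbb{Z}_{\ge0}$ yields $\binom q2+\binom{n-2m-q}2\ge\binom{n-k-k'}2+\binom{k-k'}2$, with all equalities holding exactly at the generic type. Thus $\dim\Phi(k,k',\mathbb{R}^n)=\binom n2-\binom{n-k-k'}2-\binom{k-k'}2$, which a routine expansion identifies with $k(n-k)+k'(n-k')-k'$, proving \eqref{eq:dim1}. (Alternatively, the generic orbit dimension is read off directly from Corollary~\ref{cor:orb-dim-2}.)

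\emph{Identification.} Every point of $\Gr(j,\mathbb{R}^n)$ is $\O(n)$-conjugate to $I_{j,n-j}$, hence has determinant $(-1)^{n-j}$; so every element of $\Phi(k,k',\mathbb{R}^n)$ has determinant $(-1)^{(n-k)+(n-k')}=(-1)^{k+k'}$, giving $\Phi(k,k',\mathbb{R}^n)\subseteq\SO(n)$ when $k=k'$ and $\Phi(k,k',\mathbb{R}^n)\subseteq\SO^\m(n)$ when $k=k'+1$. For the reverse inclusions I would use that $\Phi(k,k',\mathbb{R}^n)$ is invariant under $\O(n)$-conjugation (Lemma~\ref{lem:conditions}), so it suffices to realize the real canonical form of an arbitrary element as the matrix of Theorem~\ref{thm:char}. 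A one-line computation shows that the $2\times2$ block $\begin{bsmallmatrix}\varepsilon\sqrt{1-\sigma^2}&\sigma\\-\sigma&\varepsilon\sqrt{1-\sigma^2}\end{bsmallmatrix}$ has eigenvalues $\varepsilon\sqrt{1-\sigma^2}\pm\ii\sigma$, so as $\sigma$ ranges over $[0,1]$ and $\varepsilon$ over $\{-1,1\}$ it sweeps out every $2\times2$ rotation, $I_2$ and $-I_2$ included. After a permutation the matrix of Theorem~\ref{thm:char} is block-diagonal with $k'$ such blocks followed by $I_{n-k-k',\,k-k'}$; hence, up to $\O(n)$-conjugacy, these matrices are exactly those conjugate to a direct sum of $k'$ arbitrary planar rotations together with $n-k-k'$ forced eigenvalues $+1$ and $k-k'$ forced eigenvalues $-1$. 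In each of the four cases $n-k-k'\in\{0,1\}$ and $k-k'\in\{0,1\}$, and one checks with the parity constraints on the multiplicities of $\pm1$ in $\SO(n)$ and $\SO^\m(n)$ that these data suffice to represent every conjugacy class: for instance an element of $\SO(2k)$ has evenly many $-1$ eigenvalues, hence evenly many $+1$ eigenvalues, so its canonical form consists of $k$ rotation blocks; the other three cases are analogous, after first splitting off one $+1$, or one $-1$, or one of each. This yields the reverse inclusions and finishes the proof.

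\emph{Main obstacle.} The real work is the dimension part, namely verifying that the generic combinatorial type maximizes $s+\dim\Orb_k(\Sigma,E)$; this hinges on correctly pinning down the admissible range of $q$ (equivalently, the size of the zero-block of $\Sigma$) from Lemma~\ref{lem:orbit}, and on the boundary cases $k=k'$ and $k+k'=n$. Once the planar-rotation observation is in hand, the identification is just bookkeeping with real canonical forms.
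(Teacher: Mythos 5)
Your proposal is correct and follows essentially the same route as the paper: stratify $\Phi(k,k',\mathbb{R}^n)$ into the orbits of Lemma~\ref{lem:orbit}, identify the generic stratum's dimension via Proposition~\ref{prop:orb-dim}/Corollary~\ref{cor:orb-dim-2} plus the $k'$ parameters of $\Sigma$, and then verify the equalities with $\SO(n)$ or $\SO^\m(n)$ by matching the canonical form of Theorem~\ref{thm:char} against the real Schur form. Your two small refinements---the explicit maximization over combinatorial types (where the paper simply asserts the generic stratum contains an open subset) and the determinant computation $\det = (-1)^{k+k'}$ in place of the paper's connectedness argument for placing $\Phi$ in the correct component---are both sound and, if anything, make the argument more self-contained.
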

\begin{proof}
By Theorem~\ref{thm:char},  we have 
\[
\Phi(k,  k', \mathbb{R}^n) = \bigcup_{(\Sigma, E) \in S} \Orb_k  (\Sigma, E) 
\]
where $S$ consists of pairs $(\Sigma, E)$ of $k' \times k'$ diagonal matrices whose diagonal entries are respectively in $[0,1]$ and $\{-1,1\}$. Since $k + k' \le n$,  by Corollary~\ref{cor:orb-dim-2},
\[
\dim \Orb_k  (\Sigma,  E) = \binom{n}{2} - \biggl[ k' + \binom{n-k- k'}{2} + \binom{k - k'}{2} \biggr].
\] 
Let $S'$ be the subset of $S$ consisting of $(\Sigma, E)$ with diagonal entries of $\Sigma$ are in $(0,1)$ and distinct. Then it is clear that 
\[
\bigcup_{(\Sigma, E) \in S’} \Orb_k  (\Sigma, E)
\]
contains an open subset of $\Phi(k,  k', \mathbb{R}^n) $. So by Corollary~\ref{cor:orb-dim-1},
\[
\dim \Phi(k,  k', \mathbb{R}^n) = k' + \binom{n}{2} - \biggl[ k' + \binom{n-(k + k')}{2} + \binom{k - k'}{2} \biggr],
\]
giving us \eqref{eq:dim1}. This additivity of dimension is an analogue of the fiber-dimension formula \cite[Corollary~11.13]{harris1992Algebraic} or quotient manifold theorem \cite[Theorem~21.10]{Lee03}.
Since $\O(n)$ has two connected components $\SO(n)$ and $\SO^\m (n)$,  each of dimension $\binom{n}{2}$, and $\Phi(k,  k', \mathbb{R}^n)$ is connected, we must have $\Phi(k,  k', \mathbb{R}^n) \subsetneq \SO(n)$ or $\Phi(k,  k', \mathbb{R}^n) \subsetneq \SO^\m(n)$ unless $k - k'$ and $n - k - k' \in \{0,1\}$.  If $k = k' $ and $n - 2k \in \{0,1\}$, then $I \in \Phi(k,k',\mathbb{R}^n)$ by Theorem~\ref{thm:char}.  This shows that $\Phi(k',k',\mathbb{R}^n) \subseteq \SO(n)$. By applying the real Schur decomposition \cite[Section~B.5]{Higham} to orthogonal matrices, Theorem~\ref{thm:char} implies that the equality must hold. If $k = k' + 1$ and $n - 2k + 1\in \{0,1\}$,  then $I_{n-1,1} \in \Phi(k,k',\mathbb{R}^n)$ by Theorem~\ref{thm:char} and so $\Phi(k,k',\mathbb{R}^n) = \SO^\m (n)$.  
\end{proof}
As a sanity check, for $k'=0$, we have $\dim \Phi(k, 0, \mathbb{R}^n) = k (n-k) = \dim \Gr(k,\mathbb{R}^n)$ as expected.

\section{Product of two complex Grassmannians}\label{sec:dim-C}

Here we discuss the complex analog of Section~\ref{sec:dim} in preparation for the next section. The main difference is that in this case, the product of two complex Grassmannians, $\Phi(k,k',\mathbb{C}^n)$ will not give us $\SU(n)$ regardless of the values of $k$ and $k'$. However the complex equivalent of several results in Section~\ref{sec:dim} still hold true with slight variation, such as the canonical form in Theorem~\ref{thm:char}, whose complex analog we prove below. Corollary~\ref{cor:char} also holds verbatim with $\mathbb{C}^n$ in place of  $\mathbb{R}^n$.
\begin{theorem}[Canonical form for product of two complex Grassmannians]\label{thm:char_C_M2}
Let $k, k', n \in \mathbb{N}$ with $k' \le k < n$ and $k + k' \le n$. Then $Z  \in \Phi(k,  k', \mathbb{C}^n)$ if and only if there exist $Q  \in \U(n)$ and a diagonal matrix $D = \diag( e^{\ii \theta_1},\dots, e^{\ii \theta_{k'}})$ with $\theta_j \in [0,\pi]$ such that 
\[ Z = 
Q \begin{bmatrix}
D  & 0 & 0  \\
0  &  \overline{D} & 0 \\
0 & 0 & I_{n-k - k',  k - k'}
\end{bmatrix} Q^\h .\]
\end{theorem}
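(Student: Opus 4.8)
The plan is to mirror the strategy that produced Theorem~\ref{thm:char}, but working over $\mathbb C$ with the Hermitian adjoint in place of the transpose. First I would record the complex analogue of Lemma~\ref{lem:conditions}: for $Z\in\U(n)$, one has $Z\in\Phi(k,k',\mathbb C^n)$ iff $YZY=Z^\h$ and $\tr(ZY)=2k-n$ for some $Y\in\Gr(k',\mathbb C^n)$. This is literally the same argument — if $Z=XY$ then $ZY=X$, and conversely $(ZY)(ZY)^\h=I=(ZY)^2$ forces $ZY$ to be a Hermitian involution with the right trace, hence a point of $\Gr(k,\mathbb C^n)$ via the Hermitian version of \eqref{eq:equiv}. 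By conjugating with a suitable $R\in\U(n)$ we may take $Y=RI_{k',n-k'}R^\h$, so it suffices to classify $Y'\coloneqq R^\h ZR\in\U(n)$ satisfying $I_{k',n-k'}Y'I_{k',n-k'}=Y'^\h$ together with the trace constraint.

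The next step is the complex analogue of Lemma~\ref{lem:char}: partitioning $Y'$ into blocks as in that proof, the relation $I_{k',n-k'}Y'I_{k',n-k'}=Y'^\h$ forces $Y_1$ and $Y_4$ to be Hermitian and $Y_2=-Y_3^\h$, while $Y'^\h Y'=I$ gives the three identities $Y_1^2+Y_2Y_2^\h=I$, $-Y_1Y_2+Y_2Y_4=0$, $Y_2^\h Y_2+Y_4^2=I$. Taking an SVD $Y_2=U[\Sigma,0]V^\h$ with $U\in\U(k')$, $V\in\U(n-k')$ and $\Sigma$ real with entries in $[0,1]$, one gets $Y_1=U(\sqrt{I-\Sigma^2}\,E_1)U^\h$ and $Y_4=V\diag(\sqrt{I-\Sigma^2}\,E_2,E_3)V^\h$ — but now, crucially, $E_1,E_2,E_3$ are \emph{Hermitian} involutions commuting with $\Sigma$, hence real diagonal with $\pm1$ entries only on the blocks where $\Sigma$ is not forced, subject to $\Sigma\sqrt{I-\Sigma^2}(E_1-E_2)=0$. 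This yields a block form like \eqref{lem:char:eq2}. Then, exactly as in the proof of Theorem~\ref{thm:char}, the sign ambiguities in $E_1,E_2,E_3$ can be eliminated: where $\sigma_i\in(0,1)$ the constraint forces $(E_1)_{ii}=(E_2)_{ii}$; where $\sigma_i=1$ the signs are invisible; where $\sigma_i=0$ the hypothesis $k+k'\le n$ lets one swap a sign into the $E_3$ block. So after a conjugation absorbing $\diag(U,V)$ into $Q$ we reach a block matrix of the form $\diag(\sqrt{I-\Sigma^2}E,\Sigma;-\Sigma,\sqrt{I-\Sigma^2}E)$ plus the $I_{n-k-k',k-k'}$ tail.

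The genuinely new ingredient — and the one place the proof departs from the real case — is recognizing that each $2\times2$ real block $\begin{bsmallmatrix}\sqrt{1-\sigma^2}\,\varepsilon & \sigma\\ -\sigma & \sqrt{1-\sigma^2}\,\varepsilon\end{bsmallmatrix}$ with $\varepsilon=\pm1$ is a rotation by some angle $\theta\in[0,\pi]$ (for $\varepsilon=+1$, $\theta\in[0,\pi/2]$; for $\varepsilon=-1$, $\theta\in[\pi/2,\pi]$), and over $\mathbb C$ every such rotation is unitarily diagonalizable with eigenvalues $e^{\pm\ii\theta}$ via the fixed change of basis $\tfrac1{\sqrt2}\begin{bsmallmatrix}1&1\\ \ii&-\ii\end{bsmallmatrix}$. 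Collecting these $2\times2$ unitaries into one block-diagonal unitary and folding it into $Q$ converts the whole $2k'\times2k'$ symplectic-rotation block into $\diag(D,\overline D)$ with $D=\diag(e^{\ii\theta_1},\dots,e^{\ii\theta_{k'}})$, $\theta_j\in[0,\pi]$; the $I_{n-k-k',k-k'}$ tail is untouched. For the converse one simply checks directly that any $Q\diag(D,\overline D,I_{n-k-k',k-k'})Q^\h$ has the required symmetry $YZY=Z^\h$ relative to $Y=Q\,I_{\cdots}\,Q^\h$ and the right trace $2k-n$, then invokes the first step; alternatively, factor $\diag(D,\overline D)$ explicitly as a product of two $2k'\times2k'$ Hermitian involutions block by block and pad with the involution corresponding to $I_{n-k-k',k-k'}$. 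I expect the main obstacle to be purely bookkeeping — tracking the three sign matrices $E_1,E_2,E_3$ and the degenerate $\sigma_i\in\{0,1\}$ cases carefully enough to justify reducing to a single $D$ — rather than anything conceptually hard; the reduction to rotations and their diagonalization is the only substantive idea beyond what Theorem~\ref{thm:char} already supplies.
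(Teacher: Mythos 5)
Your proposal is correct and takes essentially the same route as the paper: the paper's proof consists of repeating the Section~\ref{sec:dim} derivation to reach the block form with $\Sigma$ and $E$ and then ``unitarily diagonalizing the matrix in the middle,'' which are exactly the two steps you spell out in detail. The one genuinely new ingredient you identify --- diagonalizing each $2\times 2$ rotation block over $\mathbb{C}$ to produce $\diag(D,\overline{D})$ with $\theta_j\in[0,\pi]$ --- is precisely the paper's final step.
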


\begin{proof}
Following the same derivation in Section~\ref{sec:dim}, we arrive at
\[
Z = Q_0 \begin{bmatrix}
        \sqrt{I_{k'} - \Sigma^2} E  & \Sigma & 0  \\
        - \Sigma  &  \sqrt{I_{k'} - \Sigma'^2} E & 0 \\
        0 & 0 & I_{n-k - k',  k - k'}
        \end{bmatrix} Q_0^\h.
\]
Unitarily diagonalizing the matrix in the middle brings $Z$ into the required form.
\end{proof}

To deduce the (real) dimension of $\Phi(k,k',\mathbb{C}^n)$, we count the number of real parameters required to describe $\Phi(k,k', \mathbb{C}^n)$  following the same approach as in Section~\ref{sec:dim}. Let $k, k', n \in \mathbb{N}$ with $k' \le k \le n$ and $k+k' \le n$. For any unitary diagonal matrix $D \in \mathbb{R}^{k'\times k'}$,  we define the $\U(n)$-orbit
\[
\Orb_k (D) \coloneqq \left\lbrace
Q \begin{bmatrix}
D  & 0 & 0  \\
0  &  \overline{D} & 0 \\
0 & 0 & I_{n-k - k',  k - k'}
\end{bmatrix} Q^\h  \in \U(n):  Q\in  \U(n)
\right\rbrace.
\]
In this section, $\Orb$ will always denote $\U(n)$-orbit (as opposed to $\O(n)$-orbit in the last section).
\begin{proposition}\label{prop:orb_C_M2-dim}
    Let $k, k', n \in \mathbb{N}$ with $k' \le k \le n$ and $k+k' \le n$. Let $D\in \mathbb{C}^{k'\times k'}$ be the diagonal matrix
\[
D = \diag(e^{\ii\theta_0} I_{m_0}, \dots, e^{\ii\theta_{s+1}} I_{m_{s+1}})
\]
with $0 = \theta_0  <  \theta_1 < \cdots < \theta_s < \theta_{s+1} = \pi$, and $m_0+\dots+m_{s+1}=k'$.  Let $m \coloneqq m_1+\dots+m_s $.  Then 
    \[
    \Orb_k (D) = \lbrace
    Q F Q^\h : Q\in \U(n)
    \rbrace,
    \]
    for some 
\[
    F \coloneqq \diag(e^{\ii\theta_1} I_{m_1},\cdots, e^{\ii\theta_s} I_{m_s}, e^{-\ii\theta_1} I_{m_1},\cdots, e^{-\ii\theta_s} I_{m_s}, I_{q,n-2m-q}) \in \mathbb{C}^{n\times n}
\]
where $q \coloneqq n-k-k'+2m_0 \in \mathbb{N} \cup \{0\}$. Furthermore
\[
\dim \Orb_k (D) = n^2 - \biggl[ 2 \sum_{j=1}^s m_j^2 + q^2 + (n-q-2m)^2 \biggr].
\]
\end{proposition}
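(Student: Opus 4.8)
The plan is to mirror the structure of Lemma~\ref{lem:orbit} and Proposition~\ref{prop:orb-dim}, but with the simplifications afforded by the complex setting: here the middle matrix in Theorem~\ref{thm:char_C_M2} is already unitarily diagonalizable with eigenvalues $e^{\pm\ii\theta_j}$ and $\pm 1$, so no Sylvester-equation gymnastics with $2\times 2$ rotation blocks is needed. First I would establish the claimed representative $F$. Start from the canonical form $\diag(D,\overline D, I_{n-k-k',k-k'})$ given by Theorem~\ref{thm:char_C_M2}, expand $D = \diag(e^{\ii\theta_0}I_{m_0},\dots,e^{\ii\theta_{s+1}}I_{m_{s+1}})$ with $\theta_0 = 0$, $\theta_{s+1}=\pi$, and conjugate by a permutation matrix to group together all blocks sharing a common eigenvalue. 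The eigenvalue $+1$ is contributed by the $e^{\ii\theta_0}I_{m_0}$ block of $D$, the $e^{-\ii\theta_0}I_{m_0}$ block of $\overline D$, and the $I_{n-k-k'}$ block, for a total multiplicity $m_0 + m_0 + (n-k-k') = q$; the eigenvalue $-1$ is contributed by the $e^{\ii\theta_{s+1}}I_{m_{s+1}}=-I_{m_{s+1}}$ block, the corresponding $\overline D$ block, and the $-I_{k-k'}$ block, with multiplicity $m_{s+1}+m_{s+1}+(k-k') = n-2m-q$ (using $m = m_1+\dots+m_s$ and $\sum_i m_i = k'$, so $2m_0 + 2m + 2m_{s+1} + (n-k-k') + (k-k') = n$ forces this count). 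The remaining eigenvalues $e^{\pm\ii\theta_j}$, $j=1,\dots,s$, have multiplicity $m_j$ each. This yields exactly $F$ as displayed, and since conjugation by a permutation is a unitary conjugation, $\Orb_k(D) = \{QFQ^\h : Q\in\U(n)\}$.

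Next I would compute the dimension. Since $\Orb_k(D)$ is the $\U(n)$-conjugation orbit of $F$, it is a homogeneous space and $\dim\Orb_k(D) = \dim\U(n) - \dim\Stab(F) = n^2 - \dim\mathfrak{g}_F$, where $\mathfrak{g}_F = \{X \in \mathfrak{u}(n) : XF = FX\}$ is the Lie algebra of the centralizer. Because $F$ is diagonal (after the permutation) with the $2s+2$ distinct eigenvalues $e^{\ii\theta_1},\dots,e^{\ii\theta_s}, e^{-\ii\theta_1},\dots,e^{-\ii\theta_s}, 1, -1$ of multiplicities $m_1,\dots,m_s,m_1,\dots,m_s,q,n-2m-q$, its centralizer in $\U(n)$ is the block-diagonal unitary group $\U(m_1)\times\dots\times\U(m_s)\times\U(m_1)\times\dots\times\U(m_s)\times\U(q)\times\U(n-2m-q)$. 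Concretely: writing $X$ in the corresponding block partition, the condition $XF = FX$ forces the off-diagonal block $X_{ab}$ to satisfy $\lambda_b X_{ab} = \lambda_a X_{ab}$ where $\lambda_a \ne \lambda_b$ are the distinct eigenvalues, hence $X_{ab}=0$; each diagonal block is then an arbitrary element of the corresponding $\mathfrak{u}(\cdot)$. Therefore $\dim\mathfrak{g}_F = 2\sum_{j=1}^s m_j^2 + q^2 + (n-2m-q)^2$, and subtracting from $n^2$ gives the stated formula.

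The only point requiring a little care — and the step I would flag as the main obstacle — is the bookkeeping verifying that the multiplicity of $-1$ is genuinely $n-2m-q$ and that $q = n-k-k'+2m_0$ lies in $\mathbb{N}\cup\{0\}$ as asserted; this follows from $q = 2m_0 + (n-k-k')$ together with $n-k-k'\ge 0$ (since $k+k'\le n$) and $m_0\ge 0$, and the complementary count $2m_{s+1} + (k-k') = n - 2m - q$ is automatically nonnegative as well. One should also note the degenerate cases where $m_0 = 0$ or $m_{s+1} = 0$ (i.e. $D$ has no eigenvalue equal to $1$ or to $-1$): the argument is unaffected since a $\U(0)$ factor contributes $0$ to the dimension. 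Everything else is the standard computation of the dimension of a conjugacy orbit in a compact Lie group, entirely parallel to — but strictly simpler than — the proof of Proposition~\ref{prop:orb-dim}.
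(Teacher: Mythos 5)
Your proof is correct and follows essentially the same route as the paper's: obtain $F$ by a permutation reordering of the diagonal of $\diag(D,\overline{D},I_{n-k-k',k-k'})$, then compute $\dim\mathfrak{g}_F$ by partitioning a skew-Hermitian $X$ into blocks matching the $2s+2$ distinct eigenvalues of $F$ and observing that the off-diagonal blocks vanish, leaving $\mathfrak{g}_F \cong \mathfrak{u}(m_1)^2\times\cdots\times\mathfrak{u}(m_s)^2\times\mathfrak{u}(q)\times\mathfrak{u}(n-2m-q)$. The extra multiplicity bookkeeping you supply (verifying $q=2m_0+(n-k-k')$ and $n-2m-q = 2m_{s+1}+(k-k')$, both nonnegative) is correct and makes explicit a step the paper leaves implicit.
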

\begin{proof}
The required $F$  follows from simply reordering the diagonal entries of $\diag(D, \overline{D}, I_{n-k-k',k-k'})$. This is the complex analog of Lemma~\ref{lem:orbit}. Also, as in Proposition~\ref{prop:orb-dim}, the required dimension follows from the dimension of the Lie algebra $\mathfrak{g}_F$ of the isotropy group of $F$:
\[
\mathfrak{g}_F = \{ X: XF = FX, \; X+X^\h =0 \}.
\]
Write $F$ as $F = \diag(F_1,\dots,  F_{2s+2})$ where 
\[
F_j =  \begin{cases}
e^{\ii\theta_j} I_{m_j} &\text{if }1 \le j \le s , \\[1ex]
e^{-\ii\theta_{j-s}} I_{m_{j-s}} &\text{if } s+1 \le j \le 2s, \\[1ex]
I_q &\text{if } j = 2s+1, \\[1ex]
-I_{n-2m-q} &\text{if } j = 2s+2,
\end{cases}
\]
and partition $X = (X_{ij})_{i,j=1}^{2s+2}$ accordingly. Note that $F_i$ and $F_j$ have distinct eigenvalues whenever $i\ne j$. By the same argument in the proof of Lemma~\ref{lem:orbit}, we have $X_{ij} = 0$ whenever $i\ne j$ and $X_{ii} + X_{ii}^\h =0$, $i,j =1,\dots, 2s+2$. Hence
\[
\dim \mathfrak{g}_F =  2\sum_{j=1}^s m_j^2 + q^2 + (n - 2m -q)^2.
\]
\end{proof}
We may now deduce the complex analog of \eqref{eq:dim1}.
\begin{theorem}\label{thm:prod_C_M2-dim}
Let $k, k', n \in \mathbb{N}$ with $k' \le k <n$ and $k+k'\le n$. Then
\[
\dim \Phi(k,k',\mathbb{C}^n) =2k(n-k) + 2k'(n-k') - k'.
\]
\end{theorem}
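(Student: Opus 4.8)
The plan is to mirror the proof of Theorem~\ref{thm:prod-dim} line by line, with Theorem~\ref{thm:char_C_M2} and Proposition~\ref{prop:orb_C_M2-dim} taking the places of Theorem~\ref{thm:char} and Proposition~\ref{prop:orb-dim}. First, by Theorem~\ref{thm:char_C_M2} we have
\[
\Phi(k,k',\mathbb{C}^n) = \bigcup_{D \in S} \Orb_k(D),
\]
where $S$ is the set of unitary diagonal matrices $D = \diag(e^{\ii\theta_1},\dots,e^{\ii\theta_{k'}})$ with $\theta_1,\dots,\theta_{k'} \in [0,\pi]$. Let $S'\subseteq S$ consist of those $D$ whose angles $\theta_j$ are pairwise distinct and all lie in the open interval $(0,\pi)$; modulo permutations of the diagonal entries, $S'$ is an open subset of $\mathbb{R}^{k'}$, hence has real dimension $k'$. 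Since the condition defining $S'$ is open in the canonical form of Theorem~\ref{thm:char_C_M2}, the union $\bigcup_{D\in S'}\Orb_k(D)$ contains an open subset of $\Phi(k,k',\mathbb{C}^n)$.

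Next I would evaluate $\dim\Orb_k(D)$ for $D\in S'$ using Proposition~\ref{prop:orb_C_M2-dim}: here $s = k'$, all $m_j = 1$, and $m_0 = m_{s+1} = 0$, so $m = k'$ and $q = n-k-k'$, which gives
\[
\dim\Orb_k(D) = n^2 - \bigl[\, 2k' + (n-k-k')^2 + (k-k')^2 \,\bigr].
\]
I then need the complex analog of Corollary~\ref{cor:orb-dim-1}: for $D\in S'$, the matrix in the canonical form has eigenvalues $e^{\pm\ii\theta_1},\dots,e^{\pm\ii\theta_{k'}}$, each simple, together with $1$ of multiplicity $n-k-k'$ and $-1$ of multiplicity $k-k'$, so $\Orb_k(D) = \Orb_k(D')$ for $D,D'\in S'$ if and only if $D'$ is obtained from $D$ by permuting the diagonal. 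Consequently the parametrization $(D,Q)\mapsto Q\,\diag(D,\overline D,I_{n-k-k',k-k'})\,Q^\h$ of $\bigcup_{D\in S'}\Orb_k(D)$ is generically finite-to-one, and by the same additivity-of-dimension argument as in Theorem~\ref{thm:prod-dim} (the fiber-dimension formula \cite[Corollary~11.13]{harris1992Algebraic} or the quotient manifold theorem \cite[Theorem~21.10]{Lee03}) we obtain
\[
\dim\Phi(k,k',\mathbb{C}^n) = k' + \dim\Orb_k(D) = n^2 - k' - (n-k-k')^2 - (k-k')^2.
\]

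Finally, expanding $(n-k-k')^2$ and $(k-k')^2$ and collecting terms turns the right-hand side into $2k(n-k) + 2k'(n-k') - k'$, which is the claimed formula; as a sanity check, setting $k'=0$ recovers $2k(n-k) = \dim\Gr(k,\mathbb{C}^n)$. The only nonroutine step is the additivity of dimension, exactly as in the real case: one must verify that the generic orbits sweep out an open subset of $\Phi(k,k',\mathbb{C}^n)$ and that the above parametrization has, over that open set, fibers of constant dimension $n^2 - \dim\Orb_k(D)$. The eigenvalue bookkeeping, the substitution from Proposition~\ref{prop:orb_C_M2-dim}, and the closing algebra are all straightforward.
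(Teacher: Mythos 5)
Your proposal is correct and follows essentially the same route as the paper: the paper's proof of Theorem~\ref{thm:prod_C_M2-dim} likewise decomposes $\Phi(k,k',\mathbb{C}^n)$ into $\U(n)$-orbits via Theorem~\ref{thm:char_C_M2}, restricts to the open dense stratum with distinct angles in $(0,\pi)$, applies Proposition~\ref{prop:orb_C_M2-dim} to get the orbit dimension $n^2 - 2k' - (n-k-k')^2 - (k-k')^2$, and adds $k'$ for the parameter space. Your explicit bookkeeping of $s$, $m_j$, $m$, $q$ and the complex analogue of Corollary~\ref{cor:orb-dim-1} spells out details the paper compresses into ``Following the line of argument in the proof of Theorem~\ref{thm:prod-dim},'' but the argument is the same.
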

\begin{proof}
    By Theorem~\ref{thm:char_C_M2}, we have
\[
\Phi(k,k',\mathbb{C}^n) = \bigcup_{D \in S} \Orb_k  (D)
\]
where $S=\{ \diag(e^{\ii\theta_1},\dots, e^{\ii\theta_{k'}}) :  \theta_1,\dots,\theta_{k'} \in [0,\pi] \}$.   Following the line of argument in the proof of Theorem~\ref{thm:prod-dim}, we consider the subset $S' = \{\diag(e^{\ii\theta_1},\dots, e^{\ii\theta_{k'}}) :  \theta_1,\dots,\theta_{k'} \in (0,\pi)$ distinct$\}$.  The union of  $\Orb_k(D)$ over all $D \in S'$ is open and each $\Orb_k(D)$ has dimension $n^2 - 2k' - (n-k-k')^2 - (k-k')^2$ by Proposition~\ref{prop:orb_C_M2-dim}. Hence
\begin{align*}
        \dim \Phi(k,  k',\mathbb{C}^n) &= k' + [ n^2 - 2k' - (n-k-k')^2 - (k-k')^2 ]
         \\ 
        &= 2k(n-k) + 2k'(n-k') - k'.
\end{align*}
\end{proof}
As a sanity check, for $k'=0$, we have $\dim \Phi(k, 0, \mathbb{C}^n) = 2k (n-k) = \dim \Gr(k,\mathbb{C}^n)$ as expected. The reader may have noticed that the results in this section are considerably  neater and the proofs considerably simpler than their real counterparts in Section~\ref{sec:dim}; this is a consequence of the diagonalizability of matrices over $\mathbb{C}$.

\section{Product of complex Grassmannians}

The dimension in Theorem~\ref{thm:prod_C_M2-dim} shows that over $\mathbb{C}$, the product of two Grassmannians does not give the whole of $\SU(n)$. We will show in Proposition~\ref{prop:M3_fail} that neither does the product of three Grassmannians, but four would do it, as we will see in Theorem~\ref{thm:four}. We begin by generalizing the permutation invariance in Corollary~\ref{cor:char}.
\begin{corollary}\label{cor:permutation}
Let $k_1, \dots, k_d, n \in \mathbb{N}$ with $k_1,\dots, k_d < n$.
    \begin{enumerate}[label={\upshape(\roman*)}, ref=\roman*] 
        \item\label{cor:permutation:item1}  For any permutation $\sigma \in \mathfrak{S}_d$, we have
            \begin{align*}
                \Phi(k_{\sigma(1)}, k_{\sigma(2)},\dots, k_{\sigma(d)},\mathbb{C}^n) &= \Phi(k_1,k_2,\dots,k_d,\mathbb{C}^n),  \\ 
                \Phi(n-k_1,k_2,\dots,k_d,\mathbb{C}^n) &= -\Phi(k_1,k_2,\dots,k_d,\mathbb{C}^n).
            \end{align*}
        \item\label{cor:permutation:item2} There exist $k_1',\dots,k_d' \in \mathbb{N}$ such that $n > k_1' \ge k_2' \ge \dots \ge k_d'$, $k_1'+k_2' \le n $, and
        \[
        \Phi(k_1',\dots,k_d',\mathbb{C}^n) = \Phi(k_1,\dots,k_d,\mathbb{C}^n).
        \] 
    \end{enumerate}
\end{corollary}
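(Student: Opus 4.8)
The plan is to obtain both parts by bookkeeping on top of the two-factor facts already available in their complex form: the paragraph preceding this corollary records that Corollary~\ref{cor:char} holds verbatim over $\mathbb{C}^n$, so in particular we have the involution-model identity $\Gr(n-k,\mathbb{C}^n)=-\Gr(k,\mathbb{C}^n)$ for $1\le k\le n-1$ and the commutativity $\Phi(k,k',\mathbb{C}^n)=\Phi(k',k,\mathbb{C}^n)$.

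For part~\eqref{cor:permutation:item1} I would first settle the complement identity, which is immediate: since $\Gr(n-k_1,\mathbb{C}^n)=-\Gr(k_1,\mathbb{C}^n)$ and $(-A)B=-(AB)$ for matrices,
\[
\Phi(n-k_1,k_2,\dots,k_d,\mathbb{C}^n)=\bigl(-\Gr(k_1,\mathbb{C}^n)\bigr)\Gr(k_2,\mathbb{C}^n)\cdots\Gr(k_d,\mathbb{C}^n)=-\Phi(k_1,\dots,k_d,\mathbb{C}^n).
\]
For the permutation identity it suffices to treat the transposition $\sigma$ swapping positions $i$ and $i+1$, since such transpositions generate $\mathfrak{S}_d$. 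Here I would take an arbitrary product $X_1\cdots X_d$ with $X_j\in\Gr(k_j,\mathbb{C}^n)$, observe that $X_iX_{i+1}\in\Phi(k_i,k_{i+1},\mathbb{C}^n)=\Phi(k_{i+1},k_i,\mathbb{C}^n)$ and hence write $X_iX_{i+1}=X'_{i+1}X'_i$ with $X'_{i+1}\in\Gr(k_{i+1},\mathbb{C}^n)$ and $X'_i\in\Gr(k_i,\mathbb{C}^n)$, and substitute back to exhibit $X_1\cdots X_d$ as an element of $\Phi(k_{\sigma(1)},\dots,k_{\sigma(d)},\mathbb{C}^n)$; this gives one inclusion, and applying $\sigma$ once more gives the other.

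For part~\eqref{cor:permutation:item2} the plan is induction on $N\coloneqq k_1+\dots+k_d$. Using part~\eqref{cor:permutation:item1} I may reorder the indices so that $k_1\ge k_2\ge\dots\ge k_d$. If $k_1+k_2\le n$ I am done, taking $k'_i=k_i$. Otherwise $k_1+k_2>n$, and I would replace the two largest indices by their complements; because $\Gr(n-k_j,\mathbb{C}^n)=-\Gr(k_j,\mathbb{C}^n)$, the two sign changes cancel in the matrix product, so
\[
\Phi(k_1,k_2,k_3,\dots,k_d,\mathbb{C}^n)=\bigl(-\Gr(k_1,\mathbb{C}^n)\bigr)\bigl(-\Gr(k_2,\mathbb{C}^n)\bigr)\Gr(k_3,\mathbb{C}^n)\cdots\Gr(k_d,\mathbb{C}^n)=\Phi(n-k_1,n-k_2,k_3,\dots,k_d,\mathbb{C}^n).
\]
The new tuple $(n-k_1,n-k_2,k_3,\dots,k_d)$ again has all entries in $\{1,\dots,n-1\}$ and sum $N+2(n-k_1-k_2)<N$, so the induction hypothesis applies to it and produces the required $(k'_1,\dots,k'_d)$, which then also works for $(k_1,\dots,k_d)$.

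I do not anticipate a real obstacle: once Corollary~\ref{cor:char} is available over $\mathbb{C}$, everything is manipulation of the defining products. The two points that deserve a moment of attention are the sign cancellation $(-A)(-B)=AB$, which is what keeps the reduction step in part~\eqref{cor:permutation:item2} sign-free (landing us back in $\Phi$ rather than $-\Phi$), and the termination of that reduction, which is guaranteed because $N$ is a positive integer that strictly decreases with each application.
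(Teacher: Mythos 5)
Your proposal is correct and follows essentially the same route as the paper: part~\eqref{cor:permutation:item1} via $\Gr(n-k,\mathbb{C}^n)=-\Gr(k,\mathbb{C}^n)$ and adjacent transpositions using the complex version of Corollary~\ref{cor:char}, and part~\eqref{cor:permutation:item2} by sorting and repeatedly replacing $(k_1,k_2)$ with $(n-k_1,n-k_2)$ whenever $k_1+k_2>n$. The only difference is that you make the termination of the reduction explicit via induction on $k_1+\cdots+k_d$, which the paper leaves implicit.
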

\begin{proof}
Let $X_1 \cdots X_d \in\Phi(k_1,\dots,k_d,\mathbb{C}^n)$. For any $j=1,\dots,d$, there exist $X_j' \in\Gr(k_j, \mathbb{C}^n)$, $X_{j+1}' \in \Gr(k_{j+1}, \mathbb{C}^n)$ such that $X_j X_{j+1} = X_{j+1}' X_j'$ by Corollary~\ref{cor:char}. So $\Phi(k_1,\dots, k_j,k_{j+1},\dots,k_d,\mathbb{C}^n) = \Phi(k_1,\dots,k_{j+1},k_j,\dots,k_d,\mathbb{C}^n)$, and \eqref{cor:permutation:item1} follows from repeating this argument. Suppose we have sorted in descending order $k_1\ge \dots \ge k_d$. If $k_1 + k_2 > n$, replace $(k_1,k_2)$ by $(n-k_1,n-k_2)$, and sort to descending order again. This may be repeated until we obtain \eqref{cor:permutation:item2}.
\end{proof}

It follows from Corollary~\ref{cor:permutation}\eqref{cor:permutation:item2} that if we define, for any $n, d \in \mathbb{N}$,
\begin{equation}\label{def:Kset}
K_{n,d} \coloneqq \lbrace (k_1,\dots,k_d)  \in \mathbb{N}^d :  \; k_1+k_2 \le n, \; k_1 \ge \dots \ge k_d \rbrace,
\end{equation}
then each $\Phi(k_1,\dots,k_d,\mathbb{C}^n)$ can be represented by at least one $d$-tuple $(k_1,\dots,k_d) \in K_{n,d}$. 

A \emph{reflection matrix} $R\in\U(n)$ is simply an element of $\Gr(n-1, \mathbb{C}^n)$, i.e., $R$ is unitarily similar to $\diag(-1, I_{n-1})$. Reflection matrices generate $\SU(n) \cup \SU^\m (n)$, a normal subgroup of $\U(n)$.  For a matrix $A \in \SU(n) \cup \SU^\m (n)$, its \emph{reflection length}  \cite{djokovic_products_1979} is given by
\begin{equation}\label{def:length}
\ell(A) \coloneqq \min \lbrace m \in \mathbb{N} : \; A = R_1 \cdots R_m,\; R_j \in \Gr(n-1, \mathbb{C}^n)\rbrace;
\end{equation}
and for a subset $S \subseteq \SU(n) \cup \SU^\m (n)$, it is given by
\begin{equation}\label{def:length2}
\ell(S) \coloneqq \max \lbrace \ell(A) : A \in S \rbrace.
\end{equation}
We reproduce a result in \cite{djokovic_products_1979} for easy reference.
\begin{theorem}[Djokovi\'c--Malzan]\label{thm:length}
Let the eigenvalues of $A \in \SU(n) \cup \SU^\m (n)$ be $e^{\ii \theta_1(A)},\dots, e^{\ii \theta_n(A)}$ with $0 \le \theta_1(A) \le \dots \le \theta_n(A) < 2\pi$.
Then
\[
\ell(A) = \frac{1}{\pi} \max \biggr(\sum_{j=1}^n \theta_j(A), \sum_{j=1}^n \theta_j(A^\h) \biggr).
\]
\end{theorem}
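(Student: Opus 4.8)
The plan is to prove $\ell(A) \ge \tfrac1\pi\max\bigl(d(A),d(A^\h)\bigr)$ and $\ell(A) \le \tfrac1\pi\max\bigl(d(A),d(A^\h)\bigr)$ separately, abbreviating $d(A) \coloneqq \sum_{j=1}^n \theta_j(A)$. Three elementary observations will be used throughout. First, $e^{\ii d(A)} = \prod_{j=1}^n e^{\ii\theta_j(A)} = \det A$, so for $A \in \SU(n)\cup\SU^\m(n)$ we have $\det A = \pm1 = \det A^\h$, hence $d(A)/\pi$ and $d(A^\h)/\pi$ are nonnegative integers of the same parity, which is even precisely when $\det A = 1$; in particular $\tfrac1\pi\max(d(A),d(A^\h))$ is a well-defined nonnegative integer, consistent with the stated formula having no ceiling and with a product of $m$ reflections having determinant $(-1)^m$. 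Second, the eigenangles of $A^\h$ consist of $0$ with the multiplicity $m_1(A)$ of $1$ as an eigenvalue of $A$, together with $2\pi - \theta_j(A)$ for each $\theta_j(A)\ne0$, so $d(A) + d(A^\h) = 2\pi\bigl(n - m_1(A)\bigr)$. Third, $\ell(A) = \ell(A^\h)$, since every reflection is an involution and hence $A = R_1\cdots R_m$ if and only if $A^\h = R_m\cdots R_1$.

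The crux of the lower bound is the claim that for every $B \in \U(n)$ and every reflection $R \in \Gr(n-1,\mathbb{C}^n)$ one has $|d(BR) - d(B)| = \pi$, and likewise $|d(RB) - d(B)| = \pi$. Congruence modulo $2\pi$, namely $d(BR) \equiv d(B) + \pi$, is immediate from $\det(BR) = -\det B$ and $e^{\ii d(\cdot)} = \det(\cdot)$. For the magnitude, write $R = I - 2vv^\h$ so that $BR = B - 2(Bv)v^\h$ is a rank-one unitary perturbation of $B$; the spectra of $B$ and $BR$ interlace on the unit circle — each cyclic arc between consecutive eigenvalues of $B$ carries exactly one eigenvalue of $BR$, with multiplicities — which forces $d(BR) - d(B) \in [-2\pi, 2\pi]$. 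The only numbers in this interval congruent to $\pi$ modulo $2\pi$ are $\pm\pi$, so the claim follows. Granting it, if $A = R_1\cdots R_m$ then iterating gives $d(A) \le d(R_1) + (m-1)\pi = m\pi$, and the same applied to $A^\h = R_m\cdots R_1$ gives $d(A^\h) \le m\pi$; hence $m \ge \tfrac1\pi\max(d(A),d(A^\h))$, which is the lower bound on $\ell(A)$.

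For the upper bound I would first reduce to the diagonal case: a unitary matrix is unitarily diagonalizable, and conjugating a factorization into reflections by the diagonalizing unitary yields a factorization of the same length into reflections, so it suffices to treat $A = \diag(e^{\ii\theta_1},\dots,e^{\ii\theta_n})$ with $\theta_j \in [0,2\pi)$. I would then induct on $m \coloneqq \tfrac1\pi\max(d(A),d(A^\h))$, the base case $m = 0$ forcing all $\theta_j = 0$ so that $A = I$ is the empty product. For the inductive step, after replacing $A$ by $A^\h$ if needed we may assume $d(A) \ge d(A^\h)$, equivalently $m = d(A)/\pi \ge n - m_1(A)$ by the second observation; by the interlacing claim it then suffices to produce a single reflection $R$ with $d(AR) \le (m-1)\pi$ and $d((AR)^\h) \le (m-1)\pi$, since then $\tfrac1\pi\max(d(AR),d((AR)^\h)) = m-1$ and the inductive hypothesis applies to $AR$. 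One constructs $R$ to act within a suitably chosen coordinate $2$-plane — or as $\diag(-1,1)$ on a single coordinate where $\theta_j = \pi$ — so that multiplication by $R$ either deletes an eigenangle equal to $\pi$ (which lowers $d(A)$ by $\pi$ and raises $m_1(A)$ by one) or lowers the largest eigenangle while holding the small eigenangles in check; the arithmetic constraint $\sum_j \theta_j \in \pi\mathbb{Z}$ coming from $\det A = \pm1$, together with the inequality $m \ge n - m_1(A)$, is what guarantees the existence of such an $R$.

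I expect this construction of the reducing reflection to be the main obstacle, as it is precisely where one must confront the bookkeeping of eigenangles modulo $2\pi$: a naive choice — say the reflection through the eigenvector of the largest eigenvalue — can raise $d(A^\h)$ instead of lowering both sums, and for a lone invariant eigenvalue $e^{\ii\theta}$ with $\theta \in (0,\pi)$ no single reflection reduces both $d$ and $d(\cdot^\h)$, forcing one to pair such an eigenangle with another. I would therefore organize the inductive step as a short case analysis on the multiset $\{\theta_j : \theta_j \ne 0\}$ — cases such as ``some $\theta_j = \pi$'', ``some pair with $\theta_i + \theta_j \le 2\pi$ suitably positioned'', and the remaining configurations — exhibiting $R$ explicitly in each. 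With the interlacing lemma and these two inductions in hand, the argument stays within elementary unitary matrix theory, in the same spirit as the rest of the paper; a complete treatment may be found in Djokovi\'c--Malzan \cite{djokovic_products_1979}.
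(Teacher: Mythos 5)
The paper offers no proof of this statement: it is quoted verbatim from Djokovi\'c--Malzan \cite{djokovic_products_1979} ``for easy reference,'' so there is nothing in the paper to compare your argument against, and your proposal must be judged as a standalone proof. On that standard, your lower bound is essentially complete and correct: the three preliminary observations are right, the parity/determinant argument pins $d(BR)-d(B)$ to $\pi$ modulo $2\pi$, and the circular interlacing of the spectra of $B$ and $B(I-2vv^\h)$ does confine $d(BR)-d(B)$ to $[-2\pi,2\pi]$ (one pairs each eigenvalue of $BR$ with the lower endpoint of its arc; the total counterclockwise motion is at most $2\pi$, and the branch cut at $0$ can only subtract $2\pi$ from the sum). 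The one thing you assert without justification there is the interlacing lemma itself for rank-one multiplicative unitary perturbations; it is standard but not free, and a complete write-up would need a proof or a citation for it.

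The genuine gap is the upper bound. Your induction reduces everything to producing, for a diagonal $A$ with $d(A)=m\pi\ge d(A^\h)$, a single reflection $R$ with $\max\bigl(d(AR),d((AR)^\h)\bigr)=(m-1)\pi$, and you explicitly stop short of constructing it, deferring to the reference. This is precisely the nontrivial half of the theorem. The construction does go through --- when $d(A)>d(A^\h)$ a coordinate reflection killing an eigenangle in $[\pi,2\pi)$ suffices, and when $d(A)=d(A^\h)$ one must raise the multiplicity of the eigenvalue $1$, which can be done either by a coordinate reflection at an eigenangle equal to $\pi$ or by a reflection $\begin{bsmallmatrix} c & s\\ \bar s & -c\end{bsmallmatrix}$ in a $2$-plane spanned by eigenvectors with angles $a\in(0,\pi)$ and $b\in(\pi,2\pi)$, where solving $\det(\diag(e^{\ii a},e^{\ii b})R_0-I)=0$ gives $c=\sin\tfrac{a+b}{2}/\sin\tfrac{b-a}{2}$ and the identity $\sin^2\tfrac{b-a}{2}-\sin^2\tfrac{a+b}{2}=-\sin a\sin b>0$ guarantees $|c|<1$ --- but none of this verification appears in your proposal. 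As written, your argument establishes $\ell(A)\ge\frac1\pi\max\bigl(d(A),d(A^\h)\bigr)$ and only a plan for the reverse inequality.
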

With this, we may deduce the following for the cases of interest to us.
\begin{corollary}[Reflection length of Grassmannian]\label{cor:length}
Let $k,k_1,\dots,k_d, n \in \mathbb{N}$ with $k, k_1,\dots,k_d < n$, $k_1+k_2 \le n$, and $k_1 \ge \dots \ge k_d$. Then $\ell(A)=n-k$ for any $A\in \Gr(k, \mathbb{C}^n)$,
\[
\ell(\Gr(k, \mathbb{C}^n)) = n-k, \qquad \ell(\SU(n)) = 2n-2, \qquad \ell(\SU^\m (n)) = 2n-1,
\]
and
\[
\ell(\Phi(k_1,\dots,k_d,\mathbb{C}^n)) \le \begin{cases}
                k_1 + k_2 + \dots + k_d &\text{if } d \text{ is even}, \\
                n-k_1 + k_2 + \dots + k_d  &\text{if } d \text{ is odd}.
            \end{cases}
\]
\end{corollary}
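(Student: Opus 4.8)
The plan is to reduce every assertion to the Djokovi\'c--Malzan formula (Theorem~\ref{thm:length}), using two elementary auxiliary facts: that $-X \in \Gr(n-k,\mathbb{C}^n)$ whenever $X \in \Gr(k,\mathbb{C}^n)$ (already used in Corollary~\ref{cor:char}, since negation preserves $X^2 = I$ and sends the trace $2k-n$ to $2(n-k)-n$), and that reflection length is subadditive, $\ell(AB) \le \ell(A) + \ell(B)$, which is immediate from concatenating reflection factorizations.

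First I would compute $\ell(A)$ for $A \in \Gr(k,\mathbb{C}^n)$. From $A^2 = I$ and $\tr A = 2k-n$ the spectrum of $A$ is $1$ with multiplicity $k$ and $-1$ with multiplicity $n-k$, so in the notation of Theorem~\ref{thm:length} exactly $n-k$ of the angles $\theta_1(A),\dots,\theta_n(A)$ equal $\pi$ and the rest vanish; since $A$ is Hermitian, $A^\h = A$ produces the same list of angles. Hence $\sum_j \theta_j(A) = \sum_j \theta_j(A^\h) = (n-k)\pi$ and Theorem~\ref{thm:length} gives $\ell(A) = n-k$, whence $\ell(\Gr(k,\mathbb{C}^n)) = n-k$ from definition \eqref{def:length2}.

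Next I would handle $\SU(n)$ and $\SU^\m(n)$. For the upper bounds, note that for $A \in \SU(n)\cup\SU^\m(n)$ we have $\det A = e^{\ii\sum_j\theta_j(A)}$, so $\sum_j\theta_j(A)$ is a multiple of $2\pi$ when $\det A = 1$ and an odd multiple of $\pi$ when $\det A = -1$; as each $\theta_j(A)\in[0,2\pi)$ the sum lies in $[0,2\pi n)$, hence is at most $2\pi(n-1)$ or $(2n-1)\pi$ respectively, and likewise for $A^\h$, so Theorem~\ref{thm:length} gives $\ell(\SU(n))\le 2n-2$ and $\ell(\SU^\m(n))\le 2n-1$. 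For the matching lower bounds I would exhibit scalar matrices attaining them: $e^{2\pi\ii(n-1)/n}I_n\in\SU(n)$ has angle-sum $2\pi(n-1)$, and $e^{\ii\pi(2n-1)/n}I_n\in\SU^\m(n)$ has angle-sum $\pi(2n-1)$, so Theorem~\ref{thm:length} forces their reflection lengths to be at least $2n-2$ and $2n-1$, and equality holds throughout.

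Finally, for $\Phi(k_1,\dots,k_d,\mathbb{C}^n)$, write a point as $A = X_1\cdots X_d$ with $X_i\in\Gr(k_i,\mathbb{C}^n)$. When $d$ is even, $(-1)^d = 1$ lets me rewrite $A = (-X_1)(-X_2)\cdots(-X_d)$ where each $-X_i\in\Gr(n-k_i,\mathbb{C}^n)$ has reflection length $n-(n-k_i)=k_i$ by the first step, so subadditivity gives $\ell(A)\le k_1+\dots+k_d$; when $d$ is odd, keep $X_1$ and flip the remaining even number of factors, $A = X_1(-X_2)(-X_3)\cdots(-X_d)$, giving $\ell(A)\le (n-k_1)+k_2+\dots+k_d$. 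The one step that is not pure bookkeeping is this sign-flip: plain subadditivity would only yield the far weaker $\ell(A)\le\sum_i(n-k_i)$, and the idea is that an even number of factors absorbs the signs, turning each $\Gr(k_i,\mathbb{C}^n)$ of length $n-k_i$ into $\Gr(n-k_i,\mathbb{C}^n)$ of length $k_i$. Everything else is a direct application of Theorem~\ref{thm:length} together with elementary spectral bookkeeping, so I do not anticipate any substantive obstacle.
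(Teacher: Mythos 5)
Your proof is correct and follows essentially the same route as the paper: all equalities are read off from the Djokovi\'c--Malzan formula, and the product bound comes from subadditivity combined with the observation that $-X\in\Gr(n-k,\mathbb{C}^n)$ for $X\in\Gr(k,\mathbb{C}^n)$ (flipping factors individually is the same device as the paper's use of $\Phi(k_1,k_2,\mathbb{C}^n)=\Phi(n-k_1,n-k_2,\mathbb{C}^n)$ on neighboring pairs). You merely supply the details the paper compresses into ``immediately,'' including correct extremal scalar matrices for the lower bounds on $\ell(\SU(n))$ and $\ell(\SU^\m(n))$.
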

\begin{proof}
The equalities all follow from Theorem~\ref{thm:length} immediately. The last inequality follows from Corollary~\ref{cor:permutation}: Since $\Phi(k_1,k_2,\mathbb{C}^n) = \Phi(n-k_1,n-k_2,\mathbb{C}^n)$ and $\ell(\Gr(k, \mathbb{C}^n)) = n-k$, we have $\ell(\Phi(k_1,k_2,\mathbb{C}^n))\le k_1+k_2$. Applying this to pairs of neighboring indices yields the result.
\end{proof}

We next obtain $\SU(n) = \Phi(k_1,k_2,k_3,k_4,\mathbb{C}^n)$ as a special case of a more general statement: Any $Z \in \SU(n) \cup \SU^\m (n)$ can be expressed as $Z=X_1X_2X_3X_4$ with $X_i \in \Gr(k_i, \mathbb{C}^n)$ for certain tuples $(k_1,k_2,k_3,k_4)$. We stress that this is more than a refinement of the result for special linear group in \cite{gustafson_products_1976} and cannot be deduced from it. For one, the involution matrices constructed in \cite{gustafson_products_1976} are neither symmetric nor Hermitian and do not have prescribed trace. We see no conceivable way to impose these conditions in \cite{gustafson_products_1976} without derailing their arguments.

\begin{lemma} \label{lem:suff_k2k4}
    For $2 \le n \in \mathbb{N}$, let $K_{n,d}$ be as defined in \eqref{def:Kset}. If $(k_1,k_2,k_3,k_4) \in K_{n,4}$ and $k_2+k_4 \ge n-1$, then 
    \[ \Phi(k_1,k_2,k_3,k_4,\mathbb{C}^n) = \begin{cases}
            \SU(n) &\text{if } k_1 + k_2 + k_3 + k_4 \text{~is~even}, \\
            \SU^\m (n) &\text{if }
            k_1 + k_2 + k_3 + k_4 \text{~is~odd}.
        \end{cases}
    \]
\end{lemma}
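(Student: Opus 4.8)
The plan is to reduce the four-factor problem to a problem about eigenvalues, exploiting the freedom in the hypothesis $k_2+k_4 \ge n-1$. The key structural fact is the following: for fixed $k_1,k_3$, as $X_2$ ranges over $\Gr(k_2,\mathbb{C}^n)$ and $X_4$ over $\Gr(k_4,\mathbb{C}^n)$, the product $X_2 X_4$ ranges over $\Phi(k_2,k_4,\mathbb{C}^n)$, whose points are described explicitly by Theorem~\ref{thm:char_C_M2}. So I would first fix $X_1 \in \Gr(k_1,\mathbb{C}^n)$ and $X_3 \in \Gr(k_3,\mathbb{C}^n)$ in convenient standard position, say $X_1 = I_{k_1,n-k_1}$ and $X_3 = Q_0 I_{k_3,n-k_3} Q_0^\h$ for a suitable unitary $Q_0$, and then ask which $Z \in \SU(n)\cup\SU^\m(n)$ can be written as $Z = X_1 (X_2 X_3 X_4)$ — equivalently, which $W = X_1 Z$ lie in $\Phi(k_2,k_3,k_4,\mathbb{C}^n)$. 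But more efficient is to work directly with $Z = X_1 X_2 X_3 X_4$ and use a dimension/eigenvalue count: the claim will follow if I can show (a) $\Phi(k_1,k_2,k_3,k_4,\mathbb{C}^n)$ is contained in the correct component of $\SU(n)\cup\SU^\m(n)$, which is a parity statement about $\sum \tr(X_i)$ / determinants, and (b) every element of that component is attained.

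For (a): each $X_i \in \Gr(k_i,\mathbb{C}^n)$ has $\det(X_i) = (-1)^{n-k_i}$, so $\det(X_1X_2X_3X_4) = (-1)^{4n - (k_1+k_2+k_3+k_4)} = (-1)^{k_1+k_2+k_3+k_4}$, giving exactly the parity dichotomy in the statement; this is immediate and not the obstacle. For (b), the heart of the matter, I would argue as follows. By Corollary~\ref{cor:permutation} I may reorder freely, so group the product as $(X_1X_2)(X_3X_4)$ — a product of two elements of $\Phi$-type two-factor sets — but better: fix a target $Z$ in the right component, with eigenvalues $e^{\ii\phi_1},\dots,e^{\ii\phi_n}$. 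I want to realize $Z$ as a product of two reflections-type blocks times two more. The cleanest route uses Theorem~\ref{thm:char_C_M2} twice: write $Z = W_1 W_2$ where I demand $W_1 \in \Phi(k_1,k_2,\mathbb{C}^n)$ and $W_2 \in \Phi(k_3,k_4,\mathbb{C}^n)$. By Theorem~\ref{thm:char_C_M2}, $\Phi(k_1,k_2,\mathbb{C}^n)$ consists of all unitary matrices with eigenvalue list $\{e^{\ii\theta_j}, e^{-\ii\theta_j}\}_{j=1}^{k_2}$ (for arbitrary $\theta_j\in[0,\pi]$) together with $1$ repeated $n-k_1-k_2$ times and $-1$ repeated $k_1-k_2$ times — in particular, every such matrix is conjugate to its inverse, and the "free" part can realize $k_2$ conjugate pairs of arbitrary unit-modulus eigenvalues while the rigid part contributes $k_1-k_2$ copies of $-1$. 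Likewise for $\Phi(k_3,k_4,\mathbb{C}^n)$ with $k_4$ free pairs and $k_3-k_4$ rigid $-1$'s. The condition $k_2 + k_4 \ge n-1$ is precisely what guarantees enough "free" eigenvalue pairs: I would show that any $Z$ in the correct component admits a factorization $Z = W_1 W_2$ with $W_1,W_2$ of these two prescribed spectral types. This is a statement of the form "given the spectrum of $Z$, choose spectra for $W_1$ and $W_2$ of the allowed shape so that $W_1W_2$ has the spectrum of $Z$", which is a Horn-type / Thompson-type additive-eigenvalue feasibility question on $\U(n)$, controllable because one factor ($W_2$, say) can be taken with spectrum as free as we like subject only to the pair-symmetry and $k_4\ge \lceil (n-1)/2\rceil$-ish count, and then $W_1 = Z W_2^{-1}$ — one checks $W_1$ lands in $\Phi(k_1,k_2,\mathbb{C}^n)$ by verifying its eigenvalues have the required multiplicity pattern, which is where $k_2$ large enough and the pairing is used.

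The main obstacle I anticipate is step (b), specifically controlling the spectrum of the product $W_1 W_2$ — a product of unitaries does not have eigenvalues that are simple functions of the factors' eigenvalues, so I cannot just "add angles". I expect the authors circumvent this by a more hands-on construction: rather than prescribing $W_1,W_2$ abstractly by spectrum, build $X_2$ and $X_4$ as explicit reflection-type (rank-one perturbation) involutions so that $X_1 X_2 X_3 X_4$ telescopes, reducing $n$ by $2$ and inducting — i.e., choose $X_2, X_4$ to act as the identity on an $(n-2)$-dimensional $Z$-related subspace and do the work on a $2\times 2$ block, using $\Phi$-invariance under $\O/\U$-conjugation (Lemma~\ref{lem:conditions}-style) to set up the block. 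The hypotheses $k_2+k_4\ge n-1$ and $(k_1,k_2,k_3,k_4)\in K_{n,4}$ should be exactly what is needed to keep the induction parameters valid (the new tuple stays in $K_{n-2,4}$ and still satisfies $k_2'+k_4'\ge n-3$), with the base cases $n=2,3$ checked directly against Theorem~\ref{thm:length} (reflection length) to confirm surjectivity onto the stated component. So the skeleton is: parity computation for the component (easy); an induction on $n$ in steps of $2$ via explicit $2\times 2$ "corrections" built into $X_2$ and $X_4$ (the crux); base cases $n \in \{2,3\}$ by direct computation; and bookkeeping via Corollary~\ref{cor:permutation} that the reduced tuple remains admissible.
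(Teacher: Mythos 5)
There is a genuine gap, and you have in fact named it yourself: you reduce the problem to writing $Z = W_1 W_2$ with $W_1 \in \Phi(k_1,k_2,\mathbb{C}^n)$ and $W_2 \in \Phi(k_3,k_4,\mathbb{C}^n)$, each characterized spectrally via Theorem~\ref{thm:char_C_M2}, but you then leave open precisely the step that constitutes the proof — showing that every $Z$ in the correct component admits such a factorization. The Horn--Thompson route you sketch (``choose $W_2$ with free spectrum, set $W_1 = ZW_2^{-1}$, verify its eigenvalue pattern'') is not workable as stated: the spectrum of $ZW_2^{-1}$ is not controlled by the spectra of $Z$ and $W_2$ separately, which is exactly the difficulty you flag. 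Your fallback guess (induction on $n$ in steps of $2$ with $2\times 2$ corrections) is not what is needed either, and you do not carry it out.

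The missing idea is much simpler than either of your two routes: diagonalize $Z$ and then insist that $W_1$ and $W_2$ be diagonal \emph{in the same basis}. Then the eigenvalue-of-a-product problem trivializes to coordinatewise multiplication of phases, and the conjugate-pair constraint from Theorem~\ref{thm:char_C_M2} is met by a telescoping choice: with $Z = \diag(e^{\ii\gamma_1},\dots,e^{\ii\gamma_n})$ and partial sums $\alpha_j = \gamma_1+\cdots+\gamma_{2j-1}$, $\beta_j = \gamma_1+\cdots+\gamma_{2j}$, take $W_1$ with diagonal $(e^{\ii\alpha_1}, e^{-\ii\alpha_1}, e^{\ii\alpha_2},\dots)$ and $W_2$ with diagonal $(1, e^{\ii\beta_1}, e^{-\ii\beta_1},\dots)$, interleaved so that consecutive entries of $W_1W_2$ give back $e^{\ii\gamma_1}, e^{\ii\gamma_2},\dots$; the constraint $\sum\gamma_i = 0$ or $\pi$ closes the last entry, with a residual $\pm1$ absorbed by the rigid part of the canonical form (this is where the parity of $k_1+k_2+k_3+k_4$ enters on the sufficiency side, not just via your determinant computation). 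One should also first observe that $K_{n,4}$ together with $k_2+k_4\ge n-1$ pins the tuple down to $k_1=k_2=\lfloor n/2\rfloor$ (and nearly so for $k_3,k_4$), which is what makes the count of available conjugate pairs exactly right; your ``enough free pairs'' intuition is correct but is never turned into this explicit bookkeeping. As it stands the proposal is a correct reduction plus an unresolved crux, not a proof.
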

\begin{proof}
Suppose $n=2k$. Let  $C\in \SU(n)$. Without loss of generality,  we may assume that $C=\diag(e^{\ii \gamma_1} ,\dots,e^{\ii \gamma_n})$ with $\sum_{i=1}^n \gamma_i = 0$. Since $(k_1,k_2,k_3,k_4)\in K_{n,4}$ and $k_2+k_4 \ge n-1$, we must have $k_1=k_2=k$ and $(k_3,k_4) \in \{(k,k), (k,k-1),(k-1,k-1)\}$. Let
\[
\alpha_j \coloneqq \sum_{i=1}^{2j-1}\gamma_i,\quad j=1,\dots, k; \qquad \beta_j \coloneqq \sum_{i=1}^{2j} \gamma_i,  \quad  j=1,\dots, k-1.
\]
From these, we define two matrices:
\begin{align*}
A &\coloneqq \diag(e^{\ii \alpha_1},e^{-\ii \alpha_1},\dots,e^{\ii \alpha_{k}},e^{-\ii \alpha_k}),  \\
B &\coloneqq \diag(1, e^{\ii \beta_1},e^{-\ii \beta_1},\dots,e^{\ii \beta_{k-1}},e^{-\ii \beta_{k-1}},(-1)^{k_3-k_4}).
\end{align*}
By Theorem \ref{thm:char_C_M2},  we have $A \in \Phi(k_1,k_2,\mathbb{C}^{2k})$ and $B \in \Phi(k_3,k_4,\mathbb{C}^{2k})$. It is straightforward to verify that $C = AB$.  

Suppose $n=2k+1$. Let  $C \in \SU^\m (n)$. Without loss of generality,  we may assume that $C=\diag(e^{\ii \gamma_1} ,\dots,e^{\ii \gamma_n})$ with $\sum_{i=1}^n \gamma_i = \pi$.  The condition that $(k_1,k_2,k_3,k_4) \in K_{n,4}$ implies that $k_1\in \{k+1,k\}$ and $k_2=k_3=k_4=k$. Let
\[
\alpha_j \coloneqq \sum_{i=1}^{2j-1}\gamma_i,\quad j =1,\dots, k; \qquad
\beta_j \coloneqq \sum_{i=1}^{2j} \gamma_i,  \quad  j=1,\dots, k. 
\]
From these, we define two matrices:
\begin{align*}
    A &\coloneqq \diag(e^{\ii \alpha_1},e^{-\ii \alpha_1},\dots,e^{\ii \alpha_{k}},e^{-\ii \alpha_k}, (-1)^{k_1-k_2}), \\
    B &\coloneqq \diag(1, e^{\ii \beta_1},e^{-\ii \beta_1},\dots,e^{\ii \beta_{k}},e^{-\ii \beta_{k}}).
\end{align*}
By Theorem \ref{thm:char_C_M2}, we have $A \in \Phi(k_1,k_2,\mathbb{C}^{2k})$ and $B \in \Phi(k_3,k_4,\mathbb{C}^{2k})$.   It is straightforward to verify that $C = AB$.
\end{proof}
 
We next consider the cases not covered by Lemma~\ref{lem:suff_k2k4}, i.e., where $k_2 + k_4 \le n -2$. We begin by addressing a case that requires separate treatment, which requires the following two lemmas.
\begin{lemma} \label{lem:M3111_eqs}
For any $(\theta_1, \theta_2, \theta_3, \theta_4) \in \mathbb{R}^4$ with $\theta_1+\theta_2+\theta_3+\theta_4=0$, there exists a permutation $\tau$ on $\{1,2,3,4\}$ such that
\begin{align*}
x(\cos \theta_{\tau(1)} + \cos \theta_{\tau(2)}) - 2y \cos \theta_{\tau(3)} &= \cos \theta_{\tau(2)} - \cos \theta_{\tau(3)}, \\
x(\sin \theta_{\tau(1)} + \sin \theta_{\tau(2)})  + 2y \sin \theta_{\tau(3)} &= \sin \theta_{\tau(2)} + \sin \theta_{\tau(3)},
\end{align*}
has a solution $(x_*, y_*) \in [0,1] \times [0,1]$.
\end{lemma}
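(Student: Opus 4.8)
The plan is to reduce the two-equation system to a geometric statement about points on the unit circle and then verify it by a pigeonhole/case argument over the cyclic orderings of the four angles. Write $P_j = (\cos\theta_j, \sin\theta_j)$ for $j=1,2,3,4$, four points on the unit circle $S^1$, with $\theta_1+\theta_2+\theta_3+\theta_4 = 0$. After choosing a permutation $\tau$, the system in the unknowns $x,y$ reads, in vector form,
\[
x\,(P_{\tau(1)} + P_{\tau(2)}) - 2y\,(\cos\theta_{\tau(3)}, -\sin\theta_{\tau(3)}) = (P_{\tau(2)})_x - (P_{\tau(3)})_x,\ (P_{\tau(2)})_y + (P_{\tau(3)})_y),
\]
so it is cleanest to introduce the conjugated point $\overline{P_j} = (\cos\theta_j, -\sin\theta_j)$ (reflection across the real axis) and rephrase: we seek $\tau$ and $(x,y)\in[0,1]^2$ with
\[
x\,(P_{\tau(1)} + P_{\tau(2)}) - 2y\,\overline{P_{\tau(3)}} = \overline{P_{\tau(3)}} \ \text{(first coord sign)}\ \ldots
\]
Rather than fight the mixed signs, I would instead identify $\mathbb{R}^2 \cong \mathbb{C}$ and write $z_j = e^{\ii\theta_j}$. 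The two scalar equations combine into the single complex equation
\[
x\,(z_{\tau(1)} + z_{\tau(2)}) + 2y\,\overline{z_{\tau(3)}} \cdot(\text{a sign pattern}) = z_{\tau(2)} + \overline{z_{\tau(3)}},
\]
and a short bookkeeping check (matching real and imaginary parts against the stated system) pins down the exact complex identity to be solved; I expect it to take the form $x(z_{\tau(1)}+z_{\tau(2)}) = z_{\tau(2)} - \overline{z_{\tau(3)}} + 2y\,\overline{z_{\tau(3)}}$ or a close variant. The point of passing to $\mathbb{C}$ is that the constraint $\theta_1+\theta_2+\theta_3+\theta_4=0$ becomes $z_1z_2z_3z_4 = 1$, which is what will let the conjugates $\overline{z_j}$ be rewritten as products of the other three.

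Next I would exploit the freedom in $\tau$. By relabeling we may assume $\theta_1 \le \theta_2 \le \theta_3 \le \theta_4$ (this is WLOG up to the permutation we are allowed to pick), so that the four points sit in this cyclic order on $S^1$. The natural candidates for $(\tau(1),\tau(2),\tau(3),\tau(4))$ are the ones that pair "adjacent" points: the sums $z_{\tau(1)}+z_{\tau(2)}$ and $z_{\tau(3)}+z_{\tau(4)}$ should be the two partial sums appearing in the canonical form of Theorem~\ref{thm:char_C_M2} (this is exactly how the lemma will be used downstream: $A$ built from $\{\theta_{\tau(1)},\theta_{\tau(2)}\}$ lies in some $\Phi(k_1,k_2,\mathbb{C}^n)$-type orbit and $B$ from the other pair). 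So the real content is: among the three ways to split $\{1,2,3,4\}$ into two pairs, at least one yields a solution $(x,y)$ of the $2\times 2$ linear system lying in the unit square $[0,1]^2$. Solve the linear system by Cramer's rule; the solution is
\[
x = \frac{\det\big[\,b \mid c_2\,\big]}{\det\big[\,c_1 \mid c_2\,\big]},\qquad y = \frac{\det\big[\,c_1 \mid b\,\big]}{\det\big[\,c_1 \mid c_2\,\big]},
\]
where $c_1 = (\cos\theta_{\tau(1)}+\cos\theta_{\tau(2)},\ \sin\theta_{\tau(1)}+\sin\theta_{\tau(2)})$, $c_2 = (-2\cos\theta_{\tau(3)},\ 2\sin\theta_{\tau(3)})$, and $b$ is the right-hand side. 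Using sum-to-product identities, $c_1 = 2\cos\!\big(\tfrac{\theta_{\tau(1)}-\theta_{\tau(2)}}{2}\big)\,(\cos\phi,\sin\phi)$ with $\phi = \tfrac{\theta_{\tau(1)}+\theta_{\tau(2)}}{2}$, so the determinants factor nicely into products of sines and cosines of half-differences of the angles. I expect $x$ and $y$ to come out as ratios of such trigonometric products, and the membership $x,y\in[0,1]$ to reduce to a handful of sign inequalities among $\cos\!\big(\tfrac{\theta_i-\theta_j}{2}\big)$ and $\sin\!\big(\tfrac{\theta_i-\theta_j}{2}\big)$, which are controlled once we know the cyclic order of the $\theta$'s.

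The main obstacle — and where the real case analysis lives — is showing that \emph{some} pairing works, i.e., ruling out the possibility that all three pairings give $x$ or $y$ outside $[0,1]$. Here I would argue by contradiction or by a direct partition of the configuration space of $(\theta_1,\le\theta_2\le\theta_3\le\theta_4)$ with $\sum\theta_j=0$ into regions according to the gaps $\theta_2-\theta_1$, $\theta_3-\theta_2$, $\theta_4-\theta_3$, $2\pi-(\theta_4-\theta_1)$ (the four arc-lengths between consecutive points). Intuitively: if two of the points are close together, pair them — then $c_1$ is long (its norm $2|\cos(\text{small}/2)|$ is near $2$) and the required $x$ is small; the degenerate/boundary cases (two coincident angles, or $\cos$ of a half-difference vanishing so $\det[c_1\mid c_2]=0$ and the system is singular) must be handled by choosing a different pairing, and one checks that not all three pairings can simultaneously be degenerate. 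A clean way to organize this is: order the four arc-gaps, let the \emph{largest} gap be between two consecutive points $P_a, P_b$ (indices cyclic); pair up the remaining two consecutive points with each other and $P_a$ with $P_b$ appropriately — I would expect this "pair across the largest gap" choice to always land in $[0,1]^2$, with the verification being a monotonicity estimate using that each of the other three gaps is at most... well, at most the average, hence $< \pi$. I anticipate spending most of the proof confirming this one geometric claim with the half-angle formulas above; the rest (setting up the complex reformulation, Cramer's rule) is routine.
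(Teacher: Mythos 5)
Your complex reformulation is the right first move and coincides with the paper's: the two equations combine into the single identity $e^{\ii\theta_{\tau(1)}}x - e^{\ii\theta_{\tau(2)}}(1-x) = e^{-\ii\theta_{\tau(3)}}(2y-1)$, which is exactly the variant you guessed. But the heart of the lemma --- that \emph{some} permutation yields a solution in $[0,1]^2$ --- is precisely the part you leave unproven. You replace it with a plan (Cramer's rule, a case analysis on the four arc gaps, the heuristic ``pair across the largest gap''), say twice that you \emph{expect} it to work, and verify no case. Since that existence claim is the entire content of the lemma, the proposal as written has a genuine gap rather than being a complete argument by a different route.

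There is also a structural misreading that makes your intended case analysis both harder and possibly insufficient: the system is not symmetric under the three splittings of $\{1,2,3,4\}$ into unordered pairs. The right-hand side involves $\theta_{\tau(2)}$ and $\theta_{\tau(3)}$ but not $\theta_{\tau(1)}$, and $\theta_{\tau(4)}$ does not appear at all, so the relevant freedom is the choice of $\tau(3)$ together with an \emph{ordered} pair $(\tau(1),\tau(2))$ from the remaining indices, not a choice among three pairings. Once this is seen, the geometry closes the argument in two lines: as $x$ runs over $[0,1]$ the left-hand side traces the chord from $-e^{\ii\theta_{\tau(2)}}$ to $e^{\ii\theta_{\tau(1)}}$, while as $y$ runs over $[0,1]$ the right-hand side traces the full diameter through $e^{-\ii\theta_{\tau(3)}}$; the chord meets that diameter whenever $e^{\ii\theta_{\tau(1)}}$ and $e^{\ii\theta_{\tau(2)}}$ lie in a common closed semicircle bounded by it (then the chord's endpoints sit on opposite sides of the diameter's line, and the crossing point has modulus at most $1$, hence equals $(2y_*-1)(\pm e^{-\ii\theta_{\tau(3)}})$ for some $y_*\in[0,1]$). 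Fixing $\tau(3)=3$, the pigeonhole principle places two of $e^{\ii\theta_1}, e^{\ii\theta_2}, e^{\ii\theta_4}$ in one of the two semicircles determined by $e^{-\ii\theta_3}$, and the proof is complete --- no Cramer's rule, no arc-gap cases, and in fact no use of $\theta_1+\theta_2+\theta_3+\theta_4=0$.
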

\begin{proof} 
We rewrite the system in a more compact form:
    \begin{equation} \label{lem:M3111_eqs:prf:eq1}
        e^{\ii\theta_{\tau(1)}}x - e^{\ii \theta_{\tau(2)}}(1-x)=e^{-\ii \theta_{\tau(3)}}(2y-1). 
\end{equation} 
We regard $e^{\ii\theta_1},e^{\ii\theta_2},e^{\ii\theta_3}, e^{\ii\theta_4}$ as points on the unit circle. We claim that there exists some permutation $\tau$ such that $e^{\ii \theta_{\tau(1)}}, e^{\ii \theta_{\tau(2)}},  e^{-\ii\theta_{\tau(3)}}$ lie in the same semicircle, and, traversing counterclockwise, $e^{-\ii\theta_{\tau(3)}}$ is either the first or last among the three points.  In this case,  \eqref{lem:M3111_eqs:prf:eq1} has a solution in $[0,1] \times [0,1]$: First, observe that there exists some $x_* \in [0,1]$ such that $\arg \left( e^{\ii\theta_{\tau(1)}}x_* - e^{\ii \theta_{\tau(2)}}(1- x_*) \right) = -\theta_{\tau(3)}+k\pi$, where $k \in \{0,1\}$ depends on the position of $e^{-\ii\theta_{\tau(3)}}$.  Next, we may find $y_* \in [0,1]$ such that $|e^{\ii\theta_{\tau(1)}}x_* - e^{\ii \theta_{\tau(2)}}(1- x_*)| = (-1)^k(2y_* - 1)$. So $(x_*,y_*)$ is a solution of \eqref{lem:M3111_eqs:prf:eq1}.

It remains to show that a $\tau$ exists with the desired property.  Consider the complementary semicircles 
\[
S_\p \coloneqq \{e^{\ii t}: -\theta_3 \le t < -\theta_3+\pi \},\quad S_\m \coloneqq \{e^{\ii t}: -\theta_3-\pi \le t < -\theta_3 \}. 
\]
By the Pigeonhole Principle,  at least two of $e^{\ii \theta_1},e^{\ii \theta_2},e^{\ii \theta_4}$ must lie in the same semicircle.  Thus,  there is a permutation $\tau$ such that $\tau(3) = 3$ and $e^{\ii \theta_{\tau(1)}},  e^{\ii \theta_{\tau(2)}}$ lie in the same semicircle, as required.
\end{proof}
\begin{lemma} \label{lem:M3111}
For $k \ge 2$, $\SU(2k) = \Phi(k,k,k,k-2,\mathbb{C}^{2k})$.
\end{lemma}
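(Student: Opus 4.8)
The plan is as follows. I would first dispose of the inclusion $\Phi(k,k,k,k-2,\mathbb{C}^{2k})\subseteq\SU(2k)$: in the involution model a point of $\Gr(k_i,\mathbb{C}^{2k})$ has determinant $(-1)^{2k-k_i}$, so a product $X_1X_2X_3X_4$ over $(k,k,k,k-2)$ has determinant $(-1)^{\sum_i(2k-k_i)}=(-1)^{4k+2}=1$; and by Theorem~\ref{thm:char_C_M2} a spectrum consisting of $k-2$ conjugate pairs together with $1,1,-1,-1$ is a spectrum consisting of $k$ conjugate pairs, so $\Phi(k,k-2,\mathbb{C}^{2k})\subseteq\Phi(k,k,\mathbb{C}^{2k})$ and hence $\Phi(k,k,k,k-2,\mathbb{C}^{2k})\subseteq\Phi(k,k,k,k,\mathbb{C}^{2k})=\SU(2k)$, the last equality by Lemma~\ref{lem:suff_k2k4}. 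For the reverse inclusion, fix $C\in\SU(2k)$. Conjugation by $\U(2k)$ preserves every $\Gr(k_i,\mathbb{C}^{2k})$, hence $\Phi(k,k,k,k-2,\mathbb{C}^{2k})$, so I may assume $C=\diag(e^{\ii\gamma_1},\dots,e^{\ii\gamma_{2k}})$ with $\gamma_1+\dots+\gamma_{2k}=0$; and since $\Phi(k,k,k,k-2,\mathbb{C}^{2k})=\Phi(k,k,\mathbb{C}^{2k})\cdot\Phi(k,k-2,\mathbb{C}^{2k})$, it suffices by Theorem~\ref{thm:char_C_M2} to write $C=AB$ with $A$ having a spectrum of $k$ conjugate pairs and $B$ a spectrum of $k-2$ conjugate pairs together with $1,1,-1,-1$.

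To build $A$ and $B$ I would start from the telescoping of Lemma~\ref{lem:suff_k2k4}: taking $A$ block diagonal with blocks of eigenvalues $e^{\pm\ii\alpha_j}$, $B$ block diagonal correspondingly, and setting $\alpha_j$, $\beta_j$ to be the partial sums $\Gamma_{2j-1}=\gamma_1+\dots+\gamma_{2j-1}$ and $\Gamma_{2j}=\gamma_1+\dots+\gamma_{2j}$, the identity $C=AB$ holds and already exhibits $C$ as a product with all-conjugate-pair spectra, i.e. as an element of $\Phi(k,k,k,k,\mathbb{C}^{2k})$. What must be done in addition is to force two of $B$'s conjugate pairs to become the rigid pairs $\{1,1\}$ and $\{-1,-1\}$. (The pair $\{1,1\}$ is in fact already present, as $B$ carries a $1$ at each end, so really only the rigid $\{-1,-1\}$ must be installed.) A modification keeping $A$ and $B$ block diagonal would require an accumulated partial sum $\Gamma_{2j}$ to be $0$ or $\pi$, which cannot be arranged for all $C$; so I would replace one link of the telescoping by a local gadget governed by four angles. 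At this link four of the $\gamma_i$, shifted by the incoming carry of the chain, serve as $\theta_1,\theta_2,\theta_3,\theta_4$ in Lemma~\ref{lem:M3111_eqs}---the chain relation makes $\theta_1+\theta_2+\theta_3+\theta_4=0$---and that lemma furnishes a permutation $\tau$ and $x_\ast,y_\ast\in[0,1]$ solving the displayed system. The numbers $x_\ast,y_\ast$ are the squared moduli of the coordinates of the unit vectors defining the rank-one projections inside the gadget; they determine the $B$-part of the gadget (a Hermitian involution supplying the rigid eigenvalues), and then $A=CB^{-1}$ determines the $A$-part, with $x_\ast,y_\ast\in[0,1]$ being exactly the conditions that force this $A$-part to be unitary with conjugate eigenvalue pairs. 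Outside the gadget the ordinary telescoping contributes $k-2$ conjugate pairs to each of $A$ and $B$. Collecting everything, $A\in\Phi(k,k,\mathbb{C}^{2k})$, $B\in\Phi(k,k-2,\mathbb{C}^{2k})$, and $AB=C$.

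The step I expect to be the main obstacle is the gadget together with its integration into the telescoping: a determinant-one diagonal $4\times4$ pattern need not factor as a unitary with two conjugate eigenvalue pairs times a trace-zero Hermitian involution, and it is precisely the slack afforded by the telescoping carry---allowing the four relevant angles to be placed in the favorable cyclic position extracted in Lemma~\ref{lem:M3111_eqs} and to sum to $0$---that makes the factorization possible. Pinning down which $\gamma_i$ to route through the gadget, how the carry enters $\theta_1,\dots,\theta_4$, and matching the closed interval $[0,1]$ on both sides of the construction is the delicate bookkeeping; the remainder is the routine telescoping already established in Lemma~\ref{lem:suff_k2k4}.
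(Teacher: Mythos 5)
Your proposal is correct and follows essentially the same route as the paper: diagonalize $C$, telescope partial sums of the $\gamma_i$ into conjugate-pair blocks of $A$ and $B$ as in Lemma~\ref{lem:suff_k2k4}, and replace the last link by a $4\times 4$ gadget built from $2\times 2$ unitaries (the paper's $V_A$, $V_B$ with $B$-blocks $I_{1,-1}$ and $V_BI_{1,-1}V_B^\h$ supplying the eigenvalues $1,1,-1,-1$), whose solvability reduces exactly to the system of Lemma~\ref{lem:M3111_eqs} after permuting the diagonal entries of $C$. Your preliminary determinant/spectrum argument for the easy inclusion is a harmless addition the paper leaves implicit.
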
 
\begin{proof}
We show that every $C \in \SU(2k)$ lies in $\Phi(k,k,k,k-2,\mathbb{C}^{2k})$. Without loss of generality,  we assume $C = \diag(e^{\ii \gamma_1},\dots,e^{\ii \gamma_{2k}})$ with $\gamma_1 + \dots + \gamma_{2k} = 0$. Denote
\[
\alpha_j \coloneqq \begin{cases}
    \sum_{i=2j+1}^{2k-3} \gamma_i &\text{for~} j=1,\dots,k-2, \\
    \sum_{i=1}^{2k-3} \gamma_i &\text{for~} j=k-1, \\
    \gamma_{2k-2} + \pi &\text{for~} j=k,
\end{cases}\quad  \beta_j \coloneqq \sum_{i=2j}^{2k-3} \gamma_i \text{~~for~} j=1,\dots,k-2.
\]
We define
\[
        V(x,y,\theta) = \begin{bmatrix}
        x & y \\
        -e^{\ii \theta} \overline{y} & e^{\ii \theta} \overline{x}
        \end{bmatrix}, \quad |x|^2 + |y|^2=1, \quad \theta \in [0,2\pi).
\]
Note that any element in $\U(2)$ can be parameterized as $V(x,y,\theta)$ for some parameters $x,y,\theta$.

Next,  we consider the matrix equation $C = AB$ where $A = \diag(A_1, A_2)$,  $B = \diag(B_1, B_2)$, and     
        \begin{align*}
        A_1 &\coloneqq \diag(e^{\ii \alpha_{k-1}}, e^{-\ii \alpha_1},e^{\ii \alpha_1}, e^{-\ii \alpha_2},e^{\ii \alpha_2},\dots,e^{-\ii \alpha_{k-3}},e^{\ii \alpha_{k-3}}, e^{-\ii \alpha_{k-2}}), \\
        A_2 &\coloneqq \begin{bmatrix}
        \diag(e^{\ii \alpha_{k-2}},e^{\ii \alpha_k}) & 0 \\
        0 & V_A 
        \diag(e^{-\ii \alpha_{k-1}}, e^{-\ii \alpha_k}) V_A^\h
        \end{bmatrix}, \\
     V_A &\coloneqq V(x_A, y_A, \theta_A), \\
        B_1 &\coloneqq \diag(e^{-\ii \beta_1},e^{\ii \beta_1},e^{-\ii \beta_2},e^{\ii \beta_2},\dots,e^{-\ii \beta_{k-2}}, e^{\ii \beta_{k-2}}), \\
        B_2 &\coloneqq \begin{bmatrix}
        I_{1,-1} & 0 \\
        0 & V_B I_{1,-1} V_B^\h 
        \end{bmatrix},\\
        V_B &\coloneqq V(x_B, y_B, \theta_B).
    \end{align*}
Here $x_A,  y_A,  \theta_A,  x_B,  y_B,  \theta_B$ are unknown parameters to be determined.  By Theorem~\ref{thm:char_C_M2},  we have $A \in  \Phi(k,k,\mathbb{C}^{2k})$ and $B \in \Phi(k,k-2,\mathbb{C}^{2k})$. Unravelling the definition,  the equation $C=AB$ becomes
\begin{equation}\label{lem:M3111:prf:eqs_C=AB}
\begin{aligned}
    (\cos \phi_1 + \cos \phi_2)|x_A|^2 - 2\cos \phi_3 |x_B|^2 &= \cos \phi_2 - \cos \phi_3,  \\
    (\sin \phi_1 + \sin \phi_2) |x_A|^2 + 2 \sin \phi_3 |x_B|^2 &= \sin \phi_2 + \sin \phi_3,   \\
    \arg\Bigl( \frac{e^{\ii\phi_3} x_B y_B}{(e^{-\ii\phi_1}+e^{-\ii\phi_2})x_A y_A} \Bigr) &= \theta_B - \theta_A,
\end{aligned}
\end{equation}
where $(\phi_1, \phi_2, \phi_3, \phi_4) = \Bigl(\sum_{i=1}^{2k-3}\gamma_i, \gamma_{2k-2},\gamma_{2k-1},\gamma_{2k}\Bigr)$.  By Lemma~\ref{lem:M3111_eqs},  we may permute the diagonal entries of $C$ so that the first two equations in \eqref{lem:M3111:prf:eqs_C=AB} has a solution $(|x_A|, |x_B|)$. Observe that the third equation  in \eqref{lem:M3111:prf:eqs_C=AB} does not place any restriction on the values of $|x_A|$ and $|x_B|$. So we are free to pick any $x_A,  y_A,  \theta_A,  x_B,  y_B,  \theta_B$ that satisfy the third equation in \eqref{lem:M3111:prf:eqs_C=AB}. These yield a desired decomposition of $C$. 
\end{proof}

We remind the reader that by its definition \eqref{eq:invo}, the product of $d$ copies of complex Grassmannians must necessarily be a subset of either  $\SU(n)$ or $\SU^\m(n)$ but not both. Our main result of this section may also be viewed as a complete characterization of when it is a proper subset when $d=4$.  
\begin{theorem}[Special unitary group as product of four Grassmannians]\label{thm:four}
For $k \in \mathbb{N}$,  we have 
\begin{align*}
\SU(2k) &= \Phi(k,k,k,k,\mathbb{C}^{2k}) = \Phi(k,k,k-1,k-1,\mathbb{C}^{2k}) = \Phi(k,k,k,k-2,\mathbb{C}^{2k}),  \\
\SU(2k+1) &= \Phi(k,k,k,k,\mathbb{C}^{2k+1}), \\
\SU^{\m}(2k) &= \Phi(k,k,k,k-1,\mathbb{C}^{2k}),  \\
\SU^{\m}(2k+1) &= \Phi(k+1,k,k,k,\mathbb{C}^{2k+1}). 
\end{align*} 
Here we assume $k \ge 2$ in $\Phi(k,k,k-1,k-1,\mathbb{C}^{2k})$ and $\Phi(k,k,k,k-1,\mathbb{C}^{2k})$,  and $k \ge 3$ in $\Phi(k,k,k,k-2,\mathbb{C}^{2k})$.  Moreover,  for any $(k_1,k_2,k_3,k_4) \in K_{n,4}$ not listed above,  $\Phi(k_1,k_2,k_3,k_4,\mathbb{C}^n)$ is a proper subset of $\SU(n)$ or $\SU^\m (n)$. 
\end{theorem}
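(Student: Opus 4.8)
The plan is to separate the displayed identities from the concluding properness assertion. The identities are quick given the lemmas already in place: five of the six families --- $(k,k,k,k)$ for $n=2k$ and for $n=2k+1$, together with $(k,k,k-1,k-1)$, $(k,k,k,k-1)$ for $n=2k$ and $(k+1,k,k,k)$ for $n=2k+1$ --- lie in $K_{n,4}$ and satisfy $k_2+k_4\ge n-1$, so Lemma~\ref{lem:suff_k2k4} applies verbatim (one only checks that the parity of $k_1+k_2+k_3+k_4$ in each case matches the stated $\SU(n)$ versus $\SU^\m(n)$), while the sixth, $\SU(2k)=\Phi(k,k,k,k-2,\mathbb{C}^{2k})$, is precisely Lemma~\ref{lem:M3111}.

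For the properness statement, by Corollary~\ref{cor:permutation}\eqref{cor:permutation:item2} we may assume $(k_1,k_2,k_3,k_4)\in K_{n,4}$, i.e., $n>k_1\ge k_2\ge k_3\ge k_4\ge 1$ and $k_1+k_2\le n$. Since $\det X_i=(-1)^{n-k_i}$ for $X_i\in\Gr(k_i,\mathbb{C}^n)$, the product $\Phi(k_1,k_2,k_3,k_4,\mathbb{C}^n)$ sits inside $\SU(n)$ when $k_1+k_2+k_3+k_4$ is even and inside $\SU^\m(n)$ when it is odd. The main tool is reflection length: Corollary~\ref{cor:length} gives $\ell(\Phi(k_1,k_2,k_3,k_4,\mathbb{C}^n))\le k_1+k_2+k_3+k_4$, while $\ell(\SU(n))=2n-2$ and $\ell(\SU^\m(n))=2n-1$; hence if $k_1+k_2+k_3+k_4<2n-2$ the product misses some element of maximal reflection length in its ambient group, so it is a proper subset. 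It remains to handle $(k_1,k_2,k_3,k_4)\in K_{n,4}$ with $k_1+k_2+k_3+k_4\ge 2n-2$: here $k_3+k_4\le 2k_2\le k_1+k_2\le n$ forces $k_1+k_2+k_3+k_4\le 2n$ and $k_1+k_2\in\{n-1,n\}$, and a short case analysis on $k_1+k_2$ and $k_3+k_4$ (tracking the ordering) shows the only such tuples are the six listed families together with two exceptions, $(k+1,k-1,k-1,k-1)$ with $n=2k$, $k\ge 2$, and $(k+1,k,k,k-1)$ with $n=2k+1$, $k\ge 2$.

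For these two exceptional tuples the length bound is not strict, so I would rewrite each product via the involution-model identity $\Gr(n-k,\mathbb{C}^n)=-\Gr(k,\mathbb{C}^n)$, valid verbatim over $\mathbb{C}$. Pulling a scalar $-I$ out of each of the three $\Gr(k-1,\mathbb{C}^{2k})$ factors shows that $\Phi(k+1,k-1,k-1,k-1,\mathbb{C}^{2k})=-S$, where $S$ is a product of four copies of $\Gr(k+1,\mathbb{C}^{2k})$; since $\ell(\Gr(k+1,\mathbb{C}^{2k}))=k-1$, subadditivity of reflection length bounds $\ell(S)$ by $4(k-1)=2n-4<2n-2=\ell(\SU(2k))$, so $S$ is a proper subset of $\SU(2k)$, and as $X\mapsto -X$ is a bijection of $\SU(2k)$ onto itself, $\Phi(k+1,k-1,k-1,k-1,\mathbb{C}^{2k})$ is a proper subset of $\SU(2k)$ as well. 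Similarly $\Phi(k+1,k,k,k-1,\mathbb{C}^{2k+1})=-\Phi(k+1,k+1,k+1,k+2,\mathbb{C}^{2k+1})$, and the product on the right has reflection length at most $3k+(k-1)=4k-1<4k+1=\ell(\SU^\m(2k+1))$, hence is a proper subset of $\SU^\m(2k+1)$; since $X\mapsto -X$ maps $\SU^\m(2k+1)$ bijectively onto $\SU(2k+1)$, $\Phi(k+1,k,k,k-1,\mathbb{C}^{2k+1})$ is a proper subset of $\SU(2k+1)$.

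The step I expect to be the main obstacle is exactly these two exceptional tuples: the reflection-length argument that settles every other non-listed tuple is only non-strict for them, and a naive dimension count also fails (the product of four copies of $\Gr(k+1,\mathbb{C}^{2k})$ has dimension bounded by $8(k^2-1)$, exceeding $\dim\SU(2k)=4k^2-1$), so one must observe that the sign identity $\Gr(n-k,\mathbb{C}^n)=-\Gr(k,\mathbb{C}^n)$ repackages them --- up to an overall $-I$ --- as products of four copies of a single smaller Grassmannian, for which the length bound does become strict. A secondary nuisance is the bookkeeping of the case analysis above, including ensuring that the degenerate small-$k$ instances (where a claimed index would drop to $0$) fall outside $K_{n,4}$.
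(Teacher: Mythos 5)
Your proof is correct and follows essentially the same route as the paper: the positive identities come from Lemma~\ref{lem:suff_k2k4} and Lemma~\ref{lem:M3111}, properness comes from the reflection-length bound of Corollary~\ref{cor:length} combined with the determinant/parity constraint, and the two borderline tuples $(k+1,k-1,k-1,k-1)$ for $n=2k$ and $(k+1,k,k,k-1)$ for $n=2k+1$ are handled by the same sign trick $\Gr(n-k,\mathbb{C}^n)=-\Gr(k,\mathbb{C}^n)$ together with the fact that $X\mapsto -X$ maps $\SU(n)$ to itself when $n$ is even and swaps $\SU(n)$ with $\SU^\m(n)$ when $n$ is odd. The only cosmetic difference is that you negate three factors (arriving at $-\Phi(k+1,k+1,k+1,k+1,\mathbb{C}^{2k})$ and $-\Phi(k+1,k+1,k+1,k+2,\mathbb{C}^{2k+1})$) whereas the paper negates a single factor (arriving at $-\Phi(k-1,k-1,k-1,k-1,\mathbb{C}^{2k})$ and $-\Phi(k,k,k,k-1,\mathbb{C}^{2k+1})$); both give the same strict length bound.
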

\begin{proof}
If $(k_1, k_2, k_3, k_4) \in K_{n,4}$ is such that $\SU^\m (n)=\Phi(k_1,k_2,k_3,k_4,\mathbb{C}^{n})$, then we necessarily have
 \begin{align*}
                    \dim(\Phi(k_1,k_2,\mathbb{C}^{n})) + \dim(\Phi(k_3,k_4,\mathbb{C}^{n})) &\ge \dim(\SU^\m (n))=n^2-1, \\
                    k_1 + k_2 + k_3 + k_4 &\ge \ell(\SU^\m (n)) =2n-1,\\
                    k_1+k_2+k_3+k_4  & \equiv 1 \pmod 2,
 \end{align*}
by Corollary~\ref{cor:length} and Theorem~\ref{thm:prod_C_M2-dim}. For $n=2k$, the condition $(k_1,k_2,k_3,k_4) \in K_{n,4}$ implies that
\[
4k - 1= 2n-1 \le k_1+k_2+k_3+k_4 \le k_1+3k_2\le k_1 + 3(n-k_1) = 6k - 2k_1.
\]
This shows that $k_1 \le k$. Since we also have that $4k_1 \ge k_1+k_2+k_3+k_4 \ge 2n-1 = 4k - 1$,  we obtain $k_1=k$, which in turn forces $(k_1,  k_2,  k_3,  k_4) = (k,k,k,k-1)$. For $n=2k+1$,  a similar calculation shows that $(k_1,  k_2,  k_3,  k_4) = (k+1,k,k,k)$. By Lemma~\ref{lem:suff_k2k4},  we have $\SU^\m (2k)=\Phi(k,k,k,k-1,\mathbb{C}^{2k+1})$ and $\SU^\m (2k+1)=\Phi(k+1,k,k,k,\mathbb{C}^{2k+1})$. 

If $(k_1, k_2, k_3, k_4) \in K_{n,4}$ is such that $\SU(n)=\Phi(k_1,k_2,k_3,k_4,\mathbb{C}^n)$, then we necessarily have
\begin{align*}
    \dim(\Phi(k_1,k_2,\mathbb{C}^n)) + \dim(\Phi(k_3,k_4,\mathbb{C}^{n})) &\ge \dim(\SU(n))=n^2-1, \\
    k_1 + k_2 + k_3 + k_4 &\ge \ell(\SU(n)) =2n-2, \\
    k_1+k_2+k_3+k_4 &\equiv 0 \pmod 2.
\end{align*}
For $n=2k+1$, these inequalities have two common solutions: $(k,k,k,k)$, which satisfies Lemma~\ref{lem:suff_k2k4}, and $(k+1,k,k,k-1)$, which does not. To rule out the second case, note that $A \mapsto -A$ defines a bijection between $\SU(2k+1)$ and $\SU^\m (2k+1)$, and $-\Phi(k+1,k,k,k-1,\mathbb{C}^{2k+1})=\Phi(k,k,k,k-1,\mathbb{C}^{2k+1}) \subsetneq \SU^\m (2k+1)$. Hence $\Phi(k+1,k,k,k-1,\mathbb{C}^{2k+1}) \subsetneq \SU(2k+1)$.
  For $n=2k$, these inequalities have four common solutions: $(k,k,k,k)$, $(k,k,k-1,k-1)$, $(k,k,k,k-2)$, and $(k+1,k-1,k-1,k-1)$. Lemma~\ref{lem:suff_k2k4} shows that indeed $\SU(2k) = \Phi(k,k,k,k,\mathbb{C}^{2k}) = \Phi(k,k,k-1,k-1,\mathbb{C}^{2k})$. We have already established $\SU(2k) = \Phi(k,k,k,k-2,\mathbb{C}^{2k})$ in Lemma~\ref{lem:M3111}. To rule out the last remaining case, note that $A \mapsto -A$ defines a bijection on $\SU(2k)$ itself, and $-\Phi(k+1,k-1,k-1,k-1, \mathbb{C}^{2k})=\Phi(k-1,k-1,k-1,k-1,\mathbb{C}^{2k}) \subsetneq \SU(2k)$. Hence $\Phi(k+1,k-1,k-1,k-1,\mathbb{C}^{2k}) \subsetneq \SU(2k)$. 
\end{proof}

The decomposition into four Grassmannians is the best possible. We adapt \cite[Theorem~2]{halmos_products_1958} to show that for infinitely many $n \in \mathbb{N}$, $\SU(n)$ cannot be expressed as a product of three Grassmannians.
\begin{proposition}\label{prop:M3_fail}
For any $n=3m$, $m\in \mathbb{N}$, there exists $V\in \SU(n)$ such that $V \notin \Phi(k_1,k_2,k_3,\mathbb{C}^n)$ for any $(k_1, k_2, k_3) \in K_{n,3}$.
\end{proposition}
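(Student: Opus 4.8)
The plan is to adapt the spectral obstruction behind \cite[Theorem~2]{halmos_products_1958}. There, a scalar unitary $\lambda\Id$ with $\lambda\notin\{\pm1,\pm\ii\}$ fails to be a product of three symmetries: moving one symmetry to the other side of the equation leaves a product of two symmetries, and a product of two symmetries $ST$ is unitarily similar to its inverse $(ST)^{-1}$, forcing its spectrum to be invariant under $z\mapsto\overline z$ --- a condition that $\lambda$ times a symmetry violates. I would run the analogous argument with the explicit witness $V=\omega\Id_n$, where $\omega=e^{2\pi\ii/n}$; note $\det V=\omega^n=1$, so indeed $V\in\SU(n)$.

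First I would extract the necessary condition I need from Theorem~\ref{thm:char_C_M2}: any element of $\Phi(k_1,k_2,\mathbb{C}^n)$ is unitarily similar to a block matrix $\diag(D,\overline D,I_{n-k-k',\,k-k'})$, so its multiset of eigenvalues --- equivalently, its characteristic polynomial --- is invariant under complex conjugation. Then, assuming toward a contradiction that $V=X_1X_2X_3$ with $X_i\in\Gr(k_i,\mathbb{C}^n)$ and $(k_1,k_2,k_3)\in K_{n,3}$, I would multiply on the right by $X_3$ and use $X_3^2=\Id$ to obtain $VX_3=X_1X_2\in\Phi(k_1,k_2,\mathbb{C}^n)$; hence $\omega X_3$ must have conjugation-invariant spectrum. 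But $X_3\in\Gr(k_3,\mathbb{C}^n)$ has eigenvalue $1$ with multiplicity $k_3$ and $-1$ with multiplicity $n-k_3$, where $1\le k_3\le n-1$, so $\omega X_3$ has eigenvalue $\omega$ with multiplicity $k_3$ and $-\omega$ with multiplicity $n-k_3$; this multiset is invariant under conjugation only when $\overline\omega=\omega$, or when $\overline\omega=-\omega$ and $k_3=n-k_3$. The first case forces $\omega\in\{\pm1\}$, i.e.\ $n\in\{1,2\}$; the second forces $\omega\in\{\pm\ii\}$, i.e.\ $n=4$. Since $n=3m$ is none of $1,2,4$, we reach a contradiction, and therefore $V\notin\Phi(k_1,k_2,k_3,\mathbb{C}^n)$ for every $(k_1,k_2,k_3)\in K_{n,3}$.

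There is no serious difficulty once the witness is chosen; the one point requiring care is to isolate from Theorem~\ref{thm:char_C_M2} precisely the necessary condition that $\omega X_3$ fails (conjugation-invariance of the spectrum, and nothing stronger), and to verify that the only unit-modulus scalars $\omega$ for which it can hold are $\pm1$ and $\pm\ii$, whose associated $n$ are $1,2,4$ --- in particular not multiples of $3$. I would also remark that this same witness in fact yields $\SU(n)\ne\Phi(k_1,k_2,k_3,\mathbb{C}^n)$ for \emph{every} $n\ge3$ with $n\ne4$; restricting to $n=3m$ merely packages a clean infinite family that automatically dodges the exceptional value $n=4$. As a cross-check, for $n=3m$ with $m\ge2$ one can instead argue via reflection length: pick $V$ with $\ell(V)=2n-2=\ell(\SU(n))$ and observe from Corollary~\ref{cor:length} that $\ell(\Phi(k_1,k_2,k_3,\mathbb{C}^n))\le n-k_1+k_2+k_3\le\floor{3n/2}<2n-2$ for all $(k_1,k_2,k_3)\in K_{n,3}$; this estimate collapses to an equality exactly at $n=3$, which is why the scalar argument is the one to use for a uniform statement.
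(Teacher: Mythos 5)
Your proof is correct, but it reaches the contradiction by a genuinely different route than the paper. The paper also takes a scalar witness, namely $V=e^{2\pi\ii/3}I_n$ (which lies in $\SU(n)$ precisely because $3\mid n$), and then uses nothing beyond the facts that $V$ is central and each $X_i$ is an involution: commuting $V$ past the factors in $V=X_1X_2X_3$ yields the identity $V^4=X_2X_1X_2X_2X_1X_2=I$, contradicting $V^4=e^{2\pi\ii/3}I\neq I$. Your argument instead peels off $X_3$ and appeals to Theorem~\ref{thm:char_C_M2} to conclude that the spectrum of $X_1X_2\in\Phi(k_1,k_2,\mathbb{C}^n)$ is conjugation-invariant, which forces $\omega\in\{\pm1,\pm\ii\}$. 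Both arguments ultimately exploit the same obstruction --- a central element that is a product of three unitary involutions must be a fourth root of unity times the identity --- but your spectral route buys a stronger statement: with the witness $e^{2\pi\ii/n}I_n$ the conclusion holds for every $n\ge 3$ with $n\ne 4$, not just multiples of $3$. Two minor remarks: the invocation of Theorem~\ref{thm:char_C_M2} is heavier than needed, since conjugation-invariance of $\sigma(X_1X_2)$ already follows from the elementary identity $X_2(X_1X_2)X_2^{-1}=(X_1X_2)^{-1}$ for unitary involutions; and your reflection-length cross-check is fine for $n=3m$, $m\ge 2$, but note the bound $\floor{3n/2}<2n-2$ also fails at $n=4$, not only at $n=3$, which is consistent with $n=4$ being the genuine exception.
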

\begin{proof}
Suppose $V=e^{ \frac{2\pi }{3} \ii}I_n \in \SU(n)$ can be written as $V=X_1X_2X_3$ with $X_i\in \Gr(k_i, \mathbb{C}^n)$, $i =1,2,3$. Bearing in mind that $VX_i=X_iV$ for each $i$,  as $V$ is a multiple of the identity, we have
    \begin{align*}
        V^4 &= V(X_1 V) X_2 (V X_3) = V(X_2 X_3) X_2 (X_1 X_2) \\
        &= X_2 (V X_3) X_2 (X_1 X_2)= X_2 X_1 X_2 X_2 X_1 X_2 = I,
    \end{align*}
a contradiction.
\end{proof}

\section{Product of symplectic Grassmannians}

The symplectic Grassmannian represented in the form of symplectic involutions
\begin{equation}\label{eq:GrSp}
\Gr_{\Sp}(2k, \mathbb{F}^{2n}) \coloneqq  \{ X \in \Sp(2n,\mathbb{F}): X^2 = I_{2n},\; \tr(X) = 4k - 2n \}
\end{equation}
is not well known. In fact this article likely records its first appearance. We will establish some basic results here, largely to demonstrate that our definition in  \eqref{eq:GrSp} is isomorphic to the existing definition in \cite{LL11}. These results will hold true for $\mathbb{F} = \mathbb{R}$ and $\mathbb{C}$ alike.
\begin{lemma}[Symplectic Grassmannian as adjoint orbit]\label{lem:SPidentification}
Let $k, n \in \mathbb{N}$ with $k \le n$. Then
\[
\Gr_{\Sp}(2k, \mathbb{F}^{2n}) = \{ Q \diag(I_{k,n-k},  I_{k,n-k}) Q^{-1}: Q\in \Sp(2n,\mathbb{F}) \}.
\]
\end{lemma}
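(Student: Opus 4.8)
The plan is to prove the two inclusions separately, with the non-trivial direction being that every symplectic involution with the prescribed trace is conjugate \emph{within} $\Sp(2n,\mathbb{F})$ to the standard model $\diag(I_{k,n-k},I_{k,n-k})$.

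For the easy inclusion ($\supseteq$), I would simply check that $S_0 \coloneqq \diag(I_{k,n-k},I_{k,n-k})$ lies in $\Gr_{\Sp}(2k,\mathbb{F}^{2n})$: it is visibly an involution, $\tr(S_0) = 2(k-(n-k)) = 4k-2n$, and a direct computation shows $S_0^\tp J_{2n} S_0 = J_{2n}$ because $S_0$ commutes with $J_{2n}$ (both are block structured compatibly with the $n+n$ splitting). Since $\Sp(2n,\mathbb{F})$ is a group and conjugation preserves being an involution, the trace, and the symplectic condition ($X^\tp J X = J$ is conjugation-invariant under $\Sp$), every $Q S_0 Q^{-1}$ with $Q\in\Sp(2n,\mathbb{F})$ lies in $\Gr_{\Sp}(2k,\mathbb{F}^{2n})$.

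For the hard inclusion ($\subseteq$), let $X \in \Sp(2n,\mathbb{F})$ with $X^2 = I_{2n}$ and $\tr(X) = 4k-2n$. The idea is: $X$ decomposes $\mathbb{F}^{2n} = W_+ \oplus W_-$ into its $\pm 1$ eigenspaces, and the trace condition forces $\dim W_+ = 2k$, $\dim W_- = 2n-2k$. The key structural point is that $W_+$ and $W_-$ are \emph{symplectic} subspaces and are mutually symplectically orthogonal: if $u\in W_+$, $v\in W_-$, then using $X\in\Sp$, $\omega(u,v) = \omega(Xu,Xv) = \omega(u,-v) = -\omega(u,v)$, so $\omega(u,v)=0$; hence the symplectic form is non-degenerate on each $W_\pm$ (it is non-degenerate on the orthogonal direct sum, and the two summands are mutually orthogonal). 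Now choose a symplectic basis of $W_+$ and one of $W_-$; concatenating gives a symplectic basis of $\mathbb{F}^{2n}$, i.e.\ a matrix $Q \in \Sp(2n,\mathbb{F})$ (after ordering the basis vectors so that the standard $J_{2n}$ form is recovered — one must interleave or reorder carefully, matching the $(n+n)$-block convention of $J_{2n}$) such that $Q^{-1} X Q$ acts as $+1$ on the first $2k$-dimensional standard symplectic subspace and $-1$ on its complement. With the bookkeeping arranged so the $W_+$ basis occupies coordinates $\{1,\dots,k\}\cup\{n+1,\dots,n+k\}$, this conjugate is exactly $\diag(I_{k,n-k},I_{k,n-k})$.

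The main obstacle is the index bookkeeping in the last step: "a symplectic basis of $W_+$ together with a symplectic basis of $W_-$" gives \emph{some} symplectic basis of $\mathbb{F}^{2n}$, but to land precisely on the matrix $\diag(I_{k,n-k},I_{k,n-k})$ — rather than on some permuted variant — one must order the pooled basis so that the first $n$ vectors are the "position" vectors (the first Lagrangian half) and the last $n$ are the "momentum" vectors, with the $W_+$-contributions placed in positions $1,\dots,k$ and $n+1,\dots,n+k$. I would handle this by noting that any two such orderings differ by a permutation that is itself realized by an element of $\Sp(2n,\mathbb{F})$ (a signed permutation preserving $J_{2n}$), so it can be absorbed into $Q$; alternatively one verifies directly that the block-diagonal form obtained is $\Sp$-conjugate to $\diag(I_{k,n-k},I_{k,n-k})$. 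Everything else — the eigenspace decomposition, the trace count, the symplectic orthogonality — is routine linear algebra over $\mathbb{F}$ (note $X^2=I$ splits $\mathbb{F}^{2n}$ cleanly since the minimal polynomial divides $t^2-1 = (t-1)(t+1)$, which has distinct roots, so no characteristic-$2$ or non-splitting issues arise for $\mathbb{F}=\mathbb{R},\mathbb{C}$).
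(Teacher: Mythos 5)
Your proof is correct and takes essentially the same approach as the paper: diagonalize the involution, read off eigenvalue multiplicities from the trace, and observe that the $\pm 1$ eigenspaces are symplectically orthogonal so a symplectic eigenbasis exists. The only difference is that the paper outsources the symplectic-orthogonality-of-eigenspaces step to a citation (McDuff--Salamon, Exercise~2.2.7), whereas you prove it in line via $\omega(u,v)=\omega(Xu,Xv)=-\omega(u,v)$; you also make the easy reverse inclusion explicit, which the paper leaves implicit.
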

\begin{proof}
Since each $X \in \Gr_{\Sp}(2k,\mathbb{F}^{2n})$ is an involution,  it is diagonalizable by some $Q \in \GL_n(\mathbb{F})$, and the eigenvalues of $X$ are $1$ and $-1$,  with multiplicity $2k$ and $2n-2k$ respectively, i.e.,  $X = Q \diag(I_{k,n-k},  I_{k,n-k}) Q^{-1}$. Since $X \in \Sp(2n,\mathbb{F})$, we may choose its eigenbasis to be symplectic \cite[Exercise~2.2.7]{mcduff_salamon_2017}, i.e., $Q$ may be chosen to be in $\Sp(2n,\mathbb{F})$.
\end{proof}
We will show that $\Gr_{\Sp}(2k, \mathbb{F}^{2n})$ above is isomorphic to the more common \cite{LL11} homogeneous space description $\Sp(2n, \mathbb{F})/\bigl( \Sp(2k, \mathbb{F}) \times \Sp(2n-2k, \mathbb{F}) \bigr)$ in \eqref{eq:homo}. As will be evident from the proof below, the isomorphism may be taken in any appropriate sense: diffeomorphism of smooth manifolds, biholomorphic isomorphism of complex manifolds, biregular isomorphism of algebraic varieties, etc.
\begin{proposition}[Symplectic Grassmannian as homogeneous space]\label{prop:SPidentification}
Let $k, n \in \mathbb{N}$ with $k \le n$. Then
\[
\Gr_{\Sp}(2k, \mathbb{F}^{2n}) \cong \Sp(2n, \mathbb{F})/\bigl( \Sp(2k, \mathbb{F}) \times \Sp(2n-2k, \mathbb{F}) \bigr).
\]
\end{proposition}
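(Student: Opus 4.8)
\textit{Plan of proof.}
The plan is to realize $\Gr_{\Sp}(2k, \mathbb{F}^{2n})$ as a single $\Sp(2n,\mathbb{F})$-orbit and then identify the isotropy group of a convenient base point. By Lemma~\ref{lem:SPidentification}, $\Gr_{\Sp}(2k, \mathbb{F}^{2n})$ is exactly the orbit $\Orb(P)$ of $P \coloneqq \diag(I_{k,n-k}, I_{k,n-k})$ under the conjugation action $Q \cdot X = Q X Q^{-1}$ of $\Sp(2n,\mathbb{F})$; taking $Q = I_{2n}$ there shows in particular $P \in \Sp(2n,\mathbb{F})$. By the orbit--stabilizer correspondence there is a natural bijection $\Sp(2n,\mathbb{F})/\Stab(P) \to \Gr_{\Sp}(2k, \mathbb{F}^{2n})$, so everything reduces to showing that $\Stab(P) \cong \Sp(2k,\mathbb{F}) \times \Sp(2n-2k,\mathbb{F})$, embedded in $\Sp(2n,\mathbb{F})$ as the subgroup in \eqref{eq:homo} (up to conjugation).

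To pin down $\Stab(P) = \{ Q \in \Sp(2n,\mathbb{F}) : QP = PQ \}$, I would write $Q$ in $n \times n$ blocks as $Q = \begin{bsmallmatrix} A & B \\ C & D \end{bsmallmatrix}$. The relation $QP = PQ$ forces each of $A, B, C, D$ to commute with $I_{k,n-k} = \diag(I_k, -I_{n-k})$; since this matrix has eigenvalues $\pm 1$ with eigenspaces $\mathbb{F}^k$ and $\mathbb{F}^{n-k}$, its centralizer is the set of block-diagonal matrices $\diag(\,\cdot\,,\,\cdot\,)$ with blocks of sizes $k$ and $n-k$. Hence $A = \diag(A_1, A_2)$, $B = \diag(B_1, B_2)$, $C = \diag(C_1, C_2)$, $D = \diag(D_1, D_2)$. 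Substituting these into the defining relations $A^\tp C = C^\tp A$, $B^\tp D = D^\tp B$, $A^\tp D - C^\tp B = I_n$ of $\Sp(2n,\mathbb{F})$ (coming from $Q^\tp J_{2n} Q = J_{2n}$), and using that products of matrices block-diagonal in this common pattern remain so, the three relations split into an independent set of three relations in $(A_1, B_1, C_1, D_1)$ and another in $(A_2, B_2, C_2, D_2)$; these are precisely the conditions for $\begin{bsmallmatrix} A_1 & B_1 \\ C_1 & D_1 \end{bsmallmatrix} \in \Sp(2k, \mathbb{F})$ relative to $J_{2k}$ and $\begin{bsmallmatrix} A_2 & B_2 \\ C_2 & D_2 \end{bsmallmatrix} \in \Sp(2n-2k, \mathbb{F})$ relative to $J_{2n-2k}$. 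I would then verify that $Q \mapsto \bigl( \begin{bsmallmatrix} A_1 & B_1 \\ C_1 & D_1 \end{bsmallmatrix}, \begin{bsmallmatrix} A_2 & B_2 \\ C_2 & D_2 \end{bsmallmatrix} \bigr)$ is a group isomorphism from $\Stab(P)$ onto $\Sp(2k, \mathbb{F}) \times \Sp(2n-2k, \mathbb{F})$: it is bijective by the preceding discussion and multiplicative by the block-multiplication formula. Equivalently, $\Stab(P)$ is the stabilizer of the splitting of $\mathbb{F}^{2n}$ into the two mutually $J_{2n}$-orthogonal symplectic coordinate subspaces $\spn\{e_1,\dots,e_k,f_1,\dots,f_k\}$ and $\spn\{e_{k+1},\dots,e_n,f_{k+1},\dots,f_n\}$ of dimensions $2k$ and $2n-2k$, and $\Sp(2n,\mathbb{F})$ acts transitively on such splittings.

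Finally, for the assertion that the isomorphism is valid in any of the stated categories, I would note that the orbit map $Q \mapsto QPQ^{-1}$ is a morphism of affine varieties (respectively a smooth map of real Lie groups, respectively holomorphic) which descends through the closed subgroup $\Stab(P)$; since $\Gr_{\Sp}(2k, \mathbb{F}^{2n})$ is cut out by the polynomial equations of \eqref{eq:GrSp} it is a closed subvariety (respectively an embedded submanifold), and hence the induced bijection $\Sp(2n,\mathbb{F})/\Stab(P) \to \Gr_{\Sp}(2k, \mathbb{F}^{2n})$ is an isomorphism in the corresponding category by the standard theory of homogeneous spaces.

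The only genuinely delicate step is the bookkeeping in the second paragraph: one must check carefully that the three symplectic relations decouple cleanly under the block-diagonal structure and that the two resulting $2 \times 2$ block systems are exactly the symplectic conditions relative to $J_{2k}$ and $J_{2n-2k}$, which ultimately rests on the elementary observation that the standard symplectic form on $\mathbb{F}^{2n}$ restricts to the standard symplectic forms on the coordinate subspaces $\spn\{e_1,\dots,e_k,f_1,\dots,f_k\}$ and $\spn\{e_{k+1},\dots,e_n,f_{k+1},\dots,f_n\}$. Everything else is routine.
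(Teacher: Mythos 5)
Your proposal is correct and takes essentially the same approach as the paper: both realize $\Gr_{\Sp}(2k,\mathbb{F}^{2n})$ as the conjugation orbit of $\diag(I_{k,n-k},I_{k,n-k})$ via Lemma~\ref{lem:SPidentification}, then partition $Q$ into $n\times n$ blocks, use $QD=DQ$ to force each block to commute with $I_{k,n-k}$ and hence be block diagonal, and finally observe that the three symplectic relations split into the $\Sp(2k,\mathbb{F})$ and $\Sp(2n-2k,\mathbb{F})$ conditions. Your remarks on the group isomorphism being multiplicative and on the validity of the isomorphism in the smooth/holomorphic/algebraic categories make explicit what the paper leaves implicit, but do not change the substance of the argument.
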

\begin{proof}
Let $D \coloneqq \diag(I_{k,n-k},  I_{k,n-k})$.  The map
\[
\varphi: \Sp(2n,\mathbb{F}) \to \Gr_{\Sp}(2k,\mathbb{F}^{2n}),\quad \varphi(Q) = Q D Q^{-1},
\]
is surjective by Lemma~\ref{lem:SPidentification}, i.e., the adjoint action of $\Sp(2n,\mathbb{F})$ on $\Gr_{\Sp}(2k, \mathbb{F}^{2n})$ is transitive.  The required isomorphism would follow immediately if we can show that $\Stab_D(\Sp(2n,\mathbb{F})) \cong \Sp(2k,\mathbb{F}) \times \Sp(2n-2k,\mathbb{F})$.  We partition $Q = \begin{bsmallmatrix}
Q_1 & Q_2 \\
Q_3 & Q_4
\end{bsmallmatrix} \in \Stab_D(\Sp(2n,\mathbb{F}))$ into $n\times n$ blocks $Q_1,Q_2,  Q_3$ and $Q_4$.  Since $QD = DQ$,  we have $Q_i I_{k,n-k} = I_{k,n-k} Q_i$, $i=1,2,3, 4$.  We further partition each $Q_i$ into $Q_i = \begin{bsmallmatrix} X_i & Y_i \\ Z_i & W_i \end{bsmallmatrix}$ with $X_i \in \mathbb{F}^{k \times k}$, $Y_i \in \mathbb{F}^{k \times (n-k)}$,  $Z_i \in \mathbb{F}^{(n-k) \times k}$, $W_i \in \mathbb{F}^{(n-k) \times (n-k)}$, and from $Q_i I_{k,n-k} = I_{k,n-k} Q_i$ we immediately see that $Y_i = 0$ and $Z_i = 0$.  Since $Q^\tp J_{2n}Q = J_{2n}$, we have $Q_1^\tp Q_3 - Q_3^\tp Q_1 = Q_2^\tp Q_4 - Q_4^\tp Q_2 = 0$ and $Q_1^\tp Q_4 - Q_3^\tp Q_2 = I$.  Consequently, we obtain
\begin{align*}
X_1^\tp X_3 &= X_3^\tp X_1, & X_2^\tp X_4 &= X_4^\tp X_2, & X_1^\tp X_4 - X_3^\tp X_2 &= I_k; \\
W_1^\tp W_3 &= W_3^\tp W_1, \quad & W_2^\tp W_4 &= W_4^\tp W_2, \quad & W_1^\tp W_4 - W_3^\tp W_2 &= I_{n-k}.
\end{align*}
The equations involving $X_i$'s are equivalent to $\begin{bsmallmatrix} 
X_1 & X_2 \\ 
X_3 & X_4 
\end{bsmallmatrix} \in \Sp(2k,\mathbb{F})$ and those involving $W_i$'s are equivalent to $\begin{bsmallmatrix} 
W_1 & W_2 \\ 
W_3 & W_4 
\end{bsmallmatrix} \in \Sp(2n-2k,\mathbb{F})$.  Hence $\Stab_D(\Sp(2n,\mathbb{F})) \cong \Sp(2k,\mathbb{F}) \times \Sp(2n-2k,\mathbb{F})$.
\end{proof}

Finally, we will show that set theoretically, $\Gr_{\Sp}(2k, \mathbb{F}^{2n})$ indeed agrees with $\mathbb{G}_{\Sp}(2k, \mathbb{F}^{2n})$ in \eqref{eq:abstract}, i.e., matrices in $\Gr_{\Sp}(2k, \mathbb{F}^{2n})$ are in bijective correspondence with symplectic subspaces in $\mathbb{G}_{\Sp}(2k, \mathbb{F}^{2n})$.
\begin{corollary} [Symplectic Grassmannian as matrix manifold]\label{cor:SPidentification1}
Let $k, n \in \mathbb{N}$ with $k \le n$. Then
\begin{equation}\label{eq:set}
\Gr_{\Sp}(2k, \mathbb{F}^{2n}) \cong \mathbb{G}_{\Sp}(2k, \mathbb{F}^{2n}).
\end{equation}
\end{corollary}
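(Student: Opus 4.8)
The plan is to exhibit an explicit bijection between the two sets by sending an involution to its $(+1)$-eigenspace, and then to upgrade it to an isomorphism in the structured sense using the homogeneous-space descriptions already established. Write $\omega(u,v) = u^\tp J_{2n} v$ for the symplectic form on $\mathbb{F}^{2n}$ and, for a subspace $V$, let $V^\omega = \{u : \omega(u,v) = 0 \text{ for all } v \in V\}$ denote its symplectic complement. Define
\[
\Psi\colon \Gr_{\Sp}(2k,\mathbb{F}^{2n}) \to \mathbb{G}_{\Sp}(2k,\mathbb{F}^{2n}), \qquad \Psi(X) = \ker(X - I_{2n}).
\]
First I would note that since $X^2 = I_{2n}$ and $\mathbb{F}$ has characteristic zero, $X$ is diagonalizable with spectrum in $\{1,-1\}$, so $\mathbb{F}^{2n} = V_\p \oplus V_\m$ with $V_\p = \ker(X - I_{2n})$, $V_\m = \ker(X + I_{2n})$; combining $\dim_\mathbb{F} V_\p + \dim_\mathbb{F} V_\m = 2n$ with $\tr(X) = \dim_\mathbb{F} V_\p - \dim_\mathbb{F} V_\m = 4k - 2n$ forces $\dim_\mathbb{F} V_\p = 2k$.

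Next I would check that $V_\p$ is symplectic, which is the one genuinely geometric step. Since $X \in \Sp(2n,\mathbb{F})$ we have $\omega(Xu,Xv) = \omega(u,v)$; taking $u \in V_\p$ and $v \in V_\m$ gives $\omega(u,v) = \omega(Xu,Xv) = \omega(u,-v) = -\omega(u,v)$, so $\omega(u,v) = 0$, i.e.\ $V_\p$ and $V_\m$ are $\omega$-orthogonal. As $\omega$ is nondegenerate on $\mathbb{F}^{2n} = V_\p \oplus V_\m$ and the summands are $\omega$-orthogonal, $\omega$ restricts nondegenerately to each of them: if $u \in V_\p$ pairs trivially with all of $V_\p$, it pairs trivially with all of $\mathbb{F}^{2n}$ and hence vanishes. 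Thus $\Psi(X) = V_\p \in \mathbb{G}_{\Sp}(2k,\mathbb{F}^{2n})$, and along the way one also sees $V_\m = V_\p^\omega$ by the dimension count $\dim_\mathbb{F} V_\p^\omega = 2n - 2k$.

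For the inverse, given a $2k$-dimensional symplectic subspace $V$, nondegeneracy of $\omega|_V$ yields $V \cap V^\omega = 0$, and $\dim_\mathbb{F} V + \dim_\mathbb{F} V^\omega = 2n$ gives $\mathbb{F}^{2n} = V \oplus V^\omega$; let $X_V$ act as $+I$ on $V$ and $-I$ on $V^\omega$. Then $X_V^2 = I_{2n}$ and $\tr(X_V) = 2k - (2n - 2k) = 4k - 2n$, and expanding $\omega(X_V u, X_V v)$ in the decomposition $\mathbb{F}^{2n} = V \oplus V^\omega$ shows the two cross terms vanish because $\omega(V, V^\omega) = 0$, so $\omega(X_V u, X_V v) = \omega(u,v)$ and $X_V \in \Gr_{\Sp}(2k,\mathbb{F}^{2n})$. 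The maps $X \mapsto V_\p(X)$ and $V \mapsto X_V$ are mutually inverse: $X_{V_\p(X)} = X$ since $X$ already acts as $+I$ on $V_\p$ and $-I$ on $V_\m = V_\p^\omega$, and $V_\p(X_V) = V$ by construction. This proves \eqref{eq:set} as a bijection of sets. To obtain \eqref{eq:set} as an isomorphism of smooth manifolds (respectively complex manifolds or algebraic varieties), I would observe that $\Psi$ is $\Sp(2n,\mathbb{F})$-equivariant — conjugation on the source, the tautological action on subspaces on the target — that both actions are transitive (on the source by Lemma~\ref{lem:SPidentification}, on the target classically), and that the stabilizers coincide by Proposition~\ref{prop:SPidentification}, so $\Psi$ descends to an isomorphism of homogeneous spaces in the relevant category. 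I expect no real obstacle here: the only point demanding care is the $\omega$-orthogonality argument of the third paragraph, the rest being bookkeeping with dimensions and direct-sum decompositions.
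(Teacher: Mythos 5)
Your proof is correct, and while the resulting bijection is the same as the paper's (both send $X$ to its $(+1)$-eigenspace), your route to it is genuinely different and more self-contained. The paper defines $\mathbb{V}_X$ by choosing a symplectic eigenbasis $Q \in \Sp(2n,\mathbb{F})$ via Lemma~\ref{lem:SPidentification} (which in turn cites McDuff--Salamon for the existence of such a $Q$), then must verify that $\mathbb{V}_X$ does not depend on the choice of $Q$ by analyzing the stabilizer, as in Proposition~\ref{prop:SPidentification}. You instead work intrinsically with $\ker(X - I_{2n})$, which is manifestly well-defined with no basis to choose, and you prove directly --- via the $\omega$-orthogonality computation $\omega(u,v) = \omega(Xu,Xv) = -\omega(u,v)$ for $u \in V_\p$, $v \in V_\m$ --- that the $\pm 1$-eigenspaces are $\omega$-complementary symplectic subspaces. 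This is precisely the linear-algebra fact that Lemma~\ref{lem:SPidentification} outsources to a textbook exercise, so in effect you are re-proving the ingredient rather than quoting it; the payoff is that your construction of the inverse $V \mapsto X_V$ and the verification $X_{V_\p(X)} = X$ come for free from the direct-sum decomposition, whereas the paper checks injectivity and surjectivity separately via eigenbasis matrices. Your closing paragraph about upgrading the set bijection to an isomorphism in the structured category is a sensible addition, though the corollary as stated in the paper is only asserted set-theoretically; for that extra step you do, appropriately, fall back on Lemma~\ref{lem:SPidentification} and Proposition~\ref{prop:SPidentification} for transitivity and stabilizer identification.
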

\begin{proof}
Let $X\in \Sp(2n,\mathbb{F})$. Then $X = Q \diag(I_{k,n-k},  I_{k,n-k}) Q^{-1}$ as in  Lemma~\ref{lem:SPidentification}. We write $q_1,\dots,  q_{2n} \in \mathbb{F}^{2n}$ for  the column vectors of $Q$.  Define $\mathbb{V}_X$ to be the $2k$-dimensional subspace spanned by $q_1,\dots, q_k, q_{n+1},\dots,  q_{n+k}$. $\mathbb{V}_X$ is well-defined because,  as is evident from the proof of Proposition~\ref{prop:SPidentification},  if 
\[
Q \diag(I_{k,n-k},  I_{k,n-k}) Q^{-1} = Q' \diag(I_{k,n-k},  I_{k,n-k}) {Q'}^{-1},
\]
then $Q=Q' R$ for some matrix
\begin{equation}\label{cor:SPidentification1:eq1}
R =\begin{bmatrix}
    X_1 & 0 & X_2 & 0 \\
    0 & W_1 & 0 & W_2 \\
    X_3 & 0 & X_4 & 0 \\
    0 & W_3 & 0 & W_4
\end{bmatrix}
\end{equation} 
where $\begin{bsmallmatrix}
X_1 & X_2 \\
X_3 & X_4
\end{bsmallmatrix} \in  \Sp(2k,\mathbb{F}) $ and $\begin{bsmallmatrix}
W_1 & W_2 \\
W_3 & W_4
\end{bsmallmatrix} \in  \Sp(2n - 2k,\mathbb{F})$.
So
\[
\spn \{q_1,\dots,  q_k,  q_{n+1},\dots,  q_{n+k}\} = \spn \{q'_1,\dots,  q'_k,  q'_{n+1},\dots,  q'_{n+k}\}.
\]
We will call any such $Q$ an eigenbasis matrix of  $\mathbb{V}_X$. 

Next we prove that the map given by $\varphi(X) = \mathbb{V}_X$ gives the bijection in \eqref{eq:set}.  Note that $\varphi$ is injective: If $\mathbb{V}_{X_1} = \mathbb{V}_{X_2}$,  then their eigenbasis matrices $Q_1$ and $Q_2$ must satisfy $Q_2 = Q_1 R$ for some $R$ of the form \eqref{cor:SPidentification1:eq1}.  This implies that $X_2 = Q_2 \diag(I_{k,n-k},  I_{k,n-k}) Q_2^{-1}  = Q_1 \diag(I_{k,n-k},  I_{k,n-k}) Q_1^{-1} = X_1 $. The map $\varphi$ is surjective: Every $2k$-dimensional symplectic subspace $\mathbb{V}$ of $\mathbb{F}^{2n}$ has a unique orthogonal complement $\mathbb{V}^{\perp}$ with respect to the symplectic form given by $J_{2n}$.  Let $\{q_1,\dots,  q_k,  q_{n+1},  \dots,  q_{n+k}\}$ be a symplectic basis of $\mathbb{V}$ and $\{q_{k+1},\dots,  q_{n},  q_{n+k+1},  \dots,  q_{2n}\}$ a symplectic basis of $\mathbb{V}^{\perp}$. Let $Q$ be the matrix whose columns are $q_1,\dots,  q_{2n}$.  Then $Q \in \Sp(2n,\mathbb{F})$ and $\varphi(Q \diag(I_{k,n-k},  I_{k,n-k}) Q^{-1}) = \mathbb{V}_Q$.
\end{proof}

There is a well-known sequence of inclusions \cite[Proposition~11]{LL11} relating the complex, symplectic, and real Grassmannians:
\[
\frac{\U(n)}{\U(k) \times \U(n-k)} \subseteq \frac{\Sp(2n,\mathbb{R})}{\Sp(2k,\mathbb{R}) \times \Sp(2n-2k,\mathbb{R})} \subseteq \frac{\O(2n)}{\O(2k) \times \O(2n-2k)}.
\]
We will show this holds naturally with our involution models in \eqref{eq:invo} too.
\begin{corollary}\label{cor:SPidentification2}
Let $k, n \in \mathbb{N}$ with $k \le n$. Then we have
\[
\Gr(k, \mathbb{C}^n)  \xhookrightarrow{\psi_1} \Gr_{\Sp}(2k,\mathbb{R}^{2n}) \xhookrightarrow{\psi_2} \Gr(2k, \mathbb{R}^{2n})
\]
given by
\begin{align*}
\psi_1: \Gr(k, \mathbb{C}^{n})  &\to \Gr_{\Sp}(2k, \mathbb{R}^{2n}),\quad &\psi_1(A + \ii B) &= \begin{bsmallmatrix}
A & B \\
-B & A
\end{bsmallmatrix},  \\
\psi_2: \Gr_{\Sp}(2k, \mathbb{R}^{2n}) &\to \Gr(2k, \mathbb{R}^{2n}),\quad &\psi_2(X) &= 2 V_X V_X^\tp - I_{2n},
\end{align*}
where $A,  B\in \mathbb{R}^{n\times n}$ and $V_X \in \mathbb{R}^{2n \times 2k}$ is an orthonormal matrix whose column vectors form an orthonormal basis of $\mathbb{V}_X$.
\end{corollary}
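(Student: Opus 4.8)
\textit{Proof proposal.} The plan is to treat the two injections separately; in both cases smoothness and regularity (as smooth manifolds, as real algebraic varieties, etc.) are visible directly from the explicit formulas, so the only content is well-definedness and injectivity.

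For $\psi_1$, I would first record that the realification $M = A + \ii B \mapsto \begin{bsmallmatrix} A & B \\ -B & A\end{bsmallmatrix}$ is a unital $\mathbb{R}$-algebra homomorphism $\mathbb{C}^{n\times n}\to\mathbb{R}^{2n\times 2n}$ satisfying $\psi_1(M^\h) = \psi_1(M)^\tp$, with $\psi_1(I_n) = I_{2n}$ and, crucially, $\psi_1(\ii I_n) = J_{2n}$. Granting this, for $X = A + \ii B \in \Gr(k,\mathbb{C}^n)$ each requirement is a one-line check: $\psi_1(X)^2 = \psi_1(X^2) = I_{2n}$; $\psi_1(X)^\tp\psi_1(X) = \psi_1(X^\h X) = I_{2n}$, so $\psi_1(X)\in\O(2n)$; $\psi_1(X)^\tp J_{2n}\psi_1(X) = \psi_1\bigl(X^\h(\ii I_n)X\bigr) = \psi_1(\ii I_n) = J_{2n}$, so $\psi_1(X)\in\Sp(2n,\mathbb{R})$; and, since an involution in $\U(n)$ has real spectrum, $\tr(X) = 2k-n$ gives $\tr\psi_1(X) = 2\tr(A) = 2\operatorname{Re}\tr(X) = 4k-2n$. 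Hence $\psi_1(X)\in\Gr_{\Sp}(2k,\mathbb{R}^{2n})$ by \eqref{eq:GrSp}, and $\psi_1$ is injective because $A$ and $B$ are read off from the blocks of $\psi_1(X)$.

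For $\psi_2$, the first point is that $\mathbb{V}_X$ is intrinsic: it is the $(+1)$-eigenspace $\ker(X - I_{2n})$ of the involution $X$, which has dimension $2k$ by Lemma~\ref{lem:SPidentification}, and it coincides with the subspace $\mathbb{V}_X$ of Corollary~\ref{cor:SPidentification1}. Well-definedness then follows because any two orthonormal bases of $\mathbb{V}_X$ are related by right multiplication by some $O\in\O(2k)$, and $(V_X O)(V_X O)^\tp = V_X V_X^\tp$; this matrix is the Euclidean orthogonal projector $P_X$ onto $\mathbb{V}_X$, so $P_X = P_X^\tp = P_X^2$ and $\tr P_X = \tr(V_X^\tp V_X) = 2k$. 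Consequently $\psi_2(X) = 2P_X - I_{2n}$ is symmetric with $\psi_2(X)^2 = 4P_X^2 - 4P_X + I_{2n} = I_{2n}$ and $\tr\psi_2(X) = 4k - 2n$, so $\psi_2(X)\in\Gr(2k,\mathbb{R}^{2n})$ by \eqref{eq:equiv}. For injectivity, $\psi_2(X) = \psi_2(X')$ forces $\mathbb{V}_X = \operatorname{im} P_X = \operatorname{im} P_{X'} = \mathbb{V}_{X'}$, and since $X\mapsto\mathbb{V}_X$ is a bijection by Corollary~\ref{cor:SPidentification1}, we conclude $X = X'$.

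I do not expect a genuine obstacle here. The only steps needing a little care are the bookkeeping ones: observing $J_{2n} = \psi_1(\ii I_n)$, so that the symplectic constraint on $\psi_1(X)$ is a formal consequence of the homomorphism property rather than a separate computation; and matching the subspace used to define $\psi_2$ with the one in Corollary~\ref{cor:SPidentification1}, so that injectivity of $\psi_2$ is inherited from that bijection. If one additionally wants the two embeddings to be equivariant, thereby recovering the inclusion chain $\U(n)\hookrightarrow\Sp(2n,\mathbb{R})\hookrightarrow\O(2n)$ of \cite{LL11}, this falls out of the same homomorphism identities applied to conjugating matrices.
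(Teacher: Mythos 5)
Your proposal is correct and follows essentially the same route as the paper's (much terser) proof: $\psi_1$ is handled via the standard realification underlying the inclusion $\U(n)\subseteq\Sp(2n,\mathbb{R})$, and $\psi_2$ via the basis-independence of the projector $V_XV_X^\tp$ together with the bijection $X\mapsto\mathbb{V}_X$ from Corollary~\ref{cor:SPidentification1}. Your identification of $\mathbb{V}_X$ with the $(+1)$-eigenspace of $X$ agrees with the paper's definition, and the explicit checks of the defining conditions in \eqref{eq:GrSp} and \eqref{eq:equiv} are exactly the details the paper leaves implicit.
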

\begin{proof}
We just need to verify that $\psi_1$ and $\psi_2$ are well-defined and injective. For $\psi_1$, these follow from the inclusion $\U(n) \subseteq \Sp(2n,\mathbb{R})$.  The symplectic subspace $\mathbb{V}_X$ is as defined in the proof of Corollary~\ref{cor:SPidentification1}. Clearly, the matrix $2 V_X V_X^\tp - I_{2n}$ does not depend on the choice of orthonormal basis $V_X$ of $\mathbb{V}_X$. Thus $\psi_2$ is well-defined.  That $\psi_2$ is injective follows from the observation that the involution $2 V_X V_X^\tp - I_{2n}$ is uniquely determined by $\mathbb{V}_X$.
\end{proof}

Now that we have established the legitimacy of our involution model \eqref{eq:GrSp} of the symplectic Grassmannian, we are ready to prove our result. We begin by showing that generically and up to a sign, every symplectic matrix is a product of two points on the symplectic Grassmannian.
\begin{lemma}\label{lem:generic}
If $X \in \Sp(2n,\mathbb{F})$ is generic,  then  $X = \pm Y_1 Y_2$ for some $Y_1, Y_2 \in \Gr_{\Sp}(2\lfloor n/2 \rfloor, \mathbb{F}^{2n})$.
\end{lemma}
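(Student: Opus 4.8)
The plan is to reduce $X$ to a block‑diagonal symplectic normal form by conjugation, construct a single reversing symplectic involution block by block, and then read off $Y_1$ and $Y_2$. First I would record the symplectic analogue of Lemma~\ref{lem:conditions}: if $Y\in\Sp(2n,\mathbb{F})$ satisfies $Y^2=I$ and $YXY=X^{-1}$, then $(XY)^2=X(YXY)=XX^{-1}=I$, so $XY$ is again a symplectic involution and $X=(XY)Y$; conversely $X=Y_1Y_2$ with $Y_1^2=Y_2^2=I$ forces $Y_1XY_1=Y_2Y_1=X^{-1}$. Moreover each of $Y^2=I$, $YXY=X^{-1}$, $\tr(Y)=\tr(XY)=4\floor{n/2}-2n$ is stable under simultaneous conjugation $(X,Y)\mapsto(QXQ^{-1},QYQ^{-1})$ by $Q\in\Sp(2n,\mathbb{F})$, so I may first put $X$ into any convenient symplectic normal form, and it then suffices to produce one reversing symplectic involution $Y$ of the prescribed trace. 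The global sign in $X=\pm Y_1Y_2$ is harmless here and in fact useful: from $X=(XY)Y$ and $-X=(-XY)Y$ we see that passing to $-X$ flips the trace of exactly one of the two factors, which enlarges the set of trace configurations that can be realized.

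For the normal form I would use that the spectrum of a symplectic matrix is invariant under $\lambda\mapsto\lambda^{-1}$, with $E_\lambda$ and $E_{\lambda^{-1}}$ isotropic and mutually dual for $\lambda\ne\pm1$ while $E_{\pm1}$ are symplectic subspaces. For generic $X$ this produces a symplectic basis in which $X$ is block diagonal with $2\times2$ hyperbolic blocks $\begin{bsmallmatrix}\lambda&0\\0&\lambda^{-1}\end{bsmallmatrix}$ over $\mathbb{C}$, and over $\mathbb{R}$ with $2\times2$ hyperbolic blocks, $2\times2$ elliptic (rotation‑type) blocks, and $4\times4$ loxodromic blocks carrying conjugate quadruples $\{\lambda,\overline\lambda,\lambda^{-1},\overline\lambda^{-1}\}$. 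The decisive structural fact is that a symplectic involution necessarily has both its $(\pm1)$‑eigenspaces symplectic, hence of even dimension; consequently $Y$ cannot reverse $X$ inside a single primitive block, and I would instead \emph{pair} the primitive blocks of $X$ into $4\times4$ units over $\mathbb{C}$ (respectively $8\times8$ over $\mathbb{R}$) and construct $Y$ on each paired unit. When the number of primitive blocks is odd there is one leftover block, which must be attached to an unmatched part of the $(-1)$‑eigenspace; this asymmetry is exactly why the common index of the two Grassmannians is $\floor{n/2}$ rather than $n/2$, and it is what can force a nontrivial global sign.

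On a paired unit I would look for $Y$ in anti‑block form $\begin{bsmallmatrix}0&P\\Q&0\end{bsmallmatrix}$ exchanging the two halves; the requirements $Y^2=I$, $Y\in\Sp$, $YXY=X^{-1}$ then collapse to a small, explicitly manageable system for $P,Q$ — an intertwining (Sylvester‑type) relation together with a skew/symmetry constraint inherited from $J$ — and the leftover block is handled directly. Tallying the contributions of all paired units and of the leftover block to $\tr(Y)$ and to $\tr(XY)$, and using the freedom both in how the $\pm1$‑multiplicities are distributed among the units and in the global sign, one then pins both traces to the common value $4\floor{n/2}-2n$, so that $Y_1=\pm XY$ and $Y_2=\pm Y$ lie in $\Gr_{\Sp}(2\floor{n/2},\mathbb{F}^{2n})$. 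The step I expect to be the main obstacle is this pairing‑plus‑bookkeeping: because the eigenspaces of a symplectic involution are rigidly forced to be symplectic, $X$ cannot be reversed block by block, so one must match blocks compatibly and simultaneously steer both traces onto the single allowed value $4\floor{n/2}-2n$ — and it is precisely here that the floor and the ambiguous sign become unavoidable.
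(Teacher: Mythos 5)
Your proposal takes a genuinely different route from the paper's. You head straight for a \emph{global} reversing symplectic involution $Y$ with $YXY=X^{-1}$, $Y^2=I$, and the right trace, then set $Y_1=XY$, $Y_2=Y$; you deal with the even-dimensionality constraint on eigenspaces of symplectic involutions by pairing primitive $2\times 2$ normal-form blocks into larger units and steering the two traces by bookkeeping and the global $\pm$. The paper does something shorter and more modular: after conjugating to $X=\diag(D,D^{-1})$, it splits $X$ \emph{multiplicatively} into commuting factors $\diag(D_j,D_j^{-1})$ each supported on a $4$- or $6$-dimensional symplectic subspace $\mathbb{V}_j$, thereby reducing the whole problem to $n=2,3$; there it invokes de~la~Cruz and Awa--de~la~Cruz for ``every symplectic matrix similar to its inverse is a product of two symplectic involutions,'' and the trace constraint comes for free because $\Gr_{\Sp}(0,\mathbb{F}^4)=\{-I_4\}$, $\Gr_{\Sp}(4,\mathbb{F}^4)=\{I_4\}$, and $\Gr_{\Sp}(4,\mathbb{F}^6)=-\Gr_{\Sp}(2,\mathbb{F}^6)$, so generically the only possibility is trace $0$ (resp.\ $\pm 2$). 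In effect, your ``pairing-plus-bookkeeping'' step --- which you flag as the main obstacle --- is exactly what the paper sidesteps by reducing to $n\in\{2,3\}$ and outsourcing the existence of reversing involutions to the cited results. Your structural observations are all sound (the even-dimensionality of the $\pm1$-eigenspaces of a symplectic involution, the impossibility of reversing inside a lone $2\times2$ block, the role of the floor and the global sign when $n$ is odd), and the anti-block-diagonal ansatz would work; but note that over $\mathbb{R}$ you do not actually need $8\times 8$ units --- a single $4\times 4$ loxodromic block can be reversed inside itself, so the minimal paired units are $4\times 4$ (two $2\times 2$ blocks or one loxodromic block), just as over $\mathbb{C}$. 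As written, your proposal is a correct strategy in outline but leaves the decisive construction and trace-tally unfinished; the paper's reduction buys a complete proof in a few lines at the cost of citing prior work.
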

\begin{proof}
We may assume without loss of generality that $X$ is diagonal. Since $X \in \Sp(2n,\mathbb{F})$, we must have $X = \diag(D,  D^{-1})$ for some $D  = \diag(d_1, \dots,  d_n) \in \mathbb{C}^{n\times n}$. Write $k \coloneqq \lfloor n/2 \rfloor$.  Then
\[
\begin{bmatrix}
D & 0 \\
0 & D^{-1}
\end{bmatrix} = \prod_{j=1}^k \begin{bmatrix}
D_j & 0 \\
0 & D_j^{-1}
\end{bmatrix} 
\]
where $D_j = \diag(\underbrace{1,\dots, 1}_{2j-2},  d_{2j-1},  d_{2j}, \underbrace{1, \dots, 1}_{2k - 2j})$, $j=1,\dots, k-1$, and 
\[
D_k = \begin{cases}
\diag(1,\dots, 1,  d_{2k-1},  d_{2k}) &\text{if } n = 2k, \\
\diag(1,\dots, 1,  d_{2k-1},  d_{2k},d_{2k+1}) &\text{if } n = 2k+1,
\end{cases}
\]
i.e., the first $2k-2$ diagonal entries of $D_k$ are ones.
Now for each $\diag(D_j,  D_j^{-1})$, there is a subspace $\mathbb{V}_j \subseteq \mathbb{F}^{2n}$ such that $\diag(D_j,  D_j^{-1})$ is symplectic when restricted to $\mathbb{V}_j$.  Moreover,  we may take $\mathbb{V}_j$ so that $\dim_{\mathbb{F}} \mathbb{V}_j = 4$ or $6$ by construction.  Hence  it suffices to prove the required result for $n = 2$ and $3$.  Since $X$ is clearly similar to $X^{-1}$,  $X$ is a product of two symplectic involutions by \cite[Theorem~8]{dR15} and \cite[Theorem~2]{Ad20}.  For $n = 2$,  we have $-\Gr_{\Sp}(0,\mathbb{F}^4) = \Gr_{\Sp}(4,\mathbb{F}^4) = \{I_{4}\}$.  Hence $X = Y_1 Y_2$ for some $Y_1,  Y_2 \in \Gr_{\Sp}(2,\mathbb{F}^4)$ unless  $X^2=I_4$.   
For $n = 3$,  we have $-\Gr_{\Sp}(0,\mathbb{F}^6) = \Gr_{\Sp}(6,\mathbb{F}^6) = \{ I_6 \}$; and we also have $-\Gr_{\Sp}(2,\mathbb{F}^6) =\Gr_{\Sp}(4,\mathbb{F}^6)$. Hence $X = \pm Y_1 Y_2$ for some $Y_1,Y_2 \in \Gr_{\Sp}(2,\mathbb{F}^6)$ 
unless $X^2 = I_6$.  Since $X$ is generic, none of these pathological cases apply and we must have $X = \pm Y_1 Y_2$ for some $Y_1,  Y_2 \in \Gr_{\Sp}(2\lfloor n/2 \rfloor, \mathbb{F}^{2n})$ whenever $n = 2$ or  $3$.
\end{proof}

We recall an almost self-evident observation about a topological group.
\begin{lemma}\label{lem:generation}
Let $G$ be a topological group and $U \subseteq G$ be an open dense subset.  If $U = U^{-1}$, then $G = U U$. 
\end{lemma}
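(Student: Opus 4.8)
The plan is to show that for an arbitrary $g\in G$ the open dense sets $U$ and $Ug^{-1}$ must intersect, and to read off the desired factorization from a common point. First I would observe that right translation $x\mapsto xg^{-1}$ is a homeomorphism of $G$, hence carries the open dense set $U$ to the open dense set $Ug^{-1}$ (density is preserved since $\overline{Ug^{-1}} = \overline{U}\,g^{-1} = G$). Next I would invoke the elementary fact that in \emph{any} topological space the intersection of two open dense subsets is again open and dense, and in particular nonempty; no Baire-category hypothesis is needed because only two sets are involved. Thus there exists $w\in U\cap Ug^{-1}$. From $w\in Ug^{-1}$ we get $wg\in U$, while $U=U^{-1}$ gives $w^{-1}\in U$, and therefore $g = (w^{-1})(wg)\in UU$. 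Since $g$ was arbitrary, $G\subseteq UU$, and the reverse inclusion $UU\subseteq G$ is automatic.

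The only step deserving a word of care is the density of $U\cap Ug^{-1}$: given any nonempty open $W\subseteq G$, density of $U$ makes $W\cap U$ a nonempty open set, and then density of $Ug^{-1}$ forces $(W\cap U)\cap Ug^{-1}\ne\varnothing$, so $W$ meets $U\cap Ug^{-1}$. There is no genuine obstacle here; once one notices that the hypothesis $U=U^{-1}$ lets one pass freely between $U$ and its translates, the lemma reduces to this short bookkeeping argument. (It may be worth remarking that the hypothesis $U=U^{-1}$ is exactly what is used, so that the factorization stays inside $U$ rather than inside $U\cup U^{-1}$.)
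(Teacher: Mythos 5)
Your proof is correct and is essentially the paper's argument, just phrased with a right translate $Ug^{-1}$ instead of the paper's left translate $gU^{-1}$; both hinge on the observation that two open dense subsets of a space must meet. One small side note: your closing remark slightly overstates the role of $U=U^{-1}$---the paper's variant intersects $U$ with $gU^{-1}$ (using only that inversion and translations are homeomorphisms, so $U^{-1}$ and hence $gU^{-1}$ are open dense), and then $g=xy^{-1}$ with $x\in U$, $y\in U^{-1}$ lands in $UU$ without ever invoking $U=U^{-1}$; the hypothesis is harmless but not actually needed.
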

\begin{proof}
Clearly $U^{-1}$ is an open dense subset of $G$. For each $g \in G$,  $g U^{-1}$ is also an open dense subset of $G$.  So there exists some $x \in gU^{-1} \cap U$, i.e., $x = g y$ for some $y \in U^{-1}$. Hence $g = x y^{-1} \in U U$.
\end{proof}

By combining Lemmas~\ref{lem:generic} and \ref{lem:generation}, we arrive at our required result.
\begin{theorem}[Symplectic group as product of four Grassmannians]\label{thm:GrSp}
Let $k \in \mathbb{N}$ and $2 \le n \in \mathbb{N}$.  Then 
\[
\Sp(2n,  \mathbb{F}) =
\begin{cases}
\Phi_{\Sp}(2k,2k,2k,2k,\mathbb{F}^{2n}) &\text{if } n  = 2k,\\
\Phi_{\Sp}(2k,2k,2k,2k,\mathbb{F}^{2n}) \medcup \Phi_{\Sp}(2k + 2,2k,2k,2k,\mathbb{F}^{2n}) &\text{if } n  = 2k + 1.
\end{cases}
\]
\end{theorem}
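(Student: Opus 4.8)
The plan is to reduce the statement to Lemma~\ref{lem:generic}, which already handles a generic symplectic matrix, and then to upgrade it to all of $\Sp(2n,\mathbb{F})$ by means of Lemma~\ref{lem:generation}. The only preliminary observation needed is that negation carries $\Gr_{\Sp}(2k,\mathbb{F}^{2n})$ bijectively onto $\Gr_{\Sp}(2n-2k,\mathbb{F}^{2n})$: if $X\in\Gr_{\Sp}(2k,\mathbb{F}^{2n})$, then $-X\in\Sp(2n,\mathbb{F})$, $(-X)^2=I_{2n}$, and $\tr(-X)=2n-4k=4(n-k)-2n$, so $-X\in\Gr_{\Sp}(2(n-k),\mathbb{F}^{2n})$, and symmetrically. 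Writing $k\coloneqq\lfloor n/2\rfloor$, this specializes to $-\Gr_{\Sp}(2k,\mathbb{F}^{2n})=\Gr_{\Sp}(2k,\mathbb{F}^{2n})$ when $n=2k$ and $-\Gr_{\Sp}(2k,\mathbb{F}^{2n})=\Gr_{\Sp}(2k+2,\mathbb{F}^{2n})$ when $n=2k+1$. Put $A\coloneqq\Phi_{\Sp}(2k,2k,\mathbb{F}^{2n})$; negating one factor then gives $-A=\Phi_{\Sp}(2n-2k,2k,\mathbb{F}^{2n})$, and likewise $AA=\Phi_{\Sp}(2k,2k,2k,2k,\mathbb{F}^{2n})$ and $-(AA)=\Phi_{\Sp}(2n-2k,2k,2k,2k,\mathbb{F}^{2n})$, directly from the definition \eqref{eq:prodmap}.

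Next I would apply Lemma~\ref{lem:generic} to obtain an open dense subset $G_0\subseteq\Sp(2n,\mathbb{F})$ on which every $X$ factors as $X=\pm Y_1Y_2$ with $Y_1,Y_2\in\Gr_{\Sp}(2k,\mathbb{F}^{2n})$; in particular $G_0\subseteq A\cup(-A)$. Set $U\coloneqq G_0\cap G_0^{-1}$. Since matrix inversion is a homeomorphism of $\Sp(2n,\mathbb{F})$, the set $U$ is still open and dense, it satisfies $U=U^{-1}$ by construction, and $U\subseteq A\cup(-A)$. As $\Sp(2n,\mathbb{F})$ is a topological group, Lemma~\ref{lem:generation} yields $\Sp(2n,\mathbb{F})=UU$.

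Finally I would expand the product. Since $U\subseteq A\cup(-A)$, the set $UU$ is contained in $AA\cup A(-A)\cup(-A)A\cup(-A)(-A)$; because $A(-A)=(-A)A=-(AA)$ and $(-A)(-A)=AA$, this simplifies to $UU\subseteq AA\cup\bigl(-(AA)\bigr)$. When $n=2k$ we have $2n-2k=2k$, hence $-(AA)=AA$ and $\Sp(2n,\mathbb{F})=UU\subseteq AA=\Phi_{\Sp}(2k,2k,2k,2k,\mathbb{F}^{2n})$, the reverse inclusion being trivial. When $n=2k+1$ we have $2n-2k=2k+2$, hence $\Sp(2n,\mathbb{F})=UU\subseteq\Phi_{\Sp}(2k,2k,2k,2k,\mathbb{F}^{2n})\medcup\Phi_{\Sp}(2k+2,2k,2k,2k,\mathbb{F}^{2n})$, again with the obvious reverse inclusion. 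Essentially all of the work has already been done in Lemma~\ref{lem:generic}; at this stage the only point requiring care is that the generic locus can be taken open, dense, and symmetric under inversion so that Lemma~\ref{lem:generation} genuinely applies, and noticing that the $\pm$ ambiguity in Lemma~\ref{lem:generic} is exactly what produces the two members of the union in the odd case.
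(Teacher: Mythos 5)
Your proposal is correct and follows essentially the same route as the paper: reduce to the generic statement of Lemma~\ref{lem:generic}, upgrade via Lemma~\ref{lem:generation}, and absorb the $\pm$ sign using $-\Gr_{\Sp}(2k,\mathbb{F}^{2n})=\Gr_{\Sp}(2n-2k,\mathbb{F}^{2n})$. Your explicit replacement of the generic locus by $G_0\cap G_0^{-1}$ to guarantee the hypothesis $U=U^{-1}$ of Lemma~\ref{lem:generation} is a small point of care that the paper leaves implicit, but it does not change the argument.
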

\begin{proof} 
Let $k \coloneqq \lfloor n/2 \rfloor$.  We claim that every $X \in \Sp(2n,\mathbb{F})$ can be decomposed as $X = \pm Y_1 Y_2 Y_3 Y_4$ for some $Y_1,Y_2, Y_3,  Y_4 \in \Gr_{\Sp}(2k,\mathbb{F}^{2n})$. The $\pm$ sign is inconsequential: For $n = 2k$, $Y\in  \Gr_{\Sp} (2k,\mathbb{F}^{2n})$ if and only if $-Y \in  \Gr_{\Sp} (2n - 2k,\mathbb{F}^{2n}) = \Gr_{\Sp} (2k,\mathbb{F}^{2n})$. For $n = 2k + 1$, $Y\in  \Gr_{\Sp} (2k,\mathbb{F}^{2n})$ if and only if $-Y \in  \Gr_{\Sp} (2n - 2k,\mathbb{F}^{2n}) = \Gr_{\Sp} (2k + 2,\mathbb{F}^{2n})$. It remains to prove the claim:  By Lemma~\ref{lem:generic},  there is an open dense $U \subseteq \Sp(2n,\mathbb{F})$ such that each $X \in U$ is a product of two elements in $\Gr_{\Sp}(2\lfloor n/2 \rfloor,\mathbb{F}^{2n})$ up to a sign.  By Lemma~\ref{lem:generation},  we have $UU = \Sp(2n,\mathbb{F})$.
\end{proof}

\section{Algorithms for Grassmannian decompositions}\label{sec:alg}

We will describe explicit algorithms for decomposing orthogonal and unitary matrices into products of points on the real and complex Grassmannians.
\begin{breakablealgorithm}
\caption{Decomposition of $\SO(n)$ into products of Grassmannians}
\label{alg:SO(n)}
\begin{algorithmic}[1]
\REQUIRE $Z \in \SO(n)$
\ENSURE $Z=X_1 X_2$ with $X_1, X_2 \in \Gr(\lfloor n/2 \rfloor, \mathbb{R}^{n})$
\STATE Compute real Schur decomposition
$Z = Q \diag(R_1, \dots, R_m) Q^\tp$, $Q \in \O(n)$, 
\[
R_k = \begin{cases}
\begin{bsmallmatrix}
    \cos 2\theta_k & \sin 2\theta_k \\
    -\sin 2\theta_k  & \cos 2\theta_k
\end{bsmallmatrix}   &\text{if~$k \in \{1,\dots, m-1\}$ or $k = m$ and $n$ is even}, \\
+1 &\text{if~$k=m$ and $n$ is odd}.
\end{cases}
\]
\STATE For $k =1,\dots, m$, compute
\begin{align*}
S_k &= \begin{cases}
\begin{bsmallmatrix}
    \cos\theta_k & -\sin \theta_k \\
    -\sin \theta_k & -\cos \theta_k
\end{bsmallmatrix}  &\text{if~$k \in \{1,\dots, m-1\}$ or $k = m$ and $n$ is even}, \\
-1  &\text{if~$k=m$ and $n$ is odd},
\end{cases} \\
T_k &= \begin{cases}
\begin{bsmallmatrix}
    \cos\theta_k & \sin \theta_k \\
    \sin \theta_k & -\cos \theta_k
\end{bsmallmatrix} &\text{if~$k \in \{1,\dots, m-1\}$ or $k = m$ and $n$ is even}, \\
-1  &\text{if~$k=m$ and $n$ is odd}.
\end{cases}
\end{align*}
\STATE Compute 
\[
X_1 = Q \diag(S_1,\dots,S_m)Q^\tp, \qquad X_2 = Q \diag(T_1,\dots,T_m)Q^\tp.
\]
\end{algorithmic}
\end{breakablealgorithm}
To obtain the corresponding algorithm for $\SO^\m (n)$, we simply take $R_m=-1$ and $S_m=1$ when $n$ is odd in Algorithm~\ref{alg:SO(n)}.

\begin{breakablealgorithm}
\caption{Decomposition of $\SU(n)$ into products of Grassmannians}
\label{alg:SU(n)}
\begin{algorithmic}[1]
\REQUIRE $Z \in \SU(n)$
\ENSURE $Z=X_1 X_2 X_3 X_4$ with $X_1, X_2,X_3,X_4 \in \Gr(\lfloor n/2 \rfloor, \mathbb{C}^{n})$
\STATE Compute eigenvalue decomposition $Z = Q \diag(e^{\ii\gamma_1},\dots,e^{\ii\gamma_n}) Q^\h$, $Q \in \U(n)$.
\STATE Compute $\gamma_n = -\sum_{j=1}^{n-1} \gamma_j$.
\STATE For $k=1,\dots, \lfloor n/2 \rfloor$,  compute $\alpha_k = \sum_{j=1}^{2k-1} \gamma_j$.
\STATE For $k=1,\dots, \lceil n/2 \rceil-1$,  compute $\beta_k = \sum_{j=1}^{2k} \gamma_j$.
\STATE Compute
\begin{align*}
X_1 &= \begin{cases}
Q \diag \bigl( \underbrace{K_2,\dots, K_2}_{ n/2} \bigr) Q^\h  &\text{if $n$ is even},  \\
Q \diag \bigl( \overbrace{K_2,\dots, K_2}^{ (n-1)/2},  -1\bigr) Q^\h &\text{otherwise},
\end{cases}
\\
X_2 &= \begin{cases}
Q \diag \Bigl( \begin{bsmallmatrix}
    0 & e^{-\ii\alpha_1} \\ e^{\ii\alpha_1} & 0
    \end{bsmallmatrix},  \dots,  \begin{bsmallmatrix}
    0 & e^{-\ii\alpha_{ n/2 }} \\ e^{\ii\alpha_{ n/2 }} & 0
    \end{bsmallmatrix} \Bigr) Q^\h     &\text{if $n$ is even},  \\[2ex]
    Q \diag \Bigl( \begin{bsmallmatrix}
    0 & e^{-\ii\alpha_1} \\ e^{\ii\alpha_1} & 0
    \end{bsmallmatrix},  \dots,  \begin{bsmallmatrix}
    0 & e^{-\ii\alpha_{ (n-1)/2 }} \\ e^{\ii\alpha_{ (n-1)/2}} & 0
    \end{bsmallmatrix},  -1 \Bigr) Q^\h   &\text{otherwise}, \\
\end{cases}    
    \\
X_3 &= 
\begin{cases}
Q \diag \bigl(-1,  \underbrace{K_2,\dots,  K_2}_{n/2 -1 },  1 \bigr) Q^\h   &\text{if $n$ is even},  \\
Q \diag \bigl(-1,  \overbrace{K_2,\dots,  K_2}^{ (n-1)/2 } \bigr) Q^\h   &\text{otherwise},  \\
\end{cases}  \\
X_4 &= 
\begin{cases}
Q \diag\Bigl(-1,  \begin{bsmallmatrix}
     0 & e^{-\ii\beta_1} \\ e^{\ii\beta_1} & 0
    \end{bsmallmatrix}, \dots,  \begin{bsmallmatrix}
     0 & e^{-\ii\beta_{n/2  - 1}} \\ e^{\ii\beta_{ n/2 - 1}} & 0
    \end{bsmallmatrix},  1 \Bigr) Q^\h   &\text{if $n$ is even},  \\[2ex]
Q \diag\Bigl(-1,  \begin{bsmallmatrix}
     0 & e^{-\ii\beta_1} \\ e^{\ii\beta_1} & 0
    \end{bsmallmatrix}, \dots,  \begin{bsmallmatrix}
     0 & e^{-\ii\beta_{ (n-1)/2 }} \\ e^{\ii\beta_{(n-1)/2}} & 0
    \end{bsmallmatrix} \Bigr) Q^\h  &\text{otherwise}.
    \end{cases} 
\end{align*}
\end{algorithmic}
\end{breakablealgorithm}
To obtain the corresponding algorithm for $\SU^\m (n)$, we simply take
\[
X_1 = Q \diag \bigl(\underbrace{K_2,\dots, K_2}_{(n-1)/2},  1\bigr) Q^\h
\]
when $n$ is odd, and 
\[
X_4 = Q \diag\Bigl(-1,  \begin{bsmallmatrix}
     0 & e^{-\ii\beta_1} \\ e^{\ii\beta_1} & 0
    \end{bsmallmatrix}, \dots,  \begin{bsmallmatrix}
     0 & e^{-\ii\beta_{n/2  - 1}} \\ e^{\ii\beta_{ n/2 - 1}} & 0
    \end{bsmallmatrix},  -1 \Bigr) Q^\h
\]
when $n$ is even in Algorithm~\ref{alg:SU(n)}.

We have one of our referees to thank for this section. These algorithms may be viewed as constructive proofs for the existence of the decompositions in Theorems~\ref{thm:prod-dim} and~\ref{thm:four}. Nevertheless they do not yield the dimension formulas in \eqref{eq:dim1} and Theorem~\ref{thm:prod_C_M2-dim}, which are ultimately the centerpiece of our nonconstructive approach.  We do not have a similar algorithm for the Grassmannian decomposition of symplectic matrices in Theorem~\ref{thm:GrSp} as the symplectic case relies on a very different argument.

\section{Conclusion}

In this article, we have shown that:
\begin{enumerate}[label={\upshape(\alph*)}, ref=\alph*] 
\item\label{it:SO} any $Q \in \SO(n)$ has $Q = X_1 X_2$ with $X_i \in \Gr(\lfloor n/2 \rfloor, \mathbb{R}^n)$;
\item\label{it:SU} any $Q \in \SU(n)$ has $Q = X_1 X_2 X_3 X_4$ with $X_i \in \Gr(\lfloor n/2 \rfloor, \mathbb{C}^n)$;
\item\label{it:Sp} any $Q \in \Sp(2n,\mathbb{F})$ has $Q = X_1 X_2 X_3 X_4$ with $X_i \in \Gr_{\Sp}(2\lfloor n/2 \rfloor, \mathbb{F}^{2n})$.
\end{enumerate}
We are optimistic that these hitherto unknown structures would have interesting implications and applications but they require further explorations beyond the scope of the present work and we will not speculate. Instead we will conclude our article with a discussion of one loose end and some open questions.

For \eqref{it:SO} and \eqref{it:SU} we have gone further than showing existence, and also have characterized these decompositions in the following sense: Theorem~\ref{thm:prod-dim} tells us the values of $k_1,k_2$ for which $\SO(n) = \Phi(k_1,k_2, \mathbb{R}^n)$ and likewise for $\SO^\m(n)$. Theorem~\ref{thm:four} does the same for $\SU(n)$ and $\SU^\m (n)$ with $\Phi(k_1,k_2,k_3,k_4,\mathbb{C}^n)$. 

For \eqref{it:Sp}, whether $\Sp(2n,\mathbb{F}) = \Phi_{\Sp}(k_1,k_2,k_3,k_4,\mathbb{F}^{2n})$ for values of $k_1,k_2,k_3,k_4$ other than the case treated in  Theorem~\ref{thm:GrSp} and those excluded by simple dimension counting\footnote{For example, as $\dim_{\mathbb{F}} \Gr_{\Sp}(2k,\mathbb{F}^{2n}) = 2k(2n - 2k)$ and $\dim \Sp(2n, \mathbb{F}) = 2n^2 + n$, if $k_1,k_2,k_3,k_4 \le n/8$, then $\Phi_{\Sp}(k_1,k_2,k_3,k_4,\mathbb{F}^{2n})$ has to be a proper subset of $\Sp(2n,\mathbb{F})$.} is wide open. We are also curious if this Grassmannian product structure extends to more general Lie groups, particularly $\SO(p,q)$, $\SU(p,q)$, $\SL(n)$, and $\Spin(n)$.  The compact symplectic group $\Sp(n) = \Sp(2n,\mathbb{C}) \cap \U(2n)$ deserves special highlight as one of our referees had asked about it. We pose this as an open problem:
\begin{open}\label{thm:opn}
Is there a decomposition of the compact symplectic group $\Sp(n)$ into a product of quaternionic Grassmannians?
\end{open}
Here the quaternionic Grassmannian is
\begin{align*}
\mathbb{G}(k, \mathbb{H}^n) &= \{k\text{-dimensional quaternionic subspaces in }\mathbb{H}^n \} \\
&\cong \Sp(n) /\bigl( \Sp(k) \times \Sp(n - k) \bigr).
\end{align*}
Our approaches in this article run into difficulties as $\mathbb{H}$ is a skew field. We leave these explorations and questions to future work and for interested readers.  

\subsection*{Acknowledgement} We are indebted to our two anonymous referees for their exceptionally helpful suggestions and insightful comments,  which have significantly improved our article.
\bibliographystyle{abbrv}

\end{document}